\newtheorem{theorem}{Theorem}[section]
\newtheorem{corollary}[theorem]{Corollary}
\newtheorem{proposition}[theorem]{Proposition}
\newtheorem{definition}[theorem]{Definition}
\newtheorem{remark}[theorem]{Remark}
\newtheorem{example}[theorem]{Example}
\newenvironment{proof}{{\it Proof.} }
\def\E{\mathbb{E}}
\def\cov{\mathrm{Cov}}
\def\phi{\varphi}
\newcommand {\nn}{\nonumber}
\newcommand {\noi}{\noindent}
\def\mathsf{\bf}
\def\R{\mathbb{R}}
\def\Z{\mathbb{Z}}
\def\E{\mathrm E}
\def\P{\mathrm P}
\def\text{\mbox}
\def\1{{\bf 1}}
\newcommand\beqn{\begin{displaymath}}  
\newcommand\eeqn{\end{displaymath}}
\begin{document}
\title{Projective stochastic equations and nonlinear long memory}
\title{Projective stochastic equations and nonlinear long memory
}
\author{Ieva Grublyt\.e and Donatas Surgailis}
\date{\today \\  \small Vilnius University}
\maketitle

\begin{abstract}
A projective moving average $\{ X_t, t \in \Z \}$   is a Bernoulli shift  written as
a backward martingale transform of the innovation
sequence. We introduce a new class of nonlinear stochastic equations for projective moving averages, termed projective equations,
involving a (nonlinear) kernel $Q$ and a linear combination of projections of $X_t$
on "intermediate" lagged innovation subspaces with given coefficients $\alpha_i, \beta_{i,j}$. The class of
such equations include usual moving-average processes and the Volterra series of
the LARCH model. Solvability of projective equations is studied, including a
nested Volterra series representation of the solution $X_t$. We show that under natural conditions on
$Q, \alpha_i, \beta_{i,j}$, this solution exhibits covariance and distributional long memory, with
fractional Brownian motion as the limit of the corresponding partial sums process.

{Keywords:} {projective stochastic equations; long memory; nested Volterra series; LARCH model; Bernoulli shift; invariance principle}
%
\end{abstract}

\section{Introduction}
A discrete-time second-order  stationary process  $\{X_t, t \in \Z\}$ is called  {\it long memory}
if its covariance $\gamma(k) = {\rm cov}(X_0,X_k)$ decays slowly with the lag in such a way that
its absolute series diverges: $\sum_{k=1}^\infty |\gamma(k)| = \infty $. In the converse case
when $\sum_{k=1}^\infty |\gamma(k)| < \infty $ and $\sum_{k=1}^\infty \gamma(k) \neq  0 $ the process
$\{X_t\}$ is said  {\it short memory}.  Long memory processes
have different properties from short memory (in particular, i.i.d.) processes.
Long memory processes have been found to arise in a variety of physical and social sciences. See, e.g.,
the monographs
Beran~\cite{ref2}, Doukhan et al.~\cite{ref6}, Giraitis et al.~\cite{ref11} and the references therein.

Probably, the most important model of long memory processes is the linear, or moving average process
\begin{eqnarray} \label{linear}
X_t&=&\sum_{s\le t} b_{t-s} \zeta_s, \qquad t \in \Z,
\end{eqnarray}
where $\{\zeta_s, s\in \Z\}$ is a standardized i.i.d. sequence,
and the moving average coefficients $b_j$'s decay slowly so that  $\sum_{j=0}^\infty |b_j| = \infty,\,
\sum_{j=0}^\infty b^2_j < \infty $. The last condition guarantees that the series in (\ref{linear}) converges in mean
square and satisfies $\E X_t = 0,\, \E X^2_t = \sum_{j=0}^\infty b^2_j < \infty$. In the literature it is often
assumed that the coefficients regularly decay as
\begin{equation}\label{bj}
b_j \sim \kappa j^{d-1}, \qquad j \to \infty, \qquad \exists \  \kappa >0, \  d \in (0,1/2).
\end{equation}
Condition (\ref{bj}) guarantees that
\begin{equation}\label{covk}
\gamma(k)\  = \ \sum_{j=0}^\infty b_j b_{k+j} \ \sim \  \kappa^2 B(d,1-2d) k^{1-2d}, \qquad k \to \infty
\end{equation}
and hence
$\sum_{k=1}^\infty |\gamma(k)| = \infty. $ The parameter $d$ in (\ref{bj}) is called the {\it long memory parameter} of
$\{X_t\}$. A particular case of linear processes
(\ref{linear})-(\ref{bj}) is the parametric class ARFIMA$(p,d,q)$, in which case
$d \in (0, 1/2) $ is the order of fractional integration. An important property
of the linear process in (\ref{linear})-(\ref{bj}) is the fact that its (normalized) partial sums process
$S_n(\tau) := \sum_{j=1}^{[nt]} X_j, \, \tau \ge 0$ tends to a fractional Brownian motion (\cite{ref4}), viz.,
\begin{equation}\label{dalX}
n^{-d - 1/2} S_n(\tau) \  \to_{D[0,1]} \  \sigma(d) B_H(t),
\end{equation}
where $H = d + \frac{1}{2} $ is the Hurst parameter, $\sigma(d)^2 := \kappa^2 B(d,1-2d)/d(1+2d) >0 $ and $\to_{D[0,1]}$ denotes the
weak convergence of random processes in the Skorohod space $D[0,1]$.

On the other hand, the linear model (\ref{linear}) has its drawbacks and sometimes is not capable
of incorporating empirical features (``stylized facts'') of some observed time series. The
"stylized facts" may include typical asymmetries, clusterings, and other nonlinearities
which are often observed in financial data, together with long memory.

The present paper introduces a new class of nonlinear processes which generalize the linear model
in (\ref{linear})-(\ref{bj}) and enjoy similar long memory properties to (\ref{covk})
and (\ref{dalX}). These processes are defined through solutions of the so-called
{\it projective stochastic equations}. Here, the term  ``projective'' refers to the fact that these equations
contain linear combinations of projections, or conditional expectations, of $X_t$'s on lagged innovation subspaces which enter the equation in a nonlinear way.

\smallskip

Let us explain the main idea of our construction. We call a {\it projective moving average} a random process $\{X_t\}$ of the form
\begin{eqnarray} \label{proma0}
X_t&=&\sum_{s\le t} g_{s,t} \zeta_s, \qquad t \in \Z,
\end{eqnarray}
where $\{\zeta_s\} $ is a sequence of standardized i.i.d. r.v.'s as in (\ref{linear}), $g_{t,t} \equiv g_0$ is a deterministic constant 
and  $g_{s,t}, \, s< t$ are r.v.'s
depending only on $\zeta_{s+1}, \dots, \zeta_t$ 
such that
\begin{equation} \label{gst0}
g_{s,t} = g_{t-s}(\zeta_{s+1}, \dots, \zeta_t), \qquad s < t,
\end{equation}
where
$g_j: \R^j \to \R, \, j=1,2, \dots$ are {\it nonrandom} functions satisfying
\begin{equation}\label{gsum}
\sum_{s \le t} \E g^2_{s,t}\  = \ \sum_{s \le 0} \E g^2_{-s}(\zeta_{s+1}, \dots, \zeta_0) \ < \ \infty.
\end{equation}
It follows easily that under condition  (\ref{gsum}) the series in (\ref{proma0}) converges in mean square and define
a stationary process with zero mean and finite variance $\E X^2_t = \sum_{s \le t} \E g^2_{s,t}$. The next question -
how to choose the ``coefficients'' $g_{s,t}$  (\ref{gst0}) so that they depend on $X_t$ and behave like
(\ref{bj}) when $j= t-s \to \infty $?

A particularly simple choice of the $g_{s,t}$'s  to achieve the above goals is
\begin{equation} \label{Qst}
g_{s,t} \ =\   b_{t-s}  Q(  \E_{[s+1,t]} X_t), \qquad s \le t
\end{equation}
where $b_j$ are as in (\ref{bj}), $Q: \R \to \R$ is a given deterministic kernel,
and $\E_{[s+1,t]} X_t := \E [X_t | \zeta_v, s+1 \le v \le t]$ is the projection of $X_t$ onto the subspace of $L^2 $ generated
by the innovations $\zeta_v, s+1\le  v\le t $ (the conditional expectation).
The corresponding projective stochastic equation has the form
\begin{eqnarray} \label{proma1}
X_t&=&\sum_{s\le t}  b_{t-s}  Q(  \E_{[s+1,t]} X_t) \zeta_s.
\end{eqnarray}
Notice  that when $s \to -\infty $ then $ \E_{[s+1,t]} X_t \to  X_t$ by a general property of a conditional expectation
and then $g_{s,t} \sim b_{t-s}  Q(X_t)$ if $Q$ is continuous. This means that
the $g_{s,t}$'s in (\ref{Qst}) feature 
both the long memory in 
(\ref{bj}) and the dependence 
on the ``current'' value $X_t$ through  $Q(X_t)$.
In particular, for $Q(x) = \max(0,x)$, the behavior of $g_{s,t}$ in (\ref{Qst})
strongly depends on the sign of $X_t$ and
the trajectory of (\ref{proma1}) appears very asymmetric (see
Fig.~\ref{lmQint2}, top).

Let us briefly describe the remaining sections.
Sec.~2 contains basic definitions and properties of  projective processes.
Sec.~3 introduces the notion of nested Volterra series which plays an important role for solving of
projective equations.
Sec.~4 introduces a general class of projective stochastic equations, (\ref{proma1}) being a particular case.
We obtain sufficient conditions
of solvability of these equations, and a recurrent formula for computation of "coefficients"
$g_{s,t}$ (Theorem \ref{thm1}). Sec.~5 and 6 present some examples and
simulated trajectories and histograms of  projective equations.
It turns out that the LARCH model studied in \cite{ref7} and
elsewhere is a particular case of 
projective equations corresponding
to linear kernel $Q(x)$ (Sec.~5). Some modifications of projective equations are discussed in Sec.7.   
Sec.~8 deals with long memory properties of stationary solutions of stochastic projective equations.
We show that under some additional conditions these solutions have
long memory properties similar to  (\ref{covk}) 
and (\ref{dalX}).

Finally, we remark that  ``nonlinear long memory''  is a
general term and that other time series models different from ours
for such behavior
were proposed in the literature. Among them, probably the most studied
class are subordinated processes of the form $\{Q(X_t)\}$, where
$\{X_t\}$ is a Gaussian or linear long memory process and $Q: \R \to \R$ is a nonlinear function.
See \cite{ref19}, \cite{ref14} and \cite{ref11} for a detailed discussion. A related class
of Gaussian subordinated stochastic volatility models is studied in \cite{ref16}. \cite{ref6a} discuss a class
of long memory Bernoulli shifts.
\cite{ref1} consider
fractionally integrated process with nonlinear autoregressive innovations.
A general invariance principle for fractionally integrated models with weakly dependent innovations satisfying
a projective dependence condition of \cite{ref4a}
is established in \cite{ref17}. See also \cite{ref21} and Remark \ref{Wu0} below.

We expect that the results of this paper can be extended in several directions, e.g., 
projective equations with initial condition, continuous time processes, random field set-up,
infinite variance processes.
For applications,
a major challenge is estimation of ``parameters'' of projective equations. We plan to study some
of these  questions in the future.

\section{Projective processes and their properties}

Let $\{\zeta_t, t \in {\Z}\} $ be a sequence of i.i.d. r.v.'s with $\E \zeta_0 = 0, \, \E \zeta^2_0 = 1$.
For any integers $s\le t $ we denote
${\mathcal F}_{[s,t]} := \sigma\{\zeta_u: u \in [s,t]\} $ the sigma-algebra generated by $\zeta_u, u \in [s,t]$, \,
${\mathcal F}_{(-\infty,t]} := \sigma\{\zeta_u: u \le t\}, \,
{\mathcal F} := \sigma\{\zeta_u: u \in \Z\}. $ For $s > t$,
we define ${\mathcal F}_{[s,t]} := \{\emptyset, \Omega \} $ as  the trivial sigma-algebra.  Let
$L^2_{[s,t]}, \, L^2_{(-\infty,t]}, \,
L^2$ be the spaces of all square integrable r.v.'s $\xi $ measurable w.r.t.  ${\mathcal F}_{[s,t]}, \,
{\mathcal F}_{(-\infty,t]}, \,
{\mathcal F}$, respectively.
For any $s, t \in \Z$ let
$$
\E_{[s,t]} [\xi ] := \E\left[ \xi \big| {\mathcal F}_{[s,t]} \right], \qquad \xi \in  L^2
$$
be the conditional expectation. Then $\xi \mapsto \E_{[s,t]} [\xi ] $ is a bounded linear operator in  $L^2$; moreover,
$\E_{[s,t]}, \, s,t \in \Z $ is a projection family satisfying $\E_{[s_2, t_2]} \E_{[s_1,t_1]} = \E_{[s_2, t_2]\cap [s_1,t_1]} $
for any intervals $[s_1,t_1], [s_2,t_2] \subset \Z$.
From the definition of conditional expectation it follows that if $g_u:  \R \to \R, u \in \Z $ are arbitrary measurable functions with $\E g^2_u(\zeta_u) < \infty, $
$[s_2,t_2] \subset \Z$ is a given interval  and
$\xi = \prod_{u\in [s_2,t_2]} g_u(\zeta_u)$ is a product of independent r.v.'s, then for any interval $[s_1, t_1] \subset \Z$
\begin{eqnarray*}
\E_{[s_1,t_1]} \prod_{u\in [s_2,t_2]} g_u(\zeta_u)&=&\prod_{u\in [s_1,t_1] \cap [s_2,t_2]} g_u(\zeta_u)
\prod_{v \in [s_2,t_2]\setminus [s_1,t_1]} \E [g_v(\zeta_v)].
\end{eqnarray*}
In particular, if $\E g_u(\zeta_u) = 0, \, u \in \Z $ then
\begin{eqnarray} \label{projP}
\E_{[s_1,t_1]} \prod_{u\in [s_2,t_2]} g_u(\zeta_u)&=&\begin{cases}
\prod_{u\in [s_1,t_1]} g_u(\zeta_u), &[s_2,t_2] \subset [s_1,t_1], \\
0, &[s_2,t_2] \not\subset [s_1,t_1].
\end{cases}
\end{eqnarray}
Any r.v. $Y_t \in L^2_{(-\infty,t]} $ can be expanded into orthogonal series
$Y_t  =  \E Y_t + \sum_{s \le t} P_{s,t} Y_t, $ where $P_{s,t} Y_t := (\E_{[s,t]} - \E_{[s+1,t]})Y_t.$
Note that  $\{ P_{s,t} Y_t, {\cal F}_{s,t}, s\le t\}$ is a backward martingale difference sequence and
$\E Y^2_t  =   (\E Y_t)^2 +  \sum_{s \le t} \E (P_{s,t} Y_t)^2. $

\begin{definition} \label{propro}
A {\it projective process}
is a random sequence $\{Y_t \in  L^2_{(-\infty,t]},  \, t \in \Z\}$ of the form
\begin{equation}
Y_t\  = \  \E Y_t +  \sum_{s\le t} g_{s,t} \zeta_s, \label{XMA}
\end{equation}
where $g_{s,t} $ are r.v.'s satisfying the following conditions (i) and (ii):

\smallskip

\noindent (i) $g_{s,t}$ is ${\mathcal F}_{[s+1,t]}$-measurable, \ $ \forall s, t \in \Z, \, s < t; \, g_{t,t} $ is a
deterministic number;

\smallskip

\noindent (ii) $\sum_{s \le t} \E g^2_{s,t} < \infty, \ \forall \, t \in \Z. $

\end{definition}

In other words, a projective process has the property that the projections $\E_{[s,t]} Y_t = \E Y_t + \sum_{i=s}^{t} P_{i,t} Y_t =
\E Y_t + \sum_{i=s}^t \zeta_i g_{i,t}, \, s \le t $ form a backward martingale transform
w.r.t. the nondecreasing family ${\{\cal F}_{[s,t]}, s \le t\} $ of sigma-algebras, for each $t \in \Z$ fixed. A consequence
of the last fact is the following moment inequality which is an easy consequence of
Rosenthal's inequality (\cite{ref12}, p.24).
See also \cite{ref11}, Lemma 2.5.3.

\begin{proposition} \label{propmom} Let $\{Y_t \} $ be a projective process in (\ref{XMA}).
Assume that $\mu_p := \E |\zeta_0|^p < \infty $ and $\sum_{s \le t} (\E |g_{s,t}|^p )^{2/p} < \infty $
for some $p \ge 2$. Then $\E |Y_t|^p < \infty $. Moreover,
there exists a constant
$C_p < \infty $ depending on $p$  alone and such that
\begin{equation*}
\E |Y_t|^p \ \le \ C_p \Big( |\E Y_t|^p + \mu_p \big(\sum_{s \le t} (\E |g_{s,t}|^p )^{2/p} \big)^{p/2}\Big).
\end{equation*}
\end{proposition}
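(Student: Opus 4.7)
The plan is to reduce the claim to an application of Rosenthal's inequality for martingale differences in exactly the form cited from Hall--Heyde. First I would split off the mean via $|Y_t|^p \le 2^{p-1}(|\E Y_t|^p + |Y_t - \E Y_t|^p)$, so that it suffices to bound the $p$-th moment of the centered series $Y_t - \E Y_t = \sum_{s\le t} \zeta_s g_{s,t}$.

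To place that series in standard (forward) martingale form, I would reindex: fix $t$ and set $\tilde\zeta_n := \zeta_{t-n}$, $\tilde g_n := g_{t-n,t}$, $\mathcal{G}_n := \sigma(\tilde\zeta_0,\dots,\tilde\zeta_n)$ for $n\ge 0$. Condition (i) of Definition~\ref{propro} gives that $\tilde g_n$ is $\sigma(\zeta_{t-n+1},\dots,\zeta_t)$-measurable, i.e.\ $\mathcal{G}_{n-1}$-measurable, while $\tilde\zeta_n$ is mean-zero and independent of $\mathcal{G}_{n-1}$. Hence $d_n := \tilde\zeta_n \tilde g_n$ is a martingale difference sequence with respect to $(\mathcal{G}_n)$, and its partial sums $M_N := \sum_{n=0}^N d_n$ form a martingale. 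Rosenthal's inequality then yields
$$\E|M_N|^p \le C_p\Big(\sum_{n=0}^N \E|d_n|^p + \big(\sum_{n=0}^N \E d_n^2\big)^{p/2}\Big).$$

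Independence of $\tilde\zeta_n$ and $\tilde g_n$ gives $\E|d_n|^p = \mu_p\,\E|\tilde g_n|^p$ and $\E d_n^2 = \E \tilde g_n^2$. Write $a_n := (\E|\tilde g_n|^p)^{2/p}$. Lyapunov's inequality gives $\E \tilde g_n^2 \le a_n$, so the ``variance'' term is controlled by $(\sum_n a_n)^{p/2}$. For the ``jumps'' term I would use the elementary fact that, for $q \ge 1$ and $a_n\ge 0$, $\sum_n a_n^q \le (\sum_n a_n)^q$ (since $a_n \le \sum_m a_m$, hence $a_n^q \le a_n(\sum_m a_m)^{q-1}$). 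Applied with $q = p/2$ this gives $\sum_n \E|\tilde g_n|^p = \sum_n a_n^{p/2} \le (\sum_n a_n)^{p/2}$. Finally $\mu_p \ge 1$ (Jensen, since $\E\zeta_0^2 = 1$), so both terms combine into $C_p\, \mu_p (\sum_n a_n)^{p/2}$, which is the claimed bound.

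The only real technical point, which I would handle at the end, is the passage $N\to\infty$: hypothesis (ii) of Definition~\ref{propro} delivers $L^2$ convergence of the series to $Y_t-\E Y_t$, while the $L^p$ bound on $M_N$ is uniform in $N$, so Fatou's lemma transfers the estimate to the limit. I do not expect any genuine obstacle here; the argument is essentially bookkeeping once the backward filtration is flipped and Rosenthal is invoked, with the two elementary $\ell^{p/2}$--$\ell^1$ comparisons doing the rest.
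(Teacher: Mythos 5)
Your overall plan (flip the backward filtration, treat $d_n=\tilde\zeta_n\tilde g_n$ as a martingale difference sequence, apply a Rosenthal-type bound, pass to the limit by Fatou) is exactly the route the paper intends, since it proves the proposition only by citing the martingale Rosenthal inequality of Hall--Heyde. However, the inequality you actually invoke is not that inequality: the bound $\E|M_N|^p\le C_p\big(\sum_n\E|d_n|^p+(\sum_n\E d_n^2)^{p/2}\big)$ with \emph{unconditional} variances is Rosenthal's inequality for \emph{independent} summands, and your $d_n$ are not independent, because $\tilde g_n$ depends on $\tilde\zeta_0,\dots,\tilde\zeta_{n-1}$. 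For martingale differences the correct (Burkholder--Rosenthal) form replaces $(\sum_n\E d_n^2)^{p/2}$ by $\E\big[\big(\sum_n\E[d_n^2\mid\mathcal G_{n-1}]\big)^{p/2}\big]$, which dominates your term by Jensen; the unconditional version is genuinely false for martingale transforms. A quick way to see this for $p>2$: take $\tilde g_0=0$ and $\tilde g_n=A\,\mathbf{1}(E)$ for $1\le n\le N$ with $E\in\mathcal G_0$, $\pr(E)=q$; then $\E|M_N|^p\asymp q\,A^pN^{p/2}$, while your right-hand side is of order $qA^pN+q^{p/2}A^pN^{p/2}$, so the claimed bound fails when $q$ is small and $N$ is large. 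So as written, the key step would fail.

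The repair is short and produces exactly the quantity in the statement, which is why I would not call the approach wrong, only the cited inequality. Here $\E[d_n^2\mid\mathcal G_{n-1}]=\tilde g_n^2$, since $\tilde g_n$ is $\mathcal G_{n-1}$-measurable and $\tilde\zeta_n$ is independent of $\mathcal G_{n-1}$ with unit variance; hence the conditional term is $\E\big(\sum_n\tilde g_n^2\big)^{p/2}=\big\Vert\sum_n\tilde g_n^2\big\Vert_{p/2}^{p/2}\le\big(\sum_n\Vert\tilde g_n^2\Vert_{p/2}\big)^{p/2}=\big(\sum_n(\E|\tilde g_n|^p)^{2/p}\big)^{p/2}$ by the triangle inequality in $L^{p/2}$ (legitimate since $p/2\ge1$). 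This Minkowski step is precisely where the exponent $2/p$ in the proposition comes from, and it replaces your Lyapunov bound $\E\tilde g_n^2\le a_n$, which is then not needed. Everything else in your write-up is sound: $\E|d_n|^p=\mu_p\,\E|\tilde g_n|^p$ by independence, the comparison $\sum_n a_n^{p/2}\le(\sum_n a_n)^{p/2}$, the observation $\mu_p\ge1$, the splitting off of $\E Y_t$, and the Fatou argument for $N\to\infty$ using the $L^2$ convergence guaranteed by condition (ii) of Definition \ref{propro}.
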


\begin{definition}  \label{proma} A {\it projective
moving average} is a projective process of (\ref{XMA}) such that the mean $\E Y_t = \mu $ is constant and
there exist a number $g_0 \in \R$ and
nonrandom measurable functions $g_j: \R^j \to \R, j=1,2, \dots $ such that
$$
g_{s,t} = g_{t-s}(\zeta_{s+1}, \dots, \zeta_t)  \qquad \text{a.s., \ for any} \quad  s \le t, \, s,t \in \Z.
$$
\end{definition}

\smallskip

By definition,  a projective moving average is a stationary Bernoulli shift (\cite{ref5}, p.21):
\begin{equation}\label{bernoulli}
Y_t \ = \  \mu +  \sum_{s \le t} \zeta_s g_{t-s} (\zeta_{s+1}, \dots, \zeta_t)
\end{equation}
with  mean $\mu$ and covariance
\begin{eqnarray}
\hskip-.3cm \cov (Y_s, Y_t)&=&\sum_{u\le s} \E [g_{s-u } (\zeta_{u+1}, \dots, \zeta_s) g_{t-u}(\zeta_{u+1}, \dots, \zeta_t) ]  \nn \\
&=& \sum_{u\le 0} \E[ g_{-u} (\zeta_{u+1}, \dots, \zeta_0) g_{t-s -u}(\zeta_{u+1}, \dots, \zeta_{t-s-u}) ], \quad s\le t.\label{XMA1}
\end{eqnarray}
These facts together with the
ergodicity of Bernoulli shifts  (implied by a general result in \cite{ref18}, Thm.3.5.8) are summarized in the following corollary.

\smallskip

\begin{corollary} \label{rem1} A projective moving average is a strictly stationary and ergodic
stationary process with finite variance and
covariance given in (\ref{XMA1}).
\end{corollary}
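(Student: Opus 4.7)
The plan is to break the corollary into three pieces: finite variance and the Bernoulli-shift representation (essentially by construction), strict stationarity (via the shift invariance of the i.i.d. driving sequence), the covariance formula (by orthogonality of the backward martingale differences), and finally ergodicity (by quoting \cite{ref18}).

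First I would observe that Definition \ref{proma} gives us a fixed measurable functional $F$ depending only on the nonrandom kernels $g_0, g_1, g_2, \dots$ such that $Y_t = F(\zeta_t, \zeta_{t-1}, \zeta_{t-2}, \dots)$, so $\{Y_t\}$ is literally a Bernoulli shift of the i.i.d. sequence $\{\zeta_s\}$. Finite second moment of $Y_t$ comes out of condition (ii) in Definition \ref{propro} together with the fact that each $\zeta_s$ is independent of $g_{s,t}$ (since $g_{s,t}$ is $\mathcal{F}_{[s+1,t]}$-measurable), so that the summands $\zeta_s g_{s,t}$ are orthogonal with $\E[\zeta_s g_{s,t}]^2 = \E g_{s,t}^2$ and the series converges in $L^2$. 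Strict stationarity then follows because the distribution of $\{(\zeta_{t+h-j})_{j\ge 0}\}_{t\in\Z}$ does not depend on $h$, and $F$ is applied coordinate-wise.

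For the covariance formula, fix $s\le t$ and expand
\[
\cov(Y_s,Y_t) \ = \ \sum_{u\le s}\sum_{v\le t} \E\bigl[\zeta_u\, g_{s-u}(\zeta_{u+1},\dots,\zeta_s)\ \zeta_v\, g_{t-v}(\zeta_{v+1},\dots,\zeta_t)\bigr].
\]
For $u\ne v$, say $u<v$, the innermost factor $\zeta_u$ has mean zero and is independent of everything with index $\ge u+1$; conditioning on $\mathcal{F}_{[u+1,t]}$ kills the term. Only the diagonal $u=v$ survives, and using $\E\zeta_u^2=1$ together with independence of $\zeta_u$ from $(\zeta_{u+1},\dots,\zeta_t)$, each diagonal term reduces to $\E[g_{s-u}(\zeta_{u+1},\dots,\zeta_s)\,g_{t-u}(\zeta_{u+1},\dots,\zeta_t)]$. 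A change of index $u\mapsto u+s$ in the resulting sum yields exactly the right-hand side of (\ref{XMA1}). The swap of sum and expectation is justified by the $L^2$ convergence noted above together with Cauchy--Schwarz.

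Ergodicity is the least hands-on step: $\{Y_t\}$ is a measurable function of the two-sided i.i.d. sequence $\{\zeta_s\}$, and on such a product probability space the (Bernoulli) shift is ergodic (in fact mixing); since $\{Y_t\}$ is generated from this shift by a stationary coding, it inherits ergodicity. This is precisely the content of \cite{ref18}, Thm.~3.5.8, which can be cited directly. I expect no substantial obstacle anywhere; the only step requiring a bit of care is justifying the interchange of expectation and the double series in the covariance computation, but this is standard once the $L^2$ convergence established from condition (ii) is in hand.
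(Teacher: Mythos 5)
Your proposal is correct and matches the paper's own argument: the paper likewise notes that a projective moving average is a Bernoulli shift of the i.i.d.\ sequence (giving strict stationarity and, via the same orthogonality computation, the covariance formula (\ref{XMA1})), and cites \cite{ref18}, Thm.~3.5.8 for ergodicity. No gaps; your extra care with the $L^2$ interchange is fine but routine.
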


\begin{remark} \label{rem2} {\rm If the coefficients  $g_{s,t}$ are nonrandom, a projective moving average is a  linear process
$Y_t = \mu + \sum_{s\le t} g_{t-s} \zeta_s, \, t \in \Z.$}
\end{remark}

\begin{proposition} \label{propfilter} Let  $\{Y_t\}$ \ be a projective process of (\ref{XMA}) and $\{a_j, j \ge 0\}$ a deterministic
sequence, $\sum_{j=0}^\infty |a_j| < \infty, \sum_{j=0}^\infty |a_j| |\E Y_{t-j}| < \infty $.
Then $\{u_t := \sum_{j=0}^\infty a_j Y_{t-j}, \, t \in \Z\} $  is a projective
process $u_t = \E u_t + \sum_{s \le t} \zeta_s G_{s,t} $  with $\E u_t = \sum_{j=0}^\infty a_j \E Y_{t-j}$ and
coefficients $G_{s,t} := \sum_{j=0}^{t-s} a_j g_{s,t-j}$.
\end{proposition}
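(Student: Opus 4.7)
The strategy is direct: substitute the backward-martingale expansion of each $Y_{t-j}$ into the linear filter and then swap the two sums so that $\zeta_s$ is pulled out and the contributions to the coefficient at lag $s$ are collected. Concretely, using (\ref{XMA}) for each term of the outer sum,
\[
u_t \;=\; \sum_{j=0}^{\infty} a_j \E Y_{t-j} \;+\; \sum_{j=0}^{\infty} a_j \sum_{s\le t-j} g_{s,t-j}\,\zeta_s,
\]
and after interchanging the order of summation in the double sum (writing $k=t-j\ge s$, i.e.\ $0\le j\le t-s$) one obtains exactly
\[
u_t \;=\; \E u_t \;+\; \sum_{s\le t} \zeta_s\, G_{s,t}, \qquad G_{s,t}:=\sum_{j=0}^{t-s} a_j\, g_{s,t-j},
\]
with $\E u_t=\sum_j a_j \E Y_{t-j}$ finite by hypothesis.

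The verification of conditions (i) and (ii) of Definition \ref{propro} for the new coefficients is then routine. For (i), each summand $g_{s,t-j}$ with $s<t-j$ is ${\mathcal F}_{[s+1,t-j]}$-measurable, hence ${\mathcal F}_{[s+1,t]}$-measurable; the term $j=t-s$ contributes $a_{t-s}g_{t-s,t-s}$ which is deterministic, and likewise $G_{t,t}=a_0 g_{t,t}$ is a constant. For (ii), by Minkowski's inequality in $L^2$,
\[
\|G_{s,t}\|_2 \;\le\; \sum_{j=0}^{t-s} |a_j|\,\|g_{s,t-j}\|_2,
\]
and then Cauchy--Schwarz gives
\[
\|G_{s,t}\|_2^2 \;\le\; \Big(\sum_{j=0}^\infty |a_j|\Big)\sum_{j=0}^{t-s} |a_j|\,\E g_{s,t-j}^2.
\]
Summing over $s\le t$ and swapping the order of summation once more bounds $\sum_{s\le t}\E G_{s,t}^2$ by $(\sum_j |a_j|)\sum_{j\ge 0} |a_j|\,V_{t-j}$, where $V_k:=\sum_{s\le k}\E g_{s,k}^2<\infty$; this is finite (the natural setting is that $V_k$ is bounded in $k$, as holds for instance for stationary projective moving averages via Corollary \ref{rem1}).

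The one step that requires actual care, and which I expect to be the main technical point, is the justification of the interchange of the two infinite sums together with the in-$L^2$ convergence of the defining series $u_t=\sum_j a_j Y_{t-j}$. Both follow from the same Minkowski estimate: the random part $\sum_j a_j(Y_{t-j}-\E Y_{t-j})$ converges absolutely in $L^2$ under the bound above, so one may first show that the partial sums $u_t^{(N)}:=\sum_{j\le N} a_j Y_{t-j}$ form a Cauchy sequence in $L^2$, rewrite each $u_t^{(N)}$ in the form $\E u_t^{(N)}+\sum_{s\le t}\zeta_s G^{(N)}_{s,t}$ by a finite (hence legitimate) reordering, and finally pass to the limit $N\to\infty$ in $L^2$, identifying the limit coefficient at $\zeta_s$ with $G_{s,t}$ via orthogonality of $\{\zeta_s\}$ and dominated convergence in the defining sum for $G_{s,t}$.
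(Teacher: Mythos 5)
Your proof is correct and is essentially the argument the paper intends: the paper omits the proof entirely, stating only that it ``follows easily by the Cauchy--Schwarz inequality,'' which is precisely your substitution-and-interchange argument with the Minkowski/Cauchy--Schwarz bound on $\sum_{s\le t}\E G_{s,t}^2$. Your observation that one implicitly needs $\sum_{j}|a_j|\,\mathrm{Var}(Y_{t-j})<\infty$ (e.g.\ variances bounded in $t$, as in the stationary moving-average case) to guarantee $L^2$ convergence is a fair remark about an implicit assumption in the statement itself, not a gap in your argument.
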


\noi {\it Proof} follows easily by the Cauchy-Schwarz inequality  and is omitted. \hfill   $\Box$

\begin{proposition} \label{propunique} If \ $\{Y_t\}$ \ is a projective process of (\ref{XMA}), then for any $s\le t$
\begin{eqnarray}\label{Eg}
\hskip-.3cm \E_{[s,t]} Y_t&=&\E Y_t + \sum_{s \le u \le t} \zeta_u g_{u,t}, \quad
P_{s,t} Y_t \ = \ (\E_{[s,t]} - \E_{[s+1,t]}) Y_t\  =\  \zeta_s g_{s,t}.
\end{eqnarray}
The representation (\ref{XMA}) is unique: if
(\ref{XMA}) and $Y_t = \sum_{s\le t} g'_{s,t} \zeta_s $ are two representations, with $g'_{s,t}$ satisfying
conditions (i) and (ii) of Definition \ref{propro}, then $g'_{s,t} = g_{s,t}\ \forall \  s\le t. $
\end{proposition}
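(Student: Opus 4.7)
The plan is to compute $\E_{[s,t]}Y_t$ term-by-term in the series \refeq{XMA}, using the measurability condition (i) together with independence of the innovations $\{\zeta_u\}$. Conditional expectation is an $L^2$-contraction, so by condition (ii) we may interchange $\E_{[s,t]}$ with the convergent series $\sum_{u \le t} g_{u,t}\zeta_u$. It then remains to evaluate $\E_{[s,t]}[g_{u,t}\zeta_u]$ in the two regimes.

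For $s \le u \le t$: by (i), $g_{u,t}$ is $\mathcal{F}_{[u+1,t]}$-measurable (or a constant if $u=t$), hence $\mathcal{F}_{[s,t]}$-measurable; $\zeta_u$ is also $\mathcal{F}_{[s,t]}$-measurable, so $\E_{[s,t]}[g_{u,t}\zeta_u] = g_{u,t}\zeta_u$. For $u < s$: since $g_{u,t}$ is a Borel function of $(\zeta_{u+1},\dots,\zeta_t)$, the random variable $\zeta_u$ is independent of the pair $(g_{u,t},\mathcal{F}_{[s,t]})$. Conditioning on $\mathcal{F}_{[s,t]}$ freezes $(\zeta_s,\dots,\zeta_t)$ while $\zeta_u$ and $(\zeta_{u+1},\dots,\zeta_{s-1})$ remain independent of this sigma-algebra, and averaging out $\zeta_u$ first gives a factor $\E\zeta_u = 0$. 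Summing both regimes produces the first displayed formula, and subtracting the analogous expression for $s+1$ yields $P_{s,t}Y_t = \zeta_s g_{s,t}$.

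For the uniqueness claim, I would apply the identity just proved to both representations. This gives $\zeta_s g_{s,t} = P_{s,t}Y_t = \zeta_s g'_{s,t}$ a.s., i.e.\ $\zeta_s (g_{s,t}-g'_{s,t}) = 0$ a.s. Since $g_{s,t}-g'_{s,t}$ is $\mathcal{F}_{[s+1,t]}$-measurable and therefore independent of $\zeta_s$, and since $\prob(\zeta_s \neq 0) > 0$ (because $\E\zeta_s^2 = 1$), the standard product-of-independents argument
\[
0 = \prob(\zeta_s(g_{s,t}-g'_{s,t}) \neq 0) = \prob(\zeta_s \neq 0)\,\prob(g_{s,t} \neq g'_{s,t})
\]
forces $g_{s,t} = g'_{s,t}$ a.s.

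The only real subtlety is the justification of termwise conditioning of the infinite sum in \refeq{XMA} and, more delicately, the handling of $u<s$ where $g_{u,t}$ may depend on innovations both before and after time $s$. This I would dispatch cleanly by writing $g_{u,t} = h(\zeta_{u+1},\dots,\zeta_{s-1},\zeta_s,\dots,\zeta_t)$ and computing $\E_{[s,t]}[\zeta_u h(\cdot)]$ via Fubini/freezing: the outer variables $(\zeta_s,\dots,\zeta_t)$ are fixed, the inner variables $(\zeta_u,\zeta_{u+1},\dots,\zeta_{s-1})$ are averaged out, and the independence of $\zeta_u$ from $(\zeta_{u+1},\dots,\zeta_{s-1})$ together with $\E\zeta_u = 0$ makes the expectation vanish. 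Everything else is routine.
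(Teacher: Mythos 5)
Your proposal is correct and follows essentially the same route as the paper: the identity \refeq{Eg} is obtained by termwise conditioning of the series (which the paper dismisses as ``immediate by definition''), and the uniqueness step is exactly the paper's argument that $\zeta_s(g_{s,t}-g'_{s,t})=0$ a.s.\ with $g_{s,t}-g'_{s,t}$ independent of $\zeta_s$ and $\prob(\zeta_s\neq 0)>0$ forces $g_{s,t}=g'_{s,t}$ a.s. Your extra care with the $L^2$-interchange and the $u<s$ freezing computation only fills in details the paper leaves implicit.
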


\begin{proof}
(\ref{Eg}) is immediate by definition of projective process. From
(\ref{Eg}) it follows that $\zeta_s
g''_{s,t} = 0,$ where $g''_{s,t} := g_{s,t} - g'_{s,t}$ is independent of $\zeta_s$. Relation
$\E \zeta^2_s = 1$ implies $\P (|\zeta_s|^2 > \epsilon) >0$ for all $\epsilon >0 $ small enough.
Hence,  $0= \P (|\zeta_s g''_{s,t}|> \epsilon)
\ge \P (|\zeta_s| > \sqrt{\epsilon}, |g''_{s,t}|> \sqrt{\epsilon}) =
\P (|\zeta_s| > \sqrt{\epsilon}) \P( |g''_{s,t}|> \sqrt{\epsilon}),$ implying
$ \P( |g''_{s,t}|> \sqrt{\epsilon}) = 0$ for any $\epsilon >0$. \hfill $\Box$
\end{proof}

The following invariance principle is due to Dedecker and Merlev\`ede (\cite{ref4a}, Cor.~3), see also (\cite{ref20}, Thm.~3 (i)).

\begin{proposition} Let  $\{Y_t\}$ \ be a projective moving average of (\ref{XMA}) such that $\mu = 0$
and
\begin{equation}\label{DedMer}
\Omega(2) \ := \ \sum_{t=0}^\infty \| g_{0,t} \| \ < \infty,
\end{equation}
where $\| \xi \| = \E^{1/2} [\xi^2], \, \xi \in L^2.$
Then
\begin{equation} \label{DedMer1}
n^{-1/2} \sum_{t=1}^{[n\tau]}Y_t\, \longrightarrow_{D[0,1]}\, c_Y B(\tau),
\end{equation}
where $B$ is a standard Brownian motion and $c^2_Y := \| \sum_{t=0}^\infty g_{0,t} \|^2 = \sum_{t\in \Z} \E [Y_0 Y_t]. $
\end{proposition}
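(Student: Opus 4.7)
The plan is to verify the hypotheses of the stationary-sequence invariance principle of Dedecker and Merlev\`ede (\cite{ref4a}, Cor.~3) for $\{Y_t\}$ adapted to the natural filtration $\mathcal{F}_t := \mathcal{F}_{(-\infty,t]}$, and then to identify the limiting variance. Strict stationarity and ergodicity are already supplied by Corollary \ref{rem1}. Write $P_0 \xi := \E[\xi \mid \mathcal{F}_0] - \E[\xi \mid \mathcal{F}_{-1}]$ for the martingale-difference projection at time $0$. The Dedecker--Merlev\`ede theorem gives (\ref{DedMer1}) provided the Hannan-type series $\sum_{t\ge 0} \| P_0 Y_t \|$ converges, in which case the limit is a Brownian motion with variance $c_Y^2 = \| \sum_{t\ge 0} P_0 Y_t \|^2$.

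The key computation is to identify $P_0 Y_t$ for $t\ge 0$. Since $Y_t - \mu = \sum_{u\le t} \zeta_u g_{u,t}$ is a backward martingale transform, a straightforward extension of (\ref{Eg}) in Proposition \ref{propunique} to conditioning on $\mathcal{F}_{(-\infty,s]}$ gives $\E[Y_t \mid \mathcal{F}_s] = \mu + \sum_{u\le s} \zeta_u g_{u,t}$ for any $s\le t$. Taking $s=0$ and $s=-1$ and subtracting yields $P_0 Y_t = \zeta_0 g_{0,t}$ for all $t\ge 1$ (and $P_0 Y_0 = \zeta_0 g_0$). Since $g_{0,t} = g_t(\zeta_1,\ldots,\zeta_t)$ is independent of $\zeta_0$ and $\E \zeta_0^2 = 1$, we obtain $\| P_0 Y_t\| = \|g_{0,t}\|$, and therefore $\sum_{t\ge 0} \|P_0 Y_t\| = \Omega(2) < \infty$, exactly the hypothesis required.

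Dedecker--Merlev\`ede then deliver (\ref{DedMer1}) with
\[
c_Y^2 \ = \ \Big\|\sum_{t\ge 0} P_0 Y_t\Big\|^2 \ = \ \Big\|\zeta_0 \sum_{t\ge 0} g_{0,t}\Big\|^2 \ = \ \Big\|\sum_{t\ge 0} g_{0,t}\Big\|^2,
\]
again using $\E \zeta_0^2 = 1$ together with independence. The second identification $c_Y^2 = \sum_{t\in\Z} \E[Y_0 Y_t]$ is the standard long-run variance formula for stationary sequences; it follows from (\ref{XMA1}) together with absolute summability of the covariances, since Cauchy--Schwarz applied to (\ref{XMA1}) yields $|\E[Y_0 Y_t]| \le \sum_{u\le 0} \|g_{-u,0}\| \|g_{-u,t}\|$, which summed over $t\ge 0$ is bounded by $\Omega(2)^2 < \infty$ and justifies term-by-term interchange of sums.

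Since the proposition is explicitly attributed to \cite{ref4a}, the whole argument reduces to the projective-moving-average-specific calculation $\|P_0 Y_t\| = \|g_{0,t}\|$, and no serious obstacle is foreseen; the only care needed is in verifying that the extension of (\ref{Eg}) from $\mathcal{F}_{[s,t]}$ to $\mathcal{F}_{(-\infty,s]}$ is legitimate, which follows by taking an $L^2$ limit as the left endpoint tends to $-\infty$, using condition (ii) of Definition \ref{propro}.
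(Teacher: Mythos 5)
You rely on the identity $P_0 Y_t = \zeta_0 g_{0,t}$ for the predictive projections $P_0\xi := \E[\xi\mid\mathcal F_{(-\infty,0]}]-\E[\xi\mid\mathcal F_{(-\infty,-1]}]$, derived from the claim that $\E[Y_t\mid\mathcal F_{(-\infty,s]}]=\mu+\sum_{u\le s}\zeta_u g_{u,t}$ for $s\le t$. That claim is false: for $s<t$ and $u\le s$ the coefficient $g_{u,t}=g_{t-u}(\zeta_{u+1},\dots,\zeta_t)$ involves the innovations $\zeta_{s+1},\dots,\zeta_t$, so the right-hand side is not even $\mathcal F_{(-\infty,s]}$-measurable; the correct formula is $\E[Y_t\mid\mathcal F_{(-\infty,s]}]=\sum_{u\le s}\zeta_u\,\E\big[g_{u,t}\mid\mathcal F_{[u+1,s]}\big]$, whence $P_0Y_t=\zeta_0\,\E[g_{0,t}]+\sum_{u\le -1}\zeta_u\big(\E_{[u+1,0]}-\E_{[u+1,-1]}\big)g_{u,t}$, not $\zeta_0 g_{0,t}$. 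This is genuinely different: for $Y_t=\zeta_t+\zeta_{t-1}\zeta_t$ (i.e. $g_0=1$, $g_1(x)=x$, $g_j\equiv 0$ for $j\ge 2$) one has $P_0Y_1=0$ while $\|g_{0,1}\|=1$. The extension of (\ref{Eg}) you invoke is legitimate only for the sigma-algebras $\mathcal F_{[s,t]}$ with the right endpoint $t$ fixed: $\{Y_t\}$ is a backward martingale transform in $s$ for fixed $t$, not a martingale transform with respect to the increasing filtration $\mathcal F_{(-\infty,s]}$. Consequently neither your verification of the Hannan-type hypothesis $\sum_{t\ge0}\|P_0Y_t\|<\infty$ nor your identification $c_Y^2=\|\sum_{t\ge0}P_0Y_t\|^2=\|\sum_{t\ge0}g_{0,t}\|^2$ is established.

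The gap is not repairable by a sharper estimate along the same route, because $\Omega(2)<\infty$ does not control the forward-filtration projection sum: taking $g_m(x_1,\dots,x_m)=c_m m^{-1/2}(x_1+\dots+x_m)$ with $c_m=2^{-k/2}$ if $m=2^k$ and $c_m=0$ otherwise gives $\Omega(2)=\sum_k 2^{-k/2}<\infty$, while a direct computation yields $\|P_0Y_t\|^2=\sum_{m>t}c_m^2/m\ge 1/(4t^2)$, so $\sum_{t\ge1}\|P_0Y_t\|=\infty$. The quantity $\zeta_0 g_{0,t}$ is in fact the martingale increment with respect to the \emph{decreasing} family of sigma-algebras: $\zeta_0 g_{0,t}=(\E_{[0,t]}-\E_{[1,t]})Y_t=\E[Y_t\mid\sigma(\zeta_u,u\ge0)]-\E[Y_t\mid\sigma(\zeta_u,u\ge1)]$, which is exactly how the symbol $P_0$ is used in Remark \ref{Wu0}. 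Hence, to deduce the proposition from the Dedecker--Merlev\`ede (or Hannan/Wu type) criterion under the hypothesis $\Omega(2)<\infty$, one must apply it to the time-reversed process, i.e. in the (non-adapted) form relative to the reversed filtration $\sigma(\zeta_u\colon u\ge -t)$, for which the projection norms are exactly $\|g_{0,t}\|$, and then transfer the invariance principle back to $\{Y_t\}$ by stationarity. Your final paragraph on covariance summability is correct, but it does not substitute for this missing step.
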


\section{Nested Volterra series }


First we introduce some notation. Let $T \subset \Z $ be a set of integers which is bounded from above, i.e.,  $\sup \{s: s \in T\} < \infty $.
Let
${\cal S}_T $ be a class of finite nonempty subsets  $S  = \{s_1, \dots, s_n \} \subset T, \, s_1 < \dots < s_n, \, n\ge 1$.
Write $|S |$ for the cardinality of $S \subset Z$.
For any
$S = \{s_1, \dots, s_n\} \in {\cal S}_T, \, S' = \{s'_1, \dots, s'_m \} \in {\cal S}_T,
$ the notation
$S \prec S' $ means that $m= n+1 $ and $s_1= s'_1, \dots, s_n =s'_n < s'_{n+1} = s'_m $. In particular,
$S \prec S' $ implies $S \subset S'$ and $|S'\setminus S| = 1$.  Note that $\prec$ is not a partial order in
${\cal S}_T $ since $S \prec S', \, S'\prec S'' $ do not imply $S \prec S'' $.
A set $ S \in {\cal S}_T $ is said {\it maximal} if there is no $S' \in {\cal S}_T $ such that
$S \prec S'$.
Let ${\cal S}^{\rm max}_T $ denote the class of all maximal elements of ${\cal S}_T. $

\smallskip

\begin{definition}  \label{nested} Let $T$ and
${\cal S}_T $ be as above. Let
${\cal G}_T := \{G_S,   S \in {\cal S}_T  \} $ be a family of measurable functions $G_S = G_{s_1, \dots, s_m}: \R \to \R$
indexed by sets $ S = \{s_1, \dots, s_m \} \in {\cal S}_T $ and such that
$G_S =: a_S$ is a constant function for any maximal set $S \in {\cal S}^{\rm max}_T $.
A {\it nested Volterra series} is a sum
\begin{eqnarray}
\hskip-1cmV({\cal G}_T)&=&\sum_{S_1 \in {\cal S}_T: |S_1|=1} \zeta_{S_1} G_{S_1} \Big( \sum_{S_1\prec S_2} \zeta_{S_2\setminus S_1} G_{S_2}
\Big( \dots \zeta_{S_{p-1}\setminus S_{p-2}}\nn \\
&&\hskip3cm  \times G_{S_{p-1}} \Big(\sum_{S_{p-1}\prec S_p }
\zeta_{S_p\setminus S_{p-1}} G_{S_p}
\Big)
\Big) \Big), \label{nestvolt0}
\end{eqnarray}
where the nested summation is taken over all sequences $S_1 \prec S_2 \prec \dots \prec S_p \in {\cal S}^{\rm max}_T, p=1,2,\dots$,
with the convention
$G_S  = a_S, \, S \in {\cal S}^{\rm max}_T, $ and  $\zeta_S := \zeta_s $ for $S = \{s\}, |S|=1.$

In particular, when ${\cal S}_T = \{ S: S \subset T\} $ is the class of all subsets of $T$,
(\ref{nestvolt0}) can be rewritten as
\begin{eqnarray}
\hskip-.5cm V({\cal G}_T)&=&\sum_{s_1 \in  T} \zeta_{s_1} G_{s_1} \Big( \sum_{s_1< s_2 \in T} \zeta_{s_2} G_{s_1,s_2}
\Big( \dots \zeta_{s_{p-1}} \nn \\
&&\hskip3cm \times G_{s_1,\dots,s_{p-1}}\Big(\sum_{s_{p-1} < s_p \in T} \zeta_{s_p} G_{s_1, \dots, s_p}
\Big) \Big) \Big), \label{nestvolt}
\end{eqnarray}
where the last sum is taken over all $s_p > s_{p-1}$ such that  $\{s_1, \dots, s_p\} \in {\cal S}^{\rm max}_T.$
\end{definition}

The following example clarifies the above definition and its relation to the usual Volterra series
(\cite{ref5}, p.22).

\smallskip

\begin{example} \label{exnest} Let $ T(t) = (-\infty, t] \cap \Z, \, t \in \Z$  and ${\cal S}_{T(t)} $ be the class of all subsets
$S = \{s_1,\dots, s_k \} \subset T(t) $ having $k$ points. Let ${\cal G}_{T(t)} = \{G_S,   S \in {\cal S}_{T(t)}  \} $ be a family of
linear functions
$$
G_{S}(x):=
\begin{cases}
x,&S \in {\cal S}_T, S \not \in {\cal S}^{\rm max}_{T(t)}, \\
a_S = a_{s_1,\dots, s_k}, &S  =\{s_1,\dots, s_k\}  \in {\cal S}^{\rm max}_{T(t)}.
\cr
\end{cases}
$$
Then
\begin{eqnarray} \label{volt}
V({\cal G}_{T(t)})&=&\sum_{s_1 < \dots < s_k  \le   t} a_{s_1, \dots, s_k}  \zeta_{s_1} \zeta_{s_2} \cdots \zeta_{s_k}  \
= \ \sum_{S \subset T, |S| = k} a_S \zeta^S,
\end{eqnarray}
$\zeta^S := \zeta_{s_1} \zeta_{s_2} \cdots \zeta_{s_k},$
is the (usual) Volterra series of order $k$.   The series
in (\ref{volt}) converges in mean square if and only if
\begin{equation} \label{volt1}
A_{T(t)}\ :=\ \sum_{s_1 < \dots < s_k  \le   t} a^2_{s_1, \dots, s_k}  < \infty,
\end{equation}
in which case $\E V^2({\cal G}_{T(t)})= A_{T(t)}, \, \E V({\cal G}_{T(t)}) = 0$.
\end{example}

\begin{proposition}\label{propvolt} Let $ T(t) := (-\infty, t] \cap \Z, \, t \in \Z$ as in Example \ref{exnest}.
Assume that  the system ${\cal G}_{T(t)} = \{G_S,   S \in {\cal S}_{T(t)}\} $ in Definition \ref{nested}
satisfies
the following condition
\begin{equation}\label{GSbdd}
|G_S(x)|^2\ \le \ \begin{cases}
\alpha^2_S + \beta^2_S x^2, &S \in {\cal S}_{T(t)}, \, S \not\in {\cal S}^{\rm max}_{T(t)}, \\
\alpha^2_S(= a^2_S), &S \in {\cal S}^{\rm max}_{T(t)},
\end{cases}
\end{equation}
where $\alpha_S, \beta_S$ are real numbers satisfying
\begin{eqnarray}
{\cal A}_{T(t)}&:=&
\sum_{p \ge 1} \sum_{S_1 \prec S_2 \prec \dots \prec S_p}
\beta^2_{S_1} \beta^2_{S_2} \cdots \beta^2_{S_{p-1}} \alpha^2_{S_p} \ < \ \infty,
 \label{calAt}
\end{eqnarray}
where the inner sums are taken over all sequences
$S_1 \prec S_2 \prec \dots \prec S_p, \, S_i \in {\cal S}_{T(t)}, \, 1\le i \le p$ with
$|S_1|=1 $ and $S_p  \in {\cal S}^{\rm max}_{T(t)}$.

\smallskip

Then, the nested Volterra series
$V({\cal G}_{T(t)})$ in (\ref{nestvolt}) converges in mean square and satisfies
$\E  V({\cal G}_{T(t)})^2 \le {\cal A}_{T(t)}, \, \E  V({\cal G}_{T(t)}) = 0.$ Moreover,
$ X_t :=  V({\cal G}_{T(t)})$ is a projective process with zero mean and coefficients
\begin{eqnarray}\label{gst}
g_{s,t}&:=&
G_{S_1} \big( \sum_{S_1\prec S_2} \zeta_{S_2\setminus S_1} G_{S_2}
\big(\cdots \zeta_{S_{p-1}\setminus S_{p-2}} \nn\\
&&\hskip3cm \times G_{S_{p-1}} \big(\sum_{S_{p-1}\prec S_p }
\zeta_{S_p\setminus S_{p-1}} G_{S_p}
\big) \big) \big)
\end{eqnarray}
if $S_1= \{s\} \in {\cal S}_{T(t)}$, $g_{s,t}:=0$ otherwise,
where the nested summation is defined as in (\ref{nestvolt0}).
\end{proposition}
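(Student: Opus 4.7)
The plan is to construct $V(\mathcal{G}_{T(t)})$ by reverse induction, starting from the maximal (innermost) layer and working outward, and then to establish $L^2$-convergence of the resulting outermost sum over $s_1 \le t$ by orthogonality of the innovations. The key structural fact is that $T(t) = (-\infty, t] \cap \Z$ is bounded above, so any ascending chain $S \prec S_1 \prec S_2 \prec \dots$ in $\mathcal{S}_{T(t)}$ strictly increases its maximum and therefore reaches an element of $\mathcal{S}^{\rm max}_{T(t)}$ (one containing $t$) after at most $t - \max S$ steps. Thus the nested recursion unfolds into a genuinely finite random variable at each level, with no need to approximate by truncation.

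Concretely, for $S \in \mathcal{S}_{T(t)}$ I would define $W_S$ by reverse induction on $\max S$: put $W_S := a_S$ if $S \in \mathcal{S}^{\rm max}_{T(t)}$, and otherwise
$$W_S \ := \ G_S\Bigl( \sum_{S \prec S'} \zeta_{S' \setminus S} W_{S'} \Bigr),$$
the sum running over the finitely many $S' = S \cup \{s'\}$ with $s' \in (\max S, t] \cap \Z$. An easy induction shows $W_S$ is $\mathcal{F}_{[\max S + 1, t]}$-measurable. For each such $S'$, $\zeta_{s'}$ has mean zero and unit variance and is independent of $W_{S'}$ (which depends only on $\zeta_v$ for $v > s'$), so the summands inside $G_S(\cdot)$ are pairwise orthogonal. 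Combined with the growth bound (\ref{GSbdd}), this yields the variance recursion
$$\E W_S^2 \ \le \ \alpha_S^2 + \beta_S^2 \sum_{S \prec S'} \E W_{S'}^2 \quad (S \text{ non-maximal}), \qquad \E W_S^2 \ = \ \alpha_S^2 \quad (S \in \mathcal{S}^{\rm max}_{T(t)}).$$
Iterating this recursion (which terminates after finitely many steps since chains are bounded in length) and summing over $|S| = 1$ bounds $\sum_{s_1 \le t} \E W_{\{s_1\}}^2$ by the chain sum $\mathcal{A}_{T(t)}$, which is finite by hypothesis (\ref{calAt}).

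Next I would define $V := \sum_{s_1 \le t} \zeta_{s_1} W_{\{s_1\}}$. Since $\zeta_{s_1}$ is independent of the $\mathcal{F}_{[s_1 + 1, t]}$-measurable $W_{\{s_1\}}$, the summands are pairwise orthogonal, and the series converges in $L^2$ with $\E V = 0$ and $\E V^2 \le \mathcal{A}_{T(t)}$. Unfolding the inner recursion for $W_{\{s_1\}}$ recovers exactly the formal nested Volterra expression (\ref{nestvolt}), so $V = V(\mathcal{G}_{T(t)})$. Setting $g_{s,t} := W_{\{s\}}$ matches the explicit formula (\ref{gst}), and the measurability of $W_{\{s\}}$ w.r.t.\ $\mathcal{F}_{[s+1, t]}$ together with $\E X_t = 0$ identifies $X_t$ as a projective process in the sense of Definition \ref{propro}.

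The main technical obstacle is the bookkeeping in the iterated variance recursion: each non-maximal $S$ contributes both an $\alpha_S^2$ ``stopping'' term and a $\beta_S^2$-scaled continuation, so a naive unrolling produces contributions indexed by chains ending either at maximal or at non-maximal sets, and some care is needed to re-group these against the chain sum $\mathcal{A}_{T(t)}$ as defined in (\ref{calAt}). Once this accounting is done, the remaining ingredients — the recursive construction, the orthogonality argument for the outer series, and the identification with (\ref{nestvolt}) and (\ref{gst}) — are essentially formal.
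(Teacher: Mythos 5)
Your proposal is correct and is essentially the paper's own argument: the paper likewise takes the coefficients $g_{s,t}$ given by the nested recursion (\ref{gst}) (your $W_{\{s\}}$), notes their ${\cal F}_{[s+1,t]}$-measurability, and bounds $\sum_{s\le t}\E g^2_{s,t}$ by recurrent application of (\ref{GSbdd}) combined with the orthogonality of the terms $\zeta_{S'\setminus S}G_{S'}(\cdots)$, then sums the orthogonal series $\sum_{s\le t}\zeta_s g_{s,t}$ exactly as you do. The bookkeeping point you flag (the unrolled bound is a sum over chains ending at arbitrary, not only maximal, sets) is not an additional difficulty of your route: the paper's proof performs the same unrolling and simply identifies the resulting chain sum with ${\cal A}_{T(t)}$, so at that step your treatment coincides with (indeed is more candid than) the paper's.
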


\begin{proof}
Clearly, the coefficients $g_{s,t}$ in (\ref{gst}) satisfy the measurability condition (i) of Definition
\ref{propro}. Condition (ii) for these coefficients
follows by recurrent application of  (\ref{GSbdd}):
\begin{eqnarray*}
\sum_{s\le t} \E g^2_{s,t}
&=&\sum_{S_1 \in {\cal S}_{T(t)}: |S_1|=1} \E G^2_{S_1} \big( \sum_{S_1\prec S_2} \zeta_{S_2\setminus S_1} G_{S_2}
(\dots ) \big)\\
&\le& \sum_{S_1 \in {\cal S}_{T(t)}: |S_1|=1} \big( \alpha_{S_1}^2 + \beta_{S_1}^2\E \big(\sum_{S_1\prec S_2} \zeta_{S_2\setminus S_1} G_{S_2}( \dots ) \big)^2\big) \nn \\
&\le& \sum_{S_1 \in {\cal S}_{T(t)}: |S_1|=1} \big( \alpha_{S_1}^2 + \beta_{S_1}^2 \sum_{S_1\prec S_2} \big( \alpha_{S_2}^2+\beta_{S_2}^2 \E \big(\sum_{S_2\prec S_3 } \zeta_{S_3\setminus S_2} G_{S_3}(\dots )
\big)^2 \big) \big)\nn \\
&\le&\sum_{S_1 \in {\cal S}_{T(t)}: |S_1|=1}  \Big(\alpha_{S_1}^2+ \beta_{S_1}^2 \sum_{S_1\prec S_2} \alpha_{S_2}^2+
\beta_{S_1}^2 \sum_{S_1\prec S_2\prec S_3} \beta_{S_2}^2 \alpha_{S_3}^2  +\dots \Big) \nn \\
&=& \sum_{p\ge 1} \sum_{S_1\prec S_2\prec \dots \prec S_p}\beta_{S_1}^2 \beta_{S_2}^2 \cdots \beta_{S_{p-1}}^2 \alpha_{S_p}^2 = {\cal A}_{T(t)} < \infty.
\end{eqnarray*}
Thus,  $ X_t = \sum_{s\le t} g_{s,t} \zeta_s$ is a well-defined projective process and
$X_t =  V({\cal G}_{T(t)})$. \hfill $\Box$
\end{proof}

\smallskip

\begin{remark} \label{rem4} {\rm In the case of a usual Volterra series in (\ref{volt}), condition
(\ref{GSbdd}) is satisfied with $\alpha_S =0, \beta_S =1 $ for $S \in {\cal S}_{T(t)}, \, S \not\in {\cal S}^{\rm max}_{T(t)}, $
and the sums ${\cal A}_{T(t)}$ of (\ref{calAt}) and $A_{T(t)}$ of (\ref{volt1}) coincide:
${\cal A}_{T(t)} = A_{T(t)}. $ This fact confirms that 
condition
(\ref{calAt}) for the convergence of nested Volterra series cannot be generally improved.}
\end{remark}


\section{Projective stochastic equations}

Let $Q_{s,t} = Q_{s,t} (x_{u,v}, s < u \le v  \le t), \, s, t \in \Z,  s < t$ be some given measurable deterministic functions
depending on $(t-s)(t-s+1)/2 $ real variables $x_{u,v},\, s <t,$ and  $\mu_t, \, Q_{t,t}, \, t \in \Z$ be some given constants. A {\it projective stochastic equation} has the form
\begin{eqnarray}\label{proSE}
X_t&=&\mu_t + \sum_{s\le t} \zeta_s Q_{s,t}(\E_{[u,v]} X_v,  s < u \le v \le t).
\end{eqnarray}

\begin{definition}
By {\it solution of (\ref{proSE})} we mean a projective process $\{ X_t, t \in \Z \}$
satisfying 
$$ \sum_{s\le t}\E [ Q^2_{s,t}(\E_{[u,v]}X_v, s< u \le v \le t)] \ < \ \infty $$
and (\ref{proSE}) for any $t \in \Z$.
\end{definition}

\begin{proposition} \label{proseGen} Assume that that $\mu_t = \mu$ does not depend on $t \in \R,$
 the functions $Q_{s,t} = Q_{t-s}, \, s\le t$ in (\ref{proSE}) depend only on $t-s,$ and that
$\{ X_t \}$ is a solution of (\ref{proSE}).
Then $\{ X_t \}$ is a projective moving average of (\ref{bernoulli}) with $\E X_t = \mu$ and
$g_n: \R^n \to \R, \, n = 0,1, \dots$ defined recursively by
\begin{equation}\label{grecurs}
g_{n}(x_{-n+1}, \dots, x_0) := Q_n\Big(\mu+ \sum_{k=u}^v x_k g_{v-k}(x_{u+1}, \dots, x_v), -n< u \le v \le 0\Big), \quad n \ge 1.
\end{equation}
$g_0 := Q_0.$ Moreover, such a solution is unique.
\end{proposition}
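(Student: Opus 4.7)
\emph{Proof plan.} The plan is to use Proposition \ref{propunique} to convert the stochastic equation (\ref{proSE}) into a deterministic recursion on nonrandom kernels, and then to induct on the lag $n=t-s$ in order to identify these kernels with the functions $g_n$ defined by (\ref{grecurs}).

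First I would observe that, since $\{X_t\}$ is by hypothesis a projective process with coefficients $g_{s,t}$, while the right-hand side of (\ref{proSE}) exhibits $X_t$ in the form
$$X_t \;=\; \mu + \sum_{s\le t}\zeta_s\, h_{s,t}, \qquad h_{s,t}\;:=\;Q_{t-s}\bigl(\E_{[u,v]}X_v,\; s<u\le v\le t\bigr),$$
with each $h_{s,t}$ manifestly ${\cal F}_{[s+1,t]}$-measurable (because $\E_{[u,v]}X_v$ is ${\cal F}_{[u,v]}\subseteq{\cal F}_{[s+1,t]}$-measurable when $s<u\le v\le t$) and $\sum_{s\le t}\E h_{s,t}^2<\infty$ by the definition of a solution, the uniqueness assertion of Proposition \ref{propunique} forces $g_{s,t}=h_{s,t}$ almost surely for every $s<t$; at the diagonal, (\ref{proSE}) gives $g_{t,t}=Q_0=:g_0$. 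Substituting the expansion $\E_{[u,v]}X_v = \mu + \sum_{w=u}^v \zeta_w g_{w,v}$ from (\ref{Eg}) into this identity yields
$$g_{s,t} \;=\; Q_{t-s}\Bigl(\mu + \sum_{w=u}^v \zeta_w g_{w,v},\; s<u\le v\le t\Bigr),$$
in which every coefficient $g_{w,v}$ on the right carries a lag $v-w\le t-s-1$, strictly smaller than $n:=t-s$.

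This last observation turns the problem into an induction on $n$. The base case $n=0$ is just $g_{t,t}=g_0$. For the inductive step, assume $g_{w,v}=g_m(\zeta_{w+1},\ldots,\zeta_v)$ for some nonrandom $g_m\colon\R^m\to\R$ whenever $m=v-w<n$; then the change of variables $x_k:=\zeta_{t+k}$ for $k=-n+1,\ldots,0$ (together with the corresponding relabelling of $u,v,w$) substitutes the inductive hypothesis into the displayed identity and rewrites $g_{s,t}$ as a nonrandom function of $(x_{-n+1},\ldots,x_0)$ given precisely by the right-hand side of (\ref{grecurs}). This simultaneously shows that $g_{s,t}=g_n(\zeta_{s+1},\ldots,\zeta_t)$ and identifies $g_n$, closing the induction and confirming that $\{X_t\}$ is the projective moving average (\ref{bernoulli}) with kernels given by (\ref{grecurs}).

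Uniqueness is then automatic: (\ref{grecurs}) determines the sequence $(g_n)_{n\ge 0}$ unambiguously from $\mu$ and $(Q_n)_{n\ge 0}$, so any two solutions share the same kernels and, through (\ref{bernoulli}), coincide almost surely. The only non-routine step is the initial appeal to Proposition \ref{propunique}; it is precisely what converts the \emph{stochastic} equation (\ref{proSE}) into a purely \emph{deterministic} recursion for the $g_n$, and everything downstream is bookkeeping with indices and a straightforward induction on the lag.
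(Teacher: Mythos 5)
Your proposal is correct and follows essentially the same route as the paper: identify the coefficients $g_{s,t}$ with $Q_{t-s}(\E_{[u,v]}X_v,\,s<u\le v\le t)$ via the uniqueness statement of Proposition \ref{propunique}, expand the conditional expectations using (\ref{Eg}), and induct on the lag $n=t-s$ to obtain $g_{s,t}=g_n(\zeta_{s+1},\dots,\zeta_t)$ with $g_n$ given by (\ref{grecurs}). The uniqueness argument you give (the recursion determines the kernels, hence the solution) is exactly the "follows trivially" step in the paper, just spelled out.
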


\begin{proof}
From (\ref{proSE}) and the uniqueness of (\ref{XMA}) (Proposition \ref{propunique}) we have
$X_t = \mu+ \sum_{s\le t} g_{s,t} \zeta_s, $ where $g_{s,t} = Q_{t-s}(\E_{[u,v]} X_v,  s < u \le v \le t).$
For $s = t$ this yields $g_{t,t} = Q_0 = g_0 \, \forall t \in \Z$ as in (\ref{grecurs}). Similarly,
$g_{t-1,t} = Q_{1}(\E_{[t,t]} X_t) =  Q_{1}(\mu+ g_0 \zeta_t) = g_1(\zeta_t)$, where $g_1$ is defined in
(\ref{grecurs}). Assume by induction that
\begin{equation}\label{ginduct}
g_{t-m, t} = g_m(\zeta_{t-m+1}, \dots, \zeta_t),  \qquad \forall \  t \in \Z
\end{equation}
with $g_m$ defined in (\ref{grecurs}),
hold for any $0\le  m < n$ and some
$n \ge 1$; we need to show that (\ref{ginduct}) holds  for $m=n$, too.  Using (\ref{ginduct}), (\ref{Eg}) and
(\ref{grecurs}) we
obtain
\begin{eqnarray*}
g_{t-n,t}&=&Q_{n}(\E_{[u,v]} X_v,  t-n < u \le v \le t) \\
&=&Q_n\Big(\mu+\sum_{k=u}^v \zeta_k g_{v-k}(\zeta_{u+1}, \dots, \zeta_v), t-n< u \le v \le t\Big) \\
&=&g_n(\zeta_{t-n+1}, \dots,  \zeta_t).
\end{eqnarray*}
This proves the induction step $n-1 \to n$ and hence the proposition, too, since
the uniqueness follows trivially. \hfill $\Box$
\end{proof}

\smallskip

Clearly, the choice of possible kernels $Q_{s,t}$ in (\ref{proSE}) is very large. In this paper
we focus on the following class of projective stochastic equations:
\begin{eqnarray}\label{proSEI}
X_t &=&\mu+ \sum_{s\le t} \zeta_s Q\Big(\alpha_{t-s}+   \sum_{s<u\le t}  \beta_{t-u, u-s} \left(\E_{[u,t]} X_{t} - \E_{[u+1,t]} X_{t}\right) \Big),
\end{eqnarray}
where $\{\alpha_i, i \ge 0\}, \ \{\beta_{i,j},  i \ge 0, j\ge 1 \} $ are given arrays of real numbers, $\mu \in \R$ is a constant,
and $Q = Q(x)$ is a measurable function
of a single variable $x \in \R$.  Two modifications of (\ref{proSEI}) are briefly discussed below, see
(\ref{proSEII}) and (\ref{proSE0}).
Particular cases of (\ref{proSEI}) are
\begin{eqnarray}\label{proSE1}
X_t &=&\sum_{s\le t} \zeta_s Q\Big(\alpha_{t-s}+   \beta_{t-s} \E_{[s+1,t]} X_t \Big),
\end{eqnarray}
and
\begin{eqnarray}\label{proSE2}
X_t &=&\mu + \sum_{s\le t} \zeta_s Q\Big(\alpha_{t-s}+ \sum_{s<u\le t}  \beta_{u-s}
\left(\E_{[u,t]} X_t - \E_{[u+1,t]} X_t\right) \Big),
\end{eqnarray}
corresponding to  $\beta_{i,j} = \beta_{i+j}$  and
$\beta_{i,j} = \beta_{j},$ respectively.

\smallskip

Next, we study the solvability of projective equation (\ref{proSEI}).
We assume that $Q$
satisfies the following dominating bound: there exists a constant $c_Q >0$ such that
\begin{eqnarray}
|Q(x)|&\le&c_Q|x|, \qquad \forall \ x \in {\R}. \label{Qdom}
\end{eqnarray}
Denote
\begin{eqnarray}
K_Q
&:=&\sum_{i=0}^\infty \alpha_i^2 \sum_{k=0}^\infty c^{2k+2}_Q   \sum_{j_1=1}^\infty \beta^2_{i,j_1} \cdots
\sum_{j_k=1}^\infty \beta^2_{i+ j_1 + \dots + j_{k-1}, j_k}. \label{KQ}
\end{eqnarray}
The main result of this section is the following theorem.

\begin{theorem} \label{thm1} (i) Assume condition (\ref{Qdom}) and
\begin{equation}\label{Kcond}
K_{Q} \ < \ \infty.
\end{equation}
Then there exists a unique solution $\{ X_t \} $ of (\ref{proSEI}), which is  written as
a projective moving average in (\ref{XMA})
with coefficients $g_{t-k,t}$ recursively defined as
\begin{eqnarray}\label{giter}
g_{t-k,t}&:=&\begin{cases}
Q\big(\alpha_k+ \sum_{i=0}^{k-1} \beta_{i,k-i} \zeta_{t-i} g_{t-i,t}  \big), &k =1,2, \dots, \\
Q(\alpha_k), &k=0.
\end{cases}
\end{eqnarray}
The above solution is represented by the following nested Volterra series
\begin{eqnarray*}
\hskip-.5cm X_t&=&\mu + \sum_{s_1 \le  t} \zeta_{s_1} G_{s_1} \Big( \sum_{s_1< s_2 \le t} \zeta_{s_2} G_{s_1,s_2}
\Big( \dots
\zeta_{s_{p-1}} G_{s_1,\dots,s_{p-1}}\Big(\sum_{s_{p-1} < s_p \le t} \zeta_{s_p} G_{s_1, \dots, s_p}
\Big) \Big) \Big),
\end{eqnarray*}
where
\begin{eqnarray*}
G_S(x)&=&
\begin{cases}
Q(\alpha_0), &S = \{t \}, \\
Q(\alpha_{t-s} + x), &S = \{s \},  \  s <t,  \\
\beta_{t-s_{k}, s_{k}-s_{k-1}}Q (\alpha_{t-s_k}),   &S = \{s_1, \dots ,s_k \}, \ s_1 < \dots < s_k = t,\, k>1, \\
\beta_{t-s_{k}, s_{k}-s_{k-1}}Q (\alpha_{t-s_k}+x),   &S = \{s_1, \dots ,s_k \}, \ s_1 < \dots < s_k < t, \, k>1.
\end{cases}
\end{eqnarray*}
More explicitly,
\begin{eqnarray*}
X_t&=&\mu + Q(\alpha_0) \zeta_t + Q\big(\alpha_1 + \beta_{0,1} \zeta_t Q(\alpha_{0})\big)\zeta_{t-1} \\
&+&Q\Big(\alpha_2 + \beta_{0,2}\zeta_t Q(\alpha_{0}) + \beta_{1,1}\zeta_{t-1} Q\big(\alpha_1 +\beta_{0,1} \zeta_t Q(\alpha_{0}) \big)  \Big)\zeta_{t-2} + \dots.
\end{eqnarray*}

\noi (ii) In the case of linear function $Q(x) = c_Q x$, condition (\ref{Kcond}) is also necessary
for the existence of a solution of (\ref{proSEI}).

\end{theorem}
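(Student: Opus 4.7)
The plan is to construct the solution directly from the recursion (\ref{giter}), control $\sum_{k}\E g_{t-k,t}^{2}$ via an orthogonality argument, and then recover the nested Volterra expansion by iterative unfolding of the recursion. For part (ii), linearity turns the key inequality into an equality, yielding necessity.

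For part (i), I would first define $g_{t-k,t}$ recursively by (\ref{giter}). A straightforward induction on $k$ shows that $g_{t-k,t}$ is ${\cal F}_{[t-k+1,t]}$-measurable, so condition (i) of Definition \ref{propro} is in place. The central $L^{2}$ estimate rests on the orthogonality: for distinct $i<i'$ in $\{0,\ldots,k-1\}$, conditioning on ${\cal F}_{[t-i'+1,t]}$ and using $\E\zeta_{t-i'}=0$ gives $\E[\zeta_{t-i}g_{t-i,t}\zeta_{t-i'}g_{t-i',t}]=0$. Combined with the dominating bound (\ref{Qdom}), this yields
\begin{equation*}
\E g_{t-k,t}^{2}\;\le\;c_{Q}^{2}\alpha_{k}^{2}+c_{Q}^{2}\sum_{i=0}^{k-1}\beta_{i,k-i}^{2}\,\E g_{t-i,t}^{2}.
\end{equation*}
Iterating this inequality $p$ times produces a sum indexed by $0\le i_{p}<\dots<i_{1}<i_{0}=k$ with weight $c_{Q}^{2p+2}\alpha_{i_{p}}^{2}\prod_{\ell=1}^{p}\beta_{i_{\ell},i_{\ell-1}-i_{\ell}}^{2}$; after the substitution $n=i_{p}$, $j_{\ell}=i_{p-\ell}-i_{p-\ell+1}\ge 1$, and finally summing over $k$, this reproduces exactly the expression (\ref{KQ}) for $K_{Q}$. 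Thus $\sum_{s\le t}\E g_{s,t}^{2}\le K_{Q}<\infty$, so $X_{t}:=\mu+\sum_{s\le t}\zeta_{s}g_{s,t}$ converges in $L^{2}$ and is a projective moving average.

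To verify that this $X_{t}$ solves (\ref{proSEI}), I would plug the identity $\E_{[u,t]}X_{t}-\E_{[u+1,t]}X_{t}=\zeta_{u}g_{u,t}$ from Proposition \ref{propunique} into the right-hand side of (\ref{proSEI}) and read off (\ref{giter}). Uniqueness is inherited from Proposition \ref{proseGen}, since the recursion determines the coefficients from $Q$ and $\{\alpha_{i},\beta_{i,j}\}$ alone. The nested Volterra representation then follows by iterated substitution: replacing each factor $g_{u,t}$ in (\ref{giter}) by the same recursion generates, at level $m$, a factor $\beta_{t-s_{m},s_{m}-s_{m-1}}Q(\alpha_{t-s_{m}}+\cdots)$, which is exactly the $G_{S}$ prescribed in the statement; convergence of the unfolding to $X_{t}$ in $L^{2}$ is guaranteed by the bound $\sum_{s\le t}\E g_{s,t}^{2}\le K_{Q}$ already established.

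For part (ii), with $Q(x)=c_{Q}x$ the recursion (\ref{giter}) is exactly linear, so the cross-term orthogonality above converts the inequality into the equality $\E g_{t-k,t}^{2}=c_{Q}^{2}\alpha_{k}^{2}+c_{Q}^{2}\sum_{i<k}\beta_{i,k-i}^{2}\E g_{t-i,t}^{2}$. Iterating and applying the same change of variables yields $\E X_{t}^{2}=\sum_{k\ge 0}\E g_{t-k,t}^{2}=K_{Q}$, so any $L^{2}$ solution, whose coefficients must obey the recursion by Proposition \ref{proseGen}, forces $K_{Q}<\infty$. The main technical obstacle is the combinatorial bookkeeping that identifies the iterated expansion with the explicit sum (\ref{KQ}); a direct appeal to Proposition \ref{propvolt} with the $G_{S}$ of the theorem does not work, since $Q(\alpha_{t-s}+x)$ does not obey the pointwise bound (\ref{GSbdd}), and one must instead exploit the mean-zero property of the inner sums through expectation-based orthogonality.
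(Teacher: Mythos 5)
Your proposal is correct and follows essentially the same route as the paper: define the coefficients by the recursion (\ref{giter}), use the independence/mean-zero structure of the innovations together with (\ref{Qdom}) to get the recurrent inequality $\E g^2_{t-k,t}\le c^2_Q(\alpha^2_k+\sum_{i<k}\beta^2_{i,k-i}\E g^2_{t-i,t})$, iterate it to bound $\sum_k \E g^2_{t-k,t}$ by $K_Q$, and invoke Proposition \ref{proseGen} for uniqueness and the explicit (nested Volterra) form, with the linear case turning the inequality into the equality $\var(X_t)=K_Q$ for part (ii). Your closing observation that Proposition \ref{propvolt} cannot be applied directly with sharp constants (since bounding $Q(\alpha_{t-s}+x)^2$ pointwise introduces extra factors) is a sensible aside consistent with the paper's choice of the expectation-based recurrence.
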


\begin{proof}
(i) Let us show
that 
the $g_{k-t,t}$'s as defined in (\ref{giter}) satisfy
$\sum_{k=0}^\infty \E g^2_{t-k,t} < \infty$. From (\ref{Qdom}) and (\ref{giter})
we have the recurrent inequality:
\begin{eqnarray}
\hskip-.3cm \E g^2_{t-k,t}
&\le&c^2_Q \E \Big(\alpha_k+ \sum_{i=0}^{k-1} \beta_{i,k-i} \zeta_{t-i} g_{t-i,t}  \Big)^2 \
=\ c^2_Q \Big(\alpha^2_k+ \sum_{i=0}^{k-1} \beta^2_{i,k-i} \E g^2_{t-i,t}  \Big).
\label{gineq}
\end{eqnarray}
Iterating (\ref{gineq}) we obtain
\begin{eqnarray}
\hskip-.4cm \E g^2_{t-k,t}
&\le&c^2_Q \Big(\alpha^2_k+  c^2_Q\sum_{i=0}^{k-1} \beta^2_{i,k-i} \Big(\alpha^2_i + \sum_{j=0}^{i-1}
\beta^2_{j, i-j} \E g^2_{t-j,t} \Big) \Big) \nn \\
&=&c^2_Q \alpha^2_k + c^4_Q \sum_{i=0}^{k-1} \alpha^2_i \beta^2_{i,k-i} + c^6_Q \sum_{i=0}^{k-1} \alpha^2_i
\sum_{j_1=1}^{k-1-i} \beta^2_{i,j_1} \beta^2_{i+j_1, k-i-j_1} + \dots \label{gkiter}
\end{eqnarray}
and hence
\begin{eqnarray}
\sum_{k=0}^\infty \E g^2_{t-k,t}
&\le&c^2_Q \sum_{i=0}^\infty \alpha^2_i + c^4_Q \sum_{i=0}^\infty \alpha^2_i \sum_{j_1=1}^\infty \beta^2_{i,j_1}
+ c^6_Q \sum_{i=0}^\infty \alpha^2_i \sum_{j_1=1}^\infty \beta^2_{i,j_1} \sum_{j_2=1}^\infty \beta^2_{i+j_1, j_2} + \dots \nn \\
&=&K_{Q} \ < \ \infty \label{Kineq}
\end{eqnarray}
according to (\ref{Kcond}). Therefore, $X_t = \mu + \sum_{s\le t} g_{s,t} \zeta_s$   is a well-defined projective moving-average.
The remaining statements about $X_t$ follow from Proposition \ref{proseGen}.

\smallskip

\noi (ii) Similarly to (\ref{gineq}), (\ref{Kineq})  in the case $Q(x) = c_Q x$ we obtain
\begin{eqnarray*}
\E g^2_{t-k,t}
&=&c^2_Q \E \Big(\alpha_k+ \sum_{i=0}^{k-1} \beta_{i,k-i} \zeta_{t-i} g_{t-i,t}  \Big)^2 \
=\  c^2_Q \Big(\alpha^2_k+ \sum_{i=0}^{k-1} \beta^2_{i,k-i} \E g^2_{t-i,t}  \Big)
\end{eqnarray*}
and hence ${\rm Var}(X_t) = \sum_{k=0}^\infty \E g^2_{t-k,t} = K_Q $. This proves  (ii) and
Theorem \ref{thm1}, too. \hfill $\Box$
\end{proof}

\smallskip

\smallskip

\smallskip

In the case of equations (\ref{proSE1}) and (\ref{proSE2}),
condition (\ref{Kcond}) can be simplified, see below. Note that for $ A^2 := \sum_{i=0}^\infty \alpha^2_i = 0$,
equations (\ref{giter}) admit a trivial solution $g_{t-k,t} = 0 $ since $Q(0) = 0$ by (\ref{Qdom}), leading
to the constant process $X = \mu $ in (\ref{proSEI}).

\begin{proposition}\label{propBeta}
(i) Let $A^2 >0, $
$\beta_{i,j} = \beta_{i+j}, \, i\ge 0, \ j \ge 1, $
and $B^2 := \sum_{i=0}^\infty \beta^2_i$. Then
$K_Q<\infty $ is equivalent to $A^2 < \infty $ and $ B^2 < \infty $.

\smallskip

\noi (ii) Let $A^2 >0$,
$\beta_{i,j} = \beta_{j}, \, i\ge 0, \ j \ge 1$
and $B^2 := \sum_{i=1}^\infty \beta^2_i$.
Then
$K_Q <\infty $ is equivalent to $A^2 < \infty $ and $c^2_Q B^2 < 1 $. Moreover,
$K_Q = c^2_Q A^2/(1 - c^2_Q B^2).$

\end{proposition}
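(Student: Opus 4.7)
The plan is to evaluate (or bound) $K_Q$ explicitly under the two specializations. Part (ii) is immediate: with $\beta_{i,j}=\beta_j$, each factor inside (\ref{KQ}) reduces to $\beta^2_{j_l}$, so the $k$-fold inner sum decouples into $(B^2)^k$ and
\begin{equation*}
K_Q \ =\ c_Q^2\sum_{i=0}^\infty \alpha_i^2 \sum_{k=0}^\infty (c_Q^2 B^2)^k.
\end{equation*}
The geometric series converges iff $c_Q^2 B^2<1$, and combined with $A^2<\infty$ this gives $K_Q=c_Q^2 A^2/(1-c_Q^2 B^2)$.

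For part (i) the key substitution is $n_l:=i+j_1+\dots+j_l$. Under $\beta_{i,j}=\beta_{i+j}$ the factor $\beta^2_{i+j_1+\dots+j_{l-1},j_l}$ becomes $\beta^2_{n_l}$, and the constraints $j_l\ge 1$ translate into $i<n_1<n_2<\dots<n_k$. Hence the inner nested sum equals the $k$-th elementary symmetric polynomial in $(\beta^2_n)_{n>i}$, and the generating-function identity
\begin{equation*}
\sum_{k=0}^\infty c_Q^{2k}\sum_{i<n_1<\dots<n_k}\beta^2_{n_1}\cdots\beta^2_{n_k}\ =\ \prod_{n>i}(1+c_Q^2\beta^2_n)
\end{equation*}
yields the closed form $K_Q=c_Q^2\sum_{i\ge 0}\alpha_i^2\prod_{n>i}(1+c_Q^2\beta^2_n)$.

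Both implications then follow quickly. If $A^2<\infty$ and $B^2<\infty$, then $\sum_n\log(1+c_Q^2\beta_n^2)<\infty$ (using $\log(1+x)\le x$), so the infinite product is bounded uniformly in $i$ and $K_Q\le c_Q^2 A^2\prod_{n\ge 1}(1+c_Q^2\beta_n^2)<\infty$. For the converse the closed form is not actually needed: the $k=0$ term in (\ref{KQ}) gives $K_Q\ge c_Q^2 A^2$, hence $A^2<\infty$; choosing any $i_0$ with $\alpha_{i_0}\ne 0$ (which exists because $A^2>0$), the $k=1$ term at that $i_0$ yields $K_Q\ge c_Q^4\alpha_{i_0}^2\sum_{n>i_0}\beta_n^2$, forcing $B^2<\infty$. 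The only mildly delicate step in the whole argument is recognizing the nested sum in (i) as an elementary symmetric polynomial under the change of variables $n_l=i+j_1+\dots+j_l$; once that is in hand, everything else reduces to summing a geometric series or controlling an infinite product.
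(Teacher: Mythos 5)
Your proof is correct, and part (ii) coincides with the paper's computation (the inner sums decouple into a geometric series). For part (i) you take a genuinely different, slightly sharper route. Both arguments begin with the same relabelling $n_l=i+j_1+\dots+j_l$, which turns the nested sum into a sum over strictly increasing indices $i<n_1<\dots<n_k$; but from there the paper only \emph{bounds} the $k$-th inner sum by $A^2B_1^2\cdots B_k^2$ with $B_k^2:=\sum_{j\ge k}\beta_j^2$, and then exploits $B_k^2\to 0$ to dominate the tail of the series over $k$ by a geometric series with arbitrarily small ratio. You instead recognize the inner sum as the $k$-th elementary symmetric function of $(\beta_n^2)_{n>i}$ and use the product identity $\sum_k c_Q^{2k}e_k=\prod_{n>i}(1+c_Q^2\beta_n^2)$ (valid in $[0,\infty]$ by monotone convergence, since all terms are nonnegative), obtaining the exact formula $K_Q=c_Q^2\sum_{i\ge0}\alpha_i^2\prod_{n>i}(1+c_Q^2\beta_n^2)$ and the explicit bound $K_Q\le c_Q^2A^2\exp(c_Q^2B^2)$. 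Your approach buys a closed form and a quantitative bound where the paper only needs finiteness; the paper's tail-sum argument is more elementary in that it avoids the infinite-product identity. You also spell out the converse (the $k=0$ term forces $A^2<\infty$, and the $k=1$ term at any $i_0$ with $\alpha_{i_0}\ne0$ forces $\sum_{n>i_0}\beta_n^2<\infty$, hence $B^2<\infty$), which the paper dismisses as obvious; your version is complete and uses $c_Q>0$ and $A^2>0$ exactly where needed.
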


\begin{proof}
(i) By definition,
\begin{eqnarray*}
K_{Q}
&=&\sum_{k=0}^\infty c^{2k+2}_Q \sum_{i=0}^\infty \alpha_i^2 \sum_{j_1=1}^\infty \beta^2_{i+j_1} \cdots
\sum_{j_k=1}^\infty \beta^2_{i+ j_1 + \dots + j_{k-1}+ j_k} \\
&=&\sum_{k=0}^\infty c^{2k+2}_Q \sum_{0\le i< j_1 < \dots < j_k< \infty} \alpha_i^2 \beta^2_{j_1} \cdots
\beta^2_{j_k} \nn \\
&\le&\sum_{k=0}^\infty c^{2k+2}_Q A^2 B^2_1 \cdots B^2_k, \nn
\end{eqnarray*}
where $B^2_k := \sum_{j=k}^\infty \beta^2_j.$ Since $B^2 < \infty $ entails $\lim_{k\to \infty} B^2_k = 0$,
$\forall \epsilon>0 \, \exists \, K \ge 1$ such that
$B^2_k < \epsilon/ c^2_Q $ $ \forall \, k  > K$. Hence,
\begin{eqnarray*}
K_{Q}&\le&c^2_Q A^2\Big(\sum_{k=0}^K (c^{2}_Q  B^2)^k + \sum_{k=K}^\infty \epsilon^k \Big) \ < \infty.
\end{eqnarray*}
Therefore, $A^2 < \infty $ and $B^2 < \infty $ imply $K_{Q} < \infty$. The converse implication is
obvious.

\smallskip

\noi (ii) Follows by
\begin{eqnarray*}
K_{Q}
&=&\sum_{k=0}^\infty c^{2k+2}_Q \sum_{i=0}^\infty \alpha_i^2 \sum_{j_1=1}^\infty \beta^2_{j_1} \cdots
\sum_{j_k=1}^\infty \beta^2_{j_k} \
=\  \sum_{k=0}^\infty c^{2k+2}_Q A^2 (B^2)^k \ = \  \frac{c^2_Q A^2}{1 - c^2_Q B^2}.
\end{eqnarray*}
\end{proof}

\smallskip

\begin{remark} \label{rem5} {\rm It is not difficult to show that conditions on the $\beta_{i,j}$'s
in Proposition \ref{propBeta} (i) and (ii) are part of the following more general
condition: $\limsup_{i\to \infty} \sum_{j=1}^\infty c_Q^2 \beta^2_{i,j} < 1, $ which also guarantees that
$K_Q < \infty $.}
\end{remark}

\smallskip

The following Proposition \ref{propmom1} obtains a sufficient condition for the existence
of higher moments $\E |X_t|^p < \infty, p > 2$ of the solution of projective
equation (\ref{proSEI}). The proof of Proposition \ref{propmom1} is based on a recurrent
use of Rosenthal-type inequality of Proposition \ref{propmom}, which contains an absolute constant
$C_p$ depending only on $p$. For $p\ge 2$, denote
\begin{eqnarray}
K_{Q, p}
&:=&C_p^{2/p}  \sum_{i=0}^\infty \alpha_i^2 \sum_{k=0}^\infty (c_Q C^{1/p}_p \mu^{1/p}_p)^{2k+2}   \sum_{j_1=1}^\infty  \beta^2_{i,j_1} \cdots
\sum_{j_k=1}^\infty \beta^2_{i+ j_1 + \dots + j_{k-1}, j_k}. \label{KQp}
\end{eqnarray}
where (recall) $\mu_p = \E |\zeta_0|^p$. Note
$C_2 = \mu_2 = 1 $, hence  $K_{Q,2} = K_Q $ coincides with (\ref{KQ}).

\begin{proposition} \label{propmom1} Assume conditions of Theorem~\ref{thm1} and
$K_{Q, p}<\infty, $ for some $p\ge 2$.
Then $\E |X_t|^p < \infty $.
\end{proposition}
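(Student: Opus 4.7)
The plan is to reduce to the Rosenthal-type bound of Proposition \ref{propmom} applied recursively along the chain (\ref{giter}). Writing $g_{t-k,t} = Q(\alpha_k + Z_k)$ with
$$
Z_k\ :=\ \sum_{i=0}^{k-1} \beta_{i,k-i}\, \zeta_{t-i}\, g_{t-i,t},
$$
the key observation is that $\alpha_k + Z_k$ is itself a projective r.v.\ in the sense of Definition \ref{propro}: indeed $g_{t-i,t}$ is ${\cal F}_{[t-i+1,t]}$-measurable by construction, so setting $g''_{t-i,t} := \beta_{i,k-i}\, g_{t-i,t}$ for $0 \le i \le k-1$ and $g''_{s,t} := 0$ otherwise yields a valid expansion $\alpha_k + Z_k = \alpha_k + \sum_{s\le t} g''_{s,t}\,\zeta_s$ with mean $\alpha_k$. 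Proposition \ref{propmom} therefore gives
$$
\E|\alpha_k + Z_k|^p \ \le\ C_p \Big(|\alpha_k|^p + \mu_p \Big(\sum_{i=0}^{k-1} \beta_{i,k-i}^2\, M_i\Big)^{p/2}\Big),
$$
where $M_i := (\E|g_{t-i,t}|^p)^{2/p}$. Combining with $|Q(x)| \le c_Q|x|$ and raising the resulting inequality to the power $2/p \le 1$ (using the subadditivity $(A+B)^{2/p} \le A^{2/p} + B^{2/p}$), I obtain the clean scalar recursion
$$
M_k \ \le\ c_Q^2\, C_p^{2/p}\, \alpha_k^2 \ +\ c_Q^2\,(C_p\mu_p)^{2/p}\, \sum_{i=0}^{k-1} \beta_{i,k-i}^2\, M_i.
$$

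An easy induction on $k$, starting from $M_0 = |Q(\alpha_0)|^2 \le c_Q^2\,\alpha_0^2 < \infty$, shows that each $M_k$ is finite. Iterating the recursion exactly as in (\ref{gineq})--(\ref{Kineq}) and reindexing $k - j_1 - \dots - j_m \mapsto i$ inside the $m$-th iterated sum, one obtains
$$
\sum_{k=0}^\infty M_k \ \le\ \sum_{m=0}^\infty \big(c_Q^2\, C_p^{2/p}\big)^{m+1}\! \big(\mu_p^{2/p}\big)^m \sum_{i=0}^\infty \alpha_i^2 \sum_{j_1=1}^\infty \beta_{i,j_1}^2 \cdots \sum_{j_m=1}^\infty \beta_{i+j_1+\dots+j_{m-1},\,j_m}^2,
$$
which is exactly $K_{Q,p}/(C_p^{2/p}\mu_p^{2/p})\le K_{Q,p}<\infty$ (using $C_p\ge 1$ from Rosenthal's inequality and $\mu_p\ge 1$ by Jensen applied to $\zeta_0^2$). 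A final application of Proposition \ref{propmom} to the projective moving-average representation $X_t = \mu + \sum_{s\le t} g_{s,t}\,\zeta_s$ furnished by Theorem \ref{thm1} then yields $\E|X_t|^p \le C_p\big(|\mu|^p + \mu_p(\sum_k M_k)^{p/2}\big) < \infty$, which is the desired conclusion.

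The main obstacle will be making the iteration rigorous: iterating the scalar recursion $n$ times leaves a remainder of the form $(c_Q^2 C_p^{2/p})^n (\mu_p^{2/p})^n \sum \beta^2 \cdots \beta^2\, M_j$ that must be handled. Rather than trying to send $n\to\infty$ in the pointwise bound for each $M_k$, the cleanest route is to work with the partial sums $\sum_{k=0}^K M_k$ (finite since every individual $M_k$ is finite), iterate the recursion inside this finite sum, and bound the remainder against the tail of the convergent series defining $K_{Q,p}$; finiteness of $K_{Q,p}$ forces that tail to vanish. The only other delicate step is the bookkeeping required to check that the re-indexed products produced by the iteration line up exactly with the ordered products $\beta_{i,j_1}^2\,\beta_{i+j_1,j_2}^2\cdots$ appearing in the definition (\ref{KQp}) of $K_{Q,p}$ — a routine but error-prone matching of indices.
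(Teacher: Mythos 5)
Your argument is correct and follows essentially the same route as the paper: both apply the Rosenthal-type bound of Proposition \ref{propmom} to the projective r.v.\ $\alpha_k+\sum_{i<k}\beta_{i,k-i}\zeta_{t-i}g_{t-i,t}$, combine it with $|Q(x)|\le c_Q|x|$ and the subadditivity $(a+b)^{2/p}\le a^{2/p}+b^{2/p}$ to get the same scalar recursion for $M_k=(\E|g_{t-k,t}|^p)^{2/p}$, iterate it exactly as in (\ref{gineq})--(\ref{Kineq}), and finish with one more application of Proposition \ref{propmom} to $X_t$. Your worry about remainders is unnecessary (for each fixed $k$ the expansion terminates after finitely many steps because the indices strictly decrease, and all terms are nonnegative), and your bookkeeping of the constants, giving $\sum_k M_k\le K_{Q,p}/(C_p^{2/p}\mu_p^{2/p})$, is consistent with the paper's stated bound $(\E|X_t|^p)^{2/p}\le K_{Q,p}$.
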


\begin{proof}
The proof is similar to that of Theorem~\ref{thm1} (i).
By Proposition \ref{propmom},
\begin{eqnarray*}
\big(\E \big|X_t\big|^p\big)^{2/p} & \le & C_p^{2/p} \Big( \big|\E X_t\big|^p + \mu_p \big(\sum_{s \le t} \big(\E |g_{s,t}|^p \big)^{2/p} \big)^{p/2}\Big)^{2/p}\ =\  C_p^{2/p}  \mu_p^{2/p}\sum_{s \le t} (\E |g_{s,t}|^p )^{2/p}.
\end{eqnarray*}
Using condition \eqref{Qdom}, Proposition~\ref{propmom} and inequality $(a+b)^q \le a^q+b^q, \, 0<q\le1$ we obtain the following recurrent inequality:
\begin{eqnarray*}
\big(\E |g_{s,t}|^p\big)^{2/p} & \leq & \big(c_Q^p\E \big|\alpha_{t-s}+   \sum\nolimits_{s<u\le t}  \beta_{t-u, u-s} \zeta_u g_{u,t} \big|^p \big)^{2/p} \\
& \leq & c_Q^2 C_p^{2/p} \Big(|\alpha_{t-s}|^p+  \mu_p \big( \sum\nolimits_{s<u\le t} ( |\beta_{t-u, u-s}|^p\,\E |g_{u,t}|^p)^{2/p} \big)^{p/2} \Big)^{2/p}. \\
& \leq & c_Q^2 C_p^{2/p} \Big(|\alpha_{t-s}|^2+  \mu_p^{2/p} \sum_{s<u\le t} \beta_{t-u, u-s}^2 (\E |g_{u,t}|^p)^{2/p}
\Big).
\end{eqnarray*}
Iterating the last inequality as in the proof of Theorem \ref{thm1} we obtain
$(\E |X_t|^p)^{2/p} \le {K}_{Q,p}<\infty$, with $K_{Q,p} $ given in (\ref{KQp}). \hfill $\Box$
\end{proof}

\smallskip

Finally, let us discuss the question when $X_t$ of (\ref{proSEI}) satisfies the weak
dependence condition in (\ref{DedMer}) for the invariance principle.

\begin{proposition} Let $\{X_t \}$ satisfy the conditions of Theorem~\ref{thm1}  and $\Omega(2)$ be defined
in (\ref{DedMer}). Then
\begin{eqnarray}
\Omega(2)
&\le&\sum_{i=0}^\infty |\alpha_i| \sum_{k=0}^\infty c^{k+1}_Q   \sum_{j_1=1}^\infty |\beta_{i,j_1}| \cdots
\sum_{j_k=1}^\infty |\beta_{i+ j_1 + \dots + j_{k-1}, j_k}|. \label{KKQ}
\end{eqnarray}
In particular, if the quantity on the r.h.s. of (\ref{KKQ}) is finite, $\{X_t \}$ satisfies
the functional central limit theorem in (\ref{DedMer1}).

\end{proposition}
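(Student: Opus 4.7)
The plan is to follow the same strategy as the proof of Theorem~\ref{thm1}, but replace the squared ($L^2$) bound by a linear one at the level of the coefficients. Concretely, I would control the quantities $h_k := \|g_{t-k,t}\|$ (which, by Proposition~\ref{proseGen}, do not depend on $t$, so that $\Omega(2)=\sum_{k\ge 0}h_k$) by a linear recursion coming from Minkowski's inequality, iterate it, and then sum over $k$.

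First I would apply the dominating bound (\ref{Qdom}) to the recursion (\ref{giter}), take $L^2$-norms, and invoke Minkowski's inequality. Because $g_{t-i,t}$ depends only on $\zeta_{t-i+1},\ldots,\zeta_t$, it is independent of $\zeta_{t-i}$, and $\E\zeta_{t-i}^2=1$ yields $\|\zeta_{t-i}g_{t-i,t}\|=h_i$. This produces
\[
h_k \ \le \ c_Q|\alpha_k| \,+\, c_Q\sum_{i=0}^{k-1}|\beta_{i,k-i}|\,h_i, \qquad k\ge 1; \qquad h_0 \le c_Q|\alpha_0|.
\]

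Next I would iterate this recursion in the same manner as the passage from (\ref{gineq}) to (\ref{Kineq}). After $p$ unfoldings, the contributions that no longer carry an $h$-term are dominated by chain sums of the form $c_Q^{p+1}|\alpha_{i_p}||\beta_{i_1,k-i_1}|\cdots|\beta_{i_p,i_{p-1}-i_p}|$ ranging over strictly decreasing chains $0\le i_p<i_{p-1}<\cdots<i_1<i_0=k$; by monotone convergence the sum over $p\ge 0$ of all such chain terms is an upper bound for $h_k$. Summing over $k\ge 0$ and changing variables from the ordered chain $(i_p,\ldots,i_1,k)$ to the gaps $i:=i_p$ and $j_r:=i_{p-r}-i_{p-r+1}\ge 1$ identifies each chain with a nested product $c_Q^{p+1}|\alpha_i||\beta_{i,j_1}||\beta_{i+j_1,j_2}|\cdots|\beta_{i+j_1+\cdots+j_{p-1},j_p}|$, reproducing exactly the right-hand side of (\ref{KKQ}). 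Finiteness of that right-hand side forces $\Omega(2)<\infty$, and the Dedecker--Merlev\`ede invariance principle recalled at the end of Section~2 then delivers (\ref{DedMer1}).

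The only delicate point is the passage from the recursion (\ref{giter}) to the linear bound for $h_k$: orthogonality of the backward martingale differences $\zeta_{t-i}g_{t-i,t}$ (used in Theorem~\ref{thm1}) would give Pythagoras and produce $(\sum_i\beta_{i,k-i}^2 h_i^2)^{1/2}$ on the right, which cannot be summed term-by-term to the claimed bound. Minkowski's inequality is less sharp but yields the linear recursion in $h_i$ that the $|\beta|$-form of (\ref{KKQ}) requires; this is precisely why the exponents $c_Q^{2k+2}$ and $\beta^2$ in (\ref{KQ}) are replaced here by $c_Q^{k+1}$ and $|\beta|$. Once this step is secured, the rest is bookkeeping identical (up to those exponent changes) to the one already carried out for $K_Q$.
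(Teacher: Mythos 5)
Your argument is correct, and it reaches the paper's bound by a slightly different (though closely related) route. The paper's proof is a one-liner: it reuses the already iterated squared bound (\ref{gkiter}) from the proof of Theorem \ref{thm1} and applies the subadditivity of the square root, $|\sum_i x_i|^{1/2}\le\sum_i|x_i|^{1/2}$, to the individual chain terms $c_Q^{2p+2}\alpha_i^2\beta_{i,j_1}^2\cdots\beta_{i+j_1+\dots+j_{p-1},j_p}^2$, which turns them into exactly the products $c_Q^{p+1}|\alpha_i||\beta_{i,j_1}|\cdots$ appearing in (\ref{KKQ}); summing over $k$ then gives $\Omega(2)\le$ r.h.s.\ of (\ref{KKQ}). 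You instead linearize first: the triangle inequality in $L^2$ together with (\ref{Qdom}), the independence of $g_{t-i,t}$ from $\zeta_{t-i}$ and $\E\zeta_0^2=1$ give the linear recursion $h_k\le c_Q|\alpha_k|+c_Q\sum_{i<k}|\beta_{i,k-i}|h_i$, whose (finite, since the chains strictly decrease) unfolding and the gap reparametrization yield the same chain sums. Both derivations are valid and give the identical bound; your version avoids invoking (\ref{gkiter}) at the cost of a slightly weaker intermediate estimate (Minkowski in place of orthogonality). One small correction to your closing remark: the orthogonality (Pythagoras) route is not a dead end --- it is precisely what the paper uses; the point is that one iterates fully inside the square, as in (\ref{gkiter}), and only then takes square roots termwise, rather than trying to linearize the one-step bound $(\sum_i\beta_{i,k-i}^2h_i^2)^{1/2}$. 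Also, for the final conclusion (\ref{DedMer1}) recall that the Dedecker--Merlev\`ede proposition is stated for $\mu=0$; this is implicit in the paper's statement as well and costs nothing (center $X_t$ at $\mu$).
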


\begin{proof} Follows from (\ref{gkiter}) and the inequality $|\sum x_i |^{1/2} \le \sum |x_i|^{1/2}$.
\hfill $\Box$

\end{proof}

\section{Examples}

\noi {\it 1. Finitely dependent projective equations.} Consider equation  \eqref{proSEI}, where
$\alpha_i = \beta_{i,j} = 0 $ for all $i > m $ and some $m \ge 0$. Since $Q(0) = 0$, the corresponding equation writes as
\begin{eqnarray}\label{proSEI0}
X_t &=&\mu+ \sum_{t-m< s\le t} \zeta_s Q\Big(\alpha_{t-s}+   \sum_{s<u\le t}  \beta_{t-u, u-s} \left(\E_{[u,t]} X_{t} - \E_{[u+1,t]} X_{t}\right) \Big),
\end{eqnarray}
where the r.h.s. is ${\cal F}_{[t-m+1,t]}$-measurable. In particular, $\{X_t\}$ of \eqref{proSEI0} is an $m$-dependent process.
We may ask if the above process can be represented as a moving-average of length $m$ w.r.t. to some i.i.d. innovations?
In other words,
if there exists an i.i.d. standardized sequence
$\{\eta_s, s\in \Z\}$ and coefficients $c_j, 0\le j < m$ such that
\begin{equation}\label{Xma}
X_t = \sum_{t-m < s \le t} c_{t-s} \eta_s, \qquad t \in \Z.
\end{equation}

To construct a negative counter-example to the above question,
consider the simple  case of  \eqref{proSEI0} with $m=2$, $\mu = 0$,  $\alpha_1 = 0, \beta_{0,1} = 1, Q(\alpha_0) = 1$:
\begin{eqnarray}\label{proSEI00}
X_t&=&\zeta_t Q(\alpha_0) + \zeta_{t-1} Q(\alpha_1 + \beta_{0,1} \E_{[t,t]} X_t) =  \zeta_t + \zeta_{t-1} Q( \zeta_t).
\end{eqnarray}
Assume that $\E Q(\zeta_t)= 0$. Then $\E X_t X_{t-1} = 0, \E X^2_t = 1 + \E Q^2(\zeta_0)$. On the other hand, from
\eqref{Xma}  with $m=2 $
we obtain $0= \E X_t X_{t-1} = c_0 c_1, $  implying that
$\{X_t \}$ is an i.i.d. sequence.

Let us show that the last conclusion contradicts the form of $X_t$ in    \eqref{proSEI00} under general assumptions on $Q$ and the distribution of
$\zeta = \zeta_0$. Assume
that $\zeta$ is symmetric, $ \infty >  \E \zeta^4 > (\E \zeta^2)^2 = 1 $ and $Q$ is antisymmetric. Then
\begin{eqnarray*}
{\rm cov}(X^2_t,X^2_{t-1})
&=&\E Q^2(\zeta)\big\{(\E \zeta^4 -  1) + (\E \zeta^2 Q^2(\zeta) - \E Q^2(\zeta))\big\}.
\end{eqnarray*}
Assume, in addition, that $Q$ is monotone nondecreasing on $[0, \infty)$. Then  $\E \zeta^2 Q^2(\zeta) \ge \E \zeta^2 \E Q^2(\zeta) = \E Q^2(\zeta)$, implying
${\rm cov}(X^2_t,X^2_{t-1}) >0$. As a consequence,
\eqref{proSEI00}  is not a moving average of length 2 in some standardized i.i.d. sequence.

\medskip

\noi {\it 2. Linear kernel $Q$.} For linear kernel $Q(x) = c_Q x $, the solution of (\ref{proSEI}) of Theorem \ref{thm1}
can be written explicitly as $X_t = \mu+ \sum_{k=1}^\infty X^{(k)}_t$, where $X^{(1)}_t = c_Q\sum_{i=0}^\infty \alpha_i \zeta_{t-i} $ is a linear process and \begin{eqnarray*}
X^{(k+1)}_t&=&c_Q^{k+1}\sum_{i=0}^\infty \alpha_i \sum_{j_1, \dots, j_k =1}^\infty
\beta_{i,j_1} \cdots \beta_{i+j_1+ \dots + j_{k-1}, j_k} \zeta_{t-i} \zeta_{t-i-j_1} \cdots \zeta_{t-j_1 - \dots -  j_k}
\end{eqnarray*}
for $k\ge 1$ is a Volterra series of order $k+1$, see (\cite{ref5}, p.22),
which are orthogonal in sense that $\E X^{(k)}_t X^{(\ell)}_s =0, \, t,s \in \Z, \, k,\ell \ge 1,\, k \ne \ell$.

Let $H^2_{(-\infty,t]} \subset L^2_{(-\infty,t]} $ be the subspace spanned by
products $1, \zeta_{s_1} \cdots \zeta_{s_k}, \, s_1 <  \dots < s_k \le t, k\ge 1.$  Clearly, the above Volterra series $X_t, X^{(k)}_t \in H^2_{(-\infty, t]}, \, \forall  t\in \Z $ (corresponding to linear $Q$) 
constitute a very special class of projective processes. For example, the process in
\eqref{proSEI00} cannot be expanded in such series unless $Q$ is linear.  To show the last fact, decompose \eqref{proSEI00} as
$X_t = Y_t + Z_t$, where
$Y_t :=  \zeta_t + \alpha \zeta_{t-1} \zeta_t \in H^2_{(-\infty,t]}, \, \alpha :=  \E \zeta Q(\alpha) $ and
$Z_t := \zeta_{t-1} (Q(\zeta_t)  - \alpha \zeta_t)$ is orthogonal to  $H^2_{(-\infty,t]}, \, Z_t \neq 0,$ 
hence $X_t \not\in H^2_{(-\infty,t]}$.

\medskip

\noi {\it 3. The LARCH model.}
The Linear ARCH (LARCH) model, introduced by Robinson \cite{ref15}, is defined by the equations
\begin{equation}
\label{larch} r_t = \sigma_t \zeta_t, \ \ \ \sigma_t= \alpha + \sum_{j= 1}^\infty \beta_j r_{t-j},
\end{equation}
where $\{\zeta_t\}$ is a standardized i.i.d. sequence, and the coefficients $\beta_j$ satisfy $B := \Big\{\sum_{j=1}^\infty
\beta^2_j \Big\}^{1/2} < \infty $. The LARCH model was studied in \cite{ref7}, \cite{ref8}, \cite{ref10}), \cite{ref9},
\cite{ref3} and other papers.
In financial modeling, $r_t$ are interpreted as (asset) returns and $\sigma_t$ as  volatilities.
Of particular interest is the case when the $\beta_j$'s in (\ref{larch})
are proportional to
ARFIMA coefficients, in which case it is possible to rigorously prove long memory of the volatility and the
(squared) returns.
It is well-known (\cite{ref9})
that a second order strictly stationary solution $\{r_t\}$ to (\ref{larch}) exists if and only if
\begin{equation} \label{B2}
B \ < \ 1,
\end{equation}
in which case it can be represented by the convergent orthogonal Volterra series
\begin{equation*} \label{orthog}
r_t = \sigma_t\zeta_t, \ \ \ \sigma_t= \alpha\Big(1+\sum_{k= 1}^\infty \sum_{j_1, \dots, j_k= 1}^\infty \beta_{j_1}\cdots\beta_{j_k}
\zeta_{t-j_1}\cdots\zeta_{t-j_1-\dots-j_k}\Big).
\end{equation*}
Clearly, the last series is a particular case of the Volterra series of the previous example. 
We conclude  that under the condition (\ref{B2}),
the volatility process $\{X_t = \sigma_t\}$ of the LARCH model satisfies the projective
equation (\ref{proSE2}) with linear function $Q(x) = x $ and $\alpha_j =  \alpha \beta_j$. Note that
(\ref{B2}) coincides with the condition $c^2_Q B^2 < 1 $ of Proposition \ref{propBeta} (ii) for the existence
of solution of (\ref{proSE2}).

From Proposition  \ref{propmom1} the following new result about the existence of
higher order moments of the LARCH model is derived.

\begin{corollary} Assume that
\begin{equation}\label{larchp}
C_p^{1/p} \mu^{1/p}_p B <  1,
\end{equation}
where $\mu_p = \E |\zeta_0|^p $ and $C_p$ is the absolute constant from Proposition  \ref{propmom}, \,
$p \ge 2$. Then
$\E |r_t|^p  = \mu_p \E |\sigma_t|^p < \infty $. Moreover,
\begin{equation}\label{larchmom}
\E |\sigma_t|^p \ \le  \  \frac{\alpha^2 C_p^{4/p} \mu_p^{2/p} B^2}{1- C_p^{2/p} \mu_p^{2/p} B^2}.
\end{equation}
\end{corollary}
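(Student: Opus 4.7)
The strategy is to apply Proposition~\ref{propmom1} to the identification, carried out just above the corollary, of the LARCH volatility process $\{\sigma_t\}$ as a solution of the projective equation \eqref{proSE2} with linear kernel $Q(x)=x$ (so $c_Q=1$), drift $\mu=\alpha$, and coefficients $\alpha_i = \alpha\beta_i$ (with $\beta_0:=0$, since the LARCH sum starts at $j=1$) and $\beta_{i,j}=\beta_j$ independent of the first index. The finiteness of $\E|r_t|^p$ then follows from $\E|\sigma_t|^p < \infty$ via the independence of $\zeta_t$ and $\sigma_t$, the latter being a function of $\zeta_{t-1},\zeta_{t-2},\ldots$ only (as is visible from its orthogonal Volterra expansion), so that $\E|r_t|^p = \mu_p\, \E|\sigma_t|^p$.

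The key step is the explicit evaluation of $K_{Q,p}$ from \eqref{KQp} in the present setting. Since $\beta_{i,j}=\beta_j$ does not depend on its first argument, every inner sum $\sum_{j_\ell=1}^\infty \beta^2_{\,\cdot\,,j_\ell}$ collapses to $B^2=\sum_{j\ge 1}\beta_j^2$, independently of the running indices. Moreover $A^2 := \sum_i \alpha_i^2 = \alpha^2 B^2$. Pulling these constants out, $K_{Q,p}$ is a geometric series in $k$ with ratio $C_p^{2/p}\mu_p^{2/p}B^2$, and under hypothesis \eqref{larchp} (which is precisely $C_p^{2/p}\mu_p^{2/p}B^2 < 1$) one obtains
\begin{equation*}
K_{Q,p} \;=\; C_p^{4/p}\mu_p^{2/p}\,A^2\sum_{k\ge 0}\bigl(C_p^{2/p}\mu_p^{2/p}B^2\bigr)^k \;=\; \frac{\alpha^2 C_p^{4/p}\mu_p^{2/p}B^2}{1-C_p^{2/p}\mu_p^{2/p}B^2},
\end{equation*}
matching exactly the right-hand side of \eqref{larchmom}. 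Proposition~\ref{propmom1} then yields $\E|\sigma_t|^p<\infty$, and the quantitative bound is read off from the last line of its proof.

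\textbf{Main obstacle.} The computation is essentially mechanical; the only minor point requiring care is that Proposition~\ref{propmom1} is stated and proved for the mean-zero case (its proof discards the $|\E X_t|^p$ term of Rosenthal's inequality), whereas the LARCH volatility has mean $\E\sigma_t=\alpha \ne 0$. This is handled by applying Proposition~\ref{propmom1} to the centered projective moving average $Y_t := \sigma_t - \alpha$, which has the same coefficients $g_{s,t}$ as $\sigma_t$, and then observing that the resulting bound on $(\E|Y_t|^p)^{2/p}$ is exactly the displayed inequality \eqref{larchmom}. The identification of $\alpha_i=\alpha\beta_i$ (in particular $\alpha_0=0$) must also be checked against the Volterra expansion to confirm $A^2=\alpha^2 B^2$, but this is immediate since $\zeta_t$ does not appear in $\sigma_t$.
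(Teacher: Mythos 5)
Your proposal is correct and takes essentially the same route as the paper, whose proof is precisely ``apply Proposition~\ref{propmom1} after noting that for the LARCH model $K_{Q,p}$ of \eqref{KQp} equals the right-hand side of \eqref{larchmom}'' --- i.e.\ the geometric-series evaluation with $c_Q=1$, $\beta_{i,j}=\beta_j$, $A^2=\alpha^2B^2$ that you carry out. Your additional care (centering $\sigma_t-\alpha$ to handle the nonzero mean, and using independence of $\zeta_t$ and $\sigma_t$ to get $\E|r_t|^p=\mu_p\,\E|\sigma_t|^p$) merely fills in details the paper leaves implicit; the only caveat, which applies equally to the paper's own argument, is that Proposition~\ref{propmom1} bounds $(\E|\cdot|^p)^{2/p}$ rather than $\E|\cdot|^p$, so \eqref{larchmom} should be read with that exponent on the left-hand side.
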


\begin{proof} Follows from Proposition  \ref{propmom1} and the easy fact that for the LARCH model,
$K_{Q,p}$ of (\ref{KQp}) coincides with the r.h.s. of
(\ref{larchmom}). \hfill $\Box$
\end{proof}

\smallskip

Condition (\ref{larchp}) can be compared with the sufficient condition for $\E |r_t|^p  < \infty, p = 2,4,\dots $
in (\cite{ref7}, Lemma 3.1):
\begin{equation}\label{larchp1}
(2^p - p -1)^{1/2} \mu^{1/p}_p B <  1.
\end{equation}
Although the best constant $C_p$ in the Rosenthal's inequality is not known,
(\ref{larchp}) seems much weaker than
(\ref{larchp1}), especially when $p$ is large. See, e.g. \cite{ref13}, where
it is shown that  $C^{1/p}_p = O(p/\log p),\, p \to \infty. $

\medskip

\noi {\it 4. Projective "threshold" equations.  } Consider projective equation
\begin{eqnarray}\label{proSET}
X_t &=&\zeta_t +  \sum_{j=1}^p \zeta_{t-j}  Q\big(\E_{[t-j+1,t]} X_{t} \big),
\end{eqnarray}
where $1\le p < \infty$ and
$Q $ is  a bounded measurable function with $Q(0) = 1$.  If $Q$ is a step function:
$Q(x) = \sum_{k=1}^q c_k \1(x \in I_k), $  where $\cup_{k=1}^q I_k = \R$ is a partition of
$\R$ into disjoint intervals $I_k, 1\le k \le q$, the process in \eqref{proSET} follows
different ``moving-average regimes'' in the regions $\E_{[t-j+1,t]} X_{t} \in I_k, 1\le j \le p $ exhibiting
a ``projective threshold effect''.  See Fig. \ref{QTR}, where the left graph shows a trajectory
having a single  threshold at $x=0$ and the right graph a trajectory with  two threshold points at $x=0$ and $x=2$.

\begin{figure}[!t]
	\begin{center}
		\subfigure{ \includegraphics[height=0.20\textwidth]{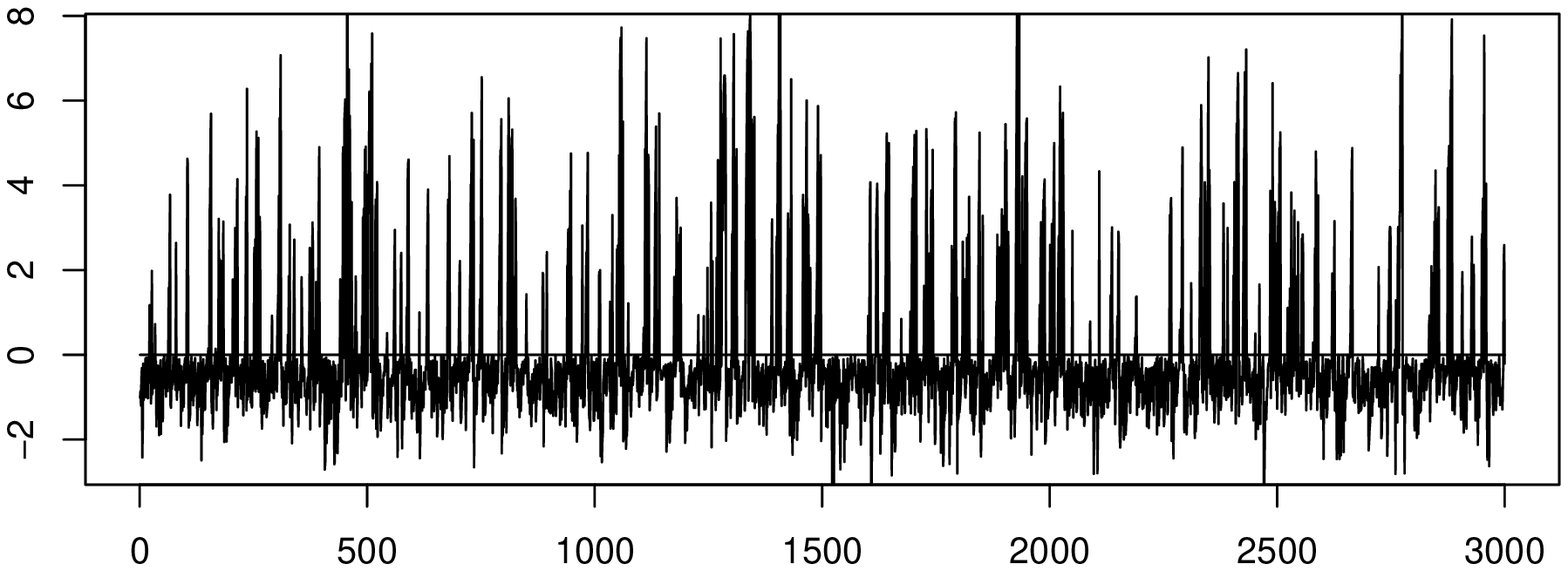}}
		\subfigure{\includegraphics[height=0.20\textwidth]{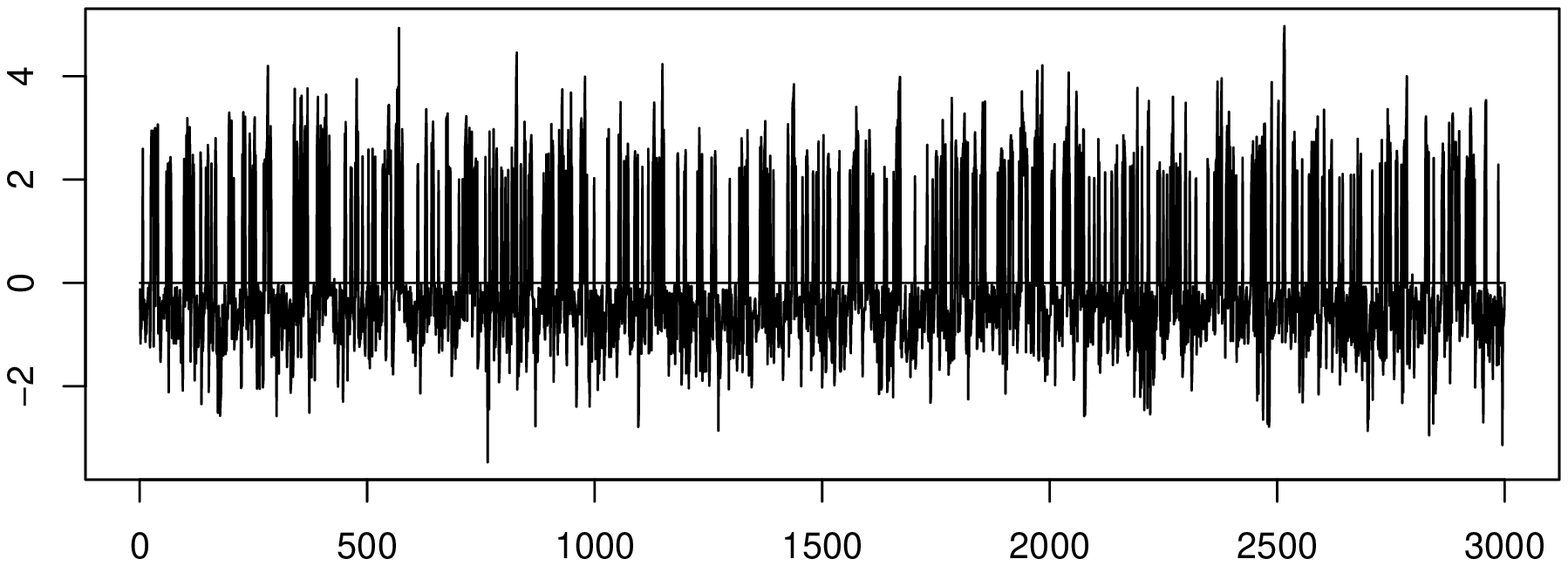}} 
	\end{center}
\caption{\small Trajectories  of solutions of  (\ref{proSET}), $p=10$. Left: $Q(x) = \1(x>0)$, right: $Q(x) = \1(0<x< 2)$. }
\label{QTR}
\end{figure}


\section{Simulations}

Solutions of projective equations can be
easily simulated using a truncated expansion $X^{(M)}_t = \sum_{t-M \le s \le t} g_{s,t} \zeta_s $
instead of infinite series in (\ref{proma0}). We chose the truncation level $M$ equal to the sample size $M= n = 3000 $ in
the subsequent simulations. The coefficients $g_{s,t} $  of projective equations
are computed very fast from recurrent formula (\ref{giter}) and simulated values
$\zeta_s, -M \le s \le n$.  The innovations were taken standard normal. For better comparisons,
we used the same sequence $\zeta_s, -M \le s \le n$ in all simulations.

Stationary solution of equation (\ref{proSE2}) was simulated for three different choices of $Q$ and
two choices of coefficients $\alpha_j, \beta_j$. The first choice of coefficients is $\alpha_j = 0.5^j, \beta_j= c\, 0.9^j $ and
corresponds to a short memory process $\{X_t\}$.
The second choice is $\alpha_j = \Gamma(d+ j)/\Gamma(d) \Gamma (j+1), \, \beta_j = c \alpha_j$ with $d = 0.4$
corresponds to a long memory process $\{X_t\}$ with coefficients as in ARFIMA$(0,d,0)$.
The value of $c>0$
was chosen so that $ c^2_Q B^2 = 0.9 < 1 $. The latter condition guarantees the existence
of a stationary solution of  (\ref{proSE2}), see Proposition \ref{propBeta}.

The simulated trajectories and (smoothed) histograms of marginal densities strongly depend on the kernel $Q$. We
used the following functions:
\begin{equation} \label{Qsimul}
Q_1(x) = x,  \qquad Q_2(x)=\max(0,x), \qquad Q_3(x) = \begin{cases}
x,&x\in [0, 1],\\
2-x,&x \in [1, 2],\\
0,&\text{otherwise}.
\end{cases}
\end{equation}
Clearly, $Q_i, i=1,2,3$ in (\ref{Qsimul}) satisfy (\ref{Qdom}) with $c_Q =1 $ and the Lipschitz condition
(\ref{QLip}). Note that $Q_3$ is bounded and supported in the compact interval $[0,2]$ while
$Q_1, Q_2 $ are unbounded, the latter being bounded from below. Also
note that for $\beta_j \equiv 0$ and the choice of $\alpha_j$ as above, the projective
process $\{X_t \}$ of (\ref{proSE2}) agrees with AR(.5) for $\alpha_j = 0.5^j$ and with ARFIMA$(0,0.4,0)$ for $\alpha_j = \Gamma(d+ j)/\Gamma(d) \Gamma (j+1)$
in all three cases  in (\ref{Qsimul})

\begin{figure}[!t]
	\begin{center}
		\subfigure{ \includegraphics[height=0.20\textwidth]{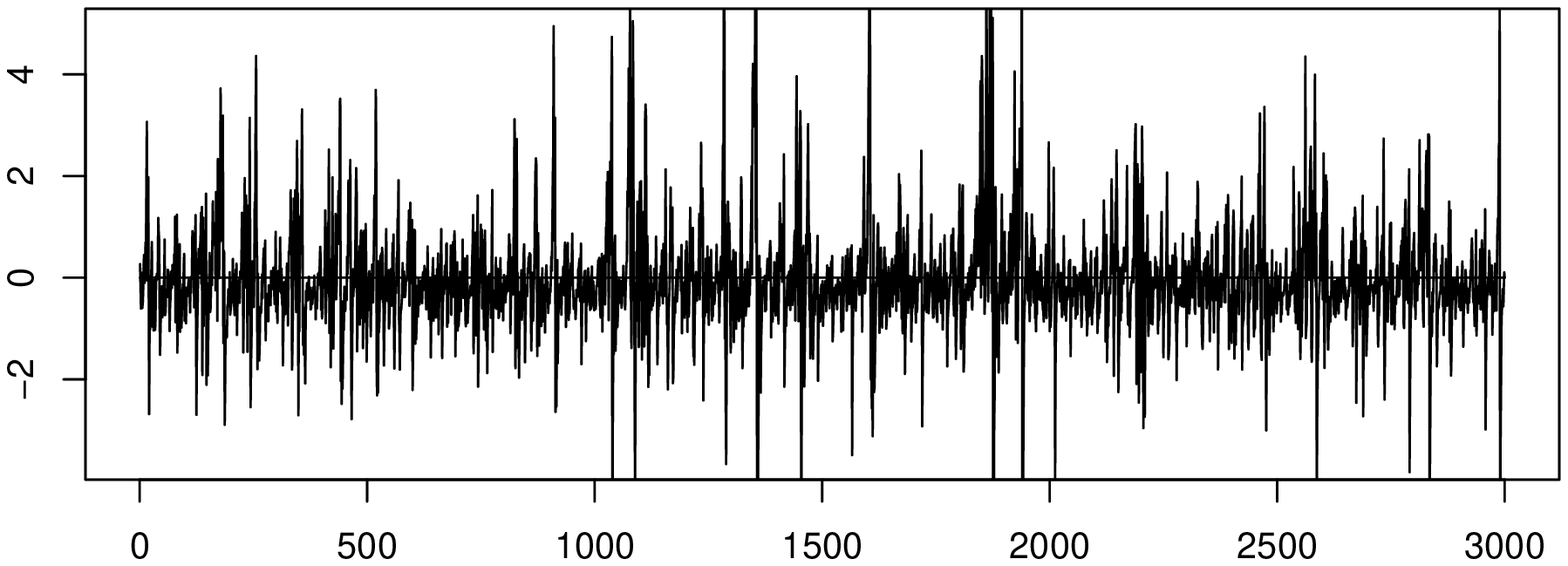}}
		\subfigure{\includegraphics[height=0.20\textwidth]{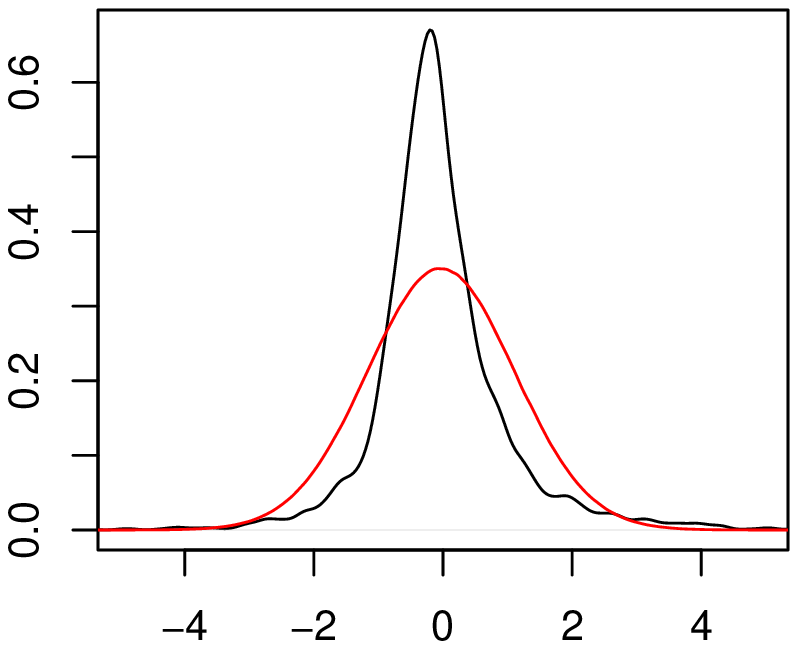}} \\
\subfigure{ \includegraphics[height=0.20\textwidth]{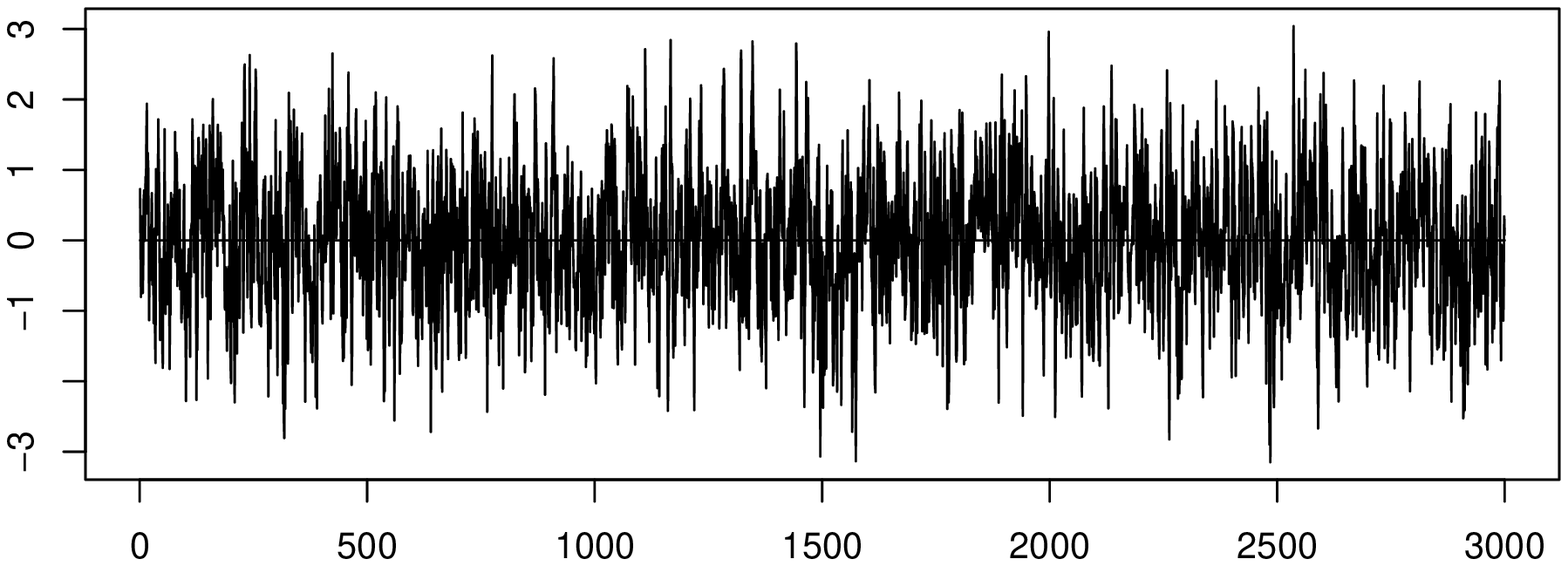}}
		\subfigure{\includegraphics[height=0.20\textwidth]{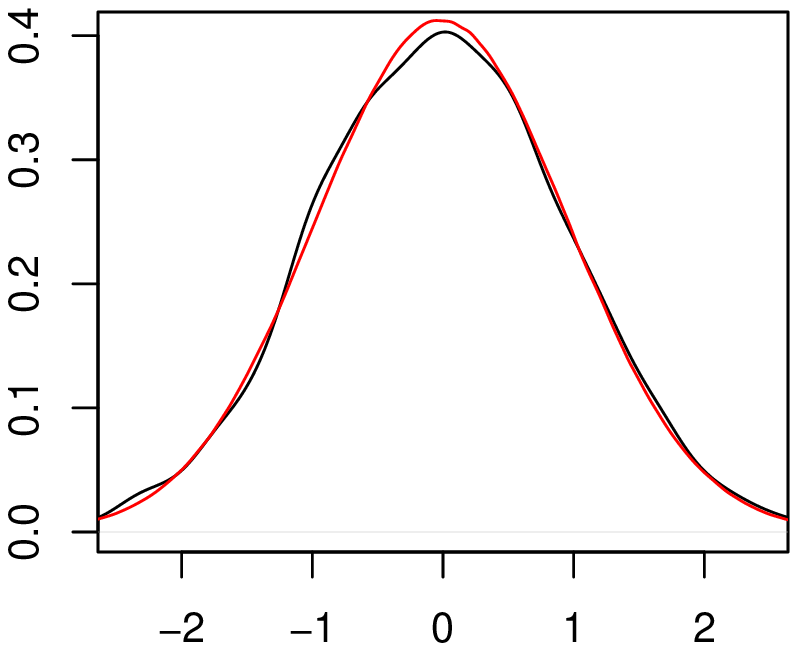}}
	\end{center}
\caption{\small Trajectories  and (smoothed) histograms of solutions of projective equation (\ref{proSE2}) with $Q(x) = Q_1(x) = x$. Top: $\alpha_j = (.5)^j, \beta_j = c (.9)^j$,
bottom: $\alpha_j = (.5)^j, \beta_j = 0$.}
\label{lmQint1}
\end{figure}

\begin{figure}[!t]
	\begin{center}
		\subfigure{ \includegraphics[height=0.20\textwidth]{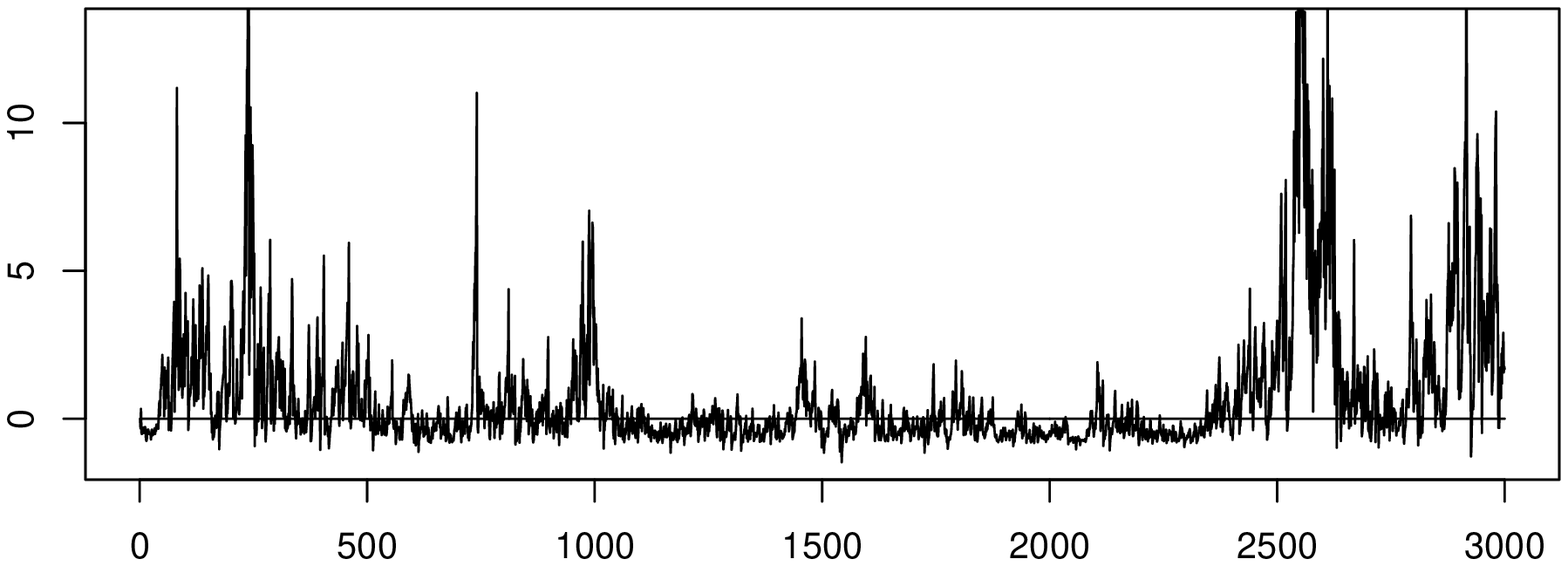}}
		\subfigure{\includegraphics[height=0.20\textwidth]{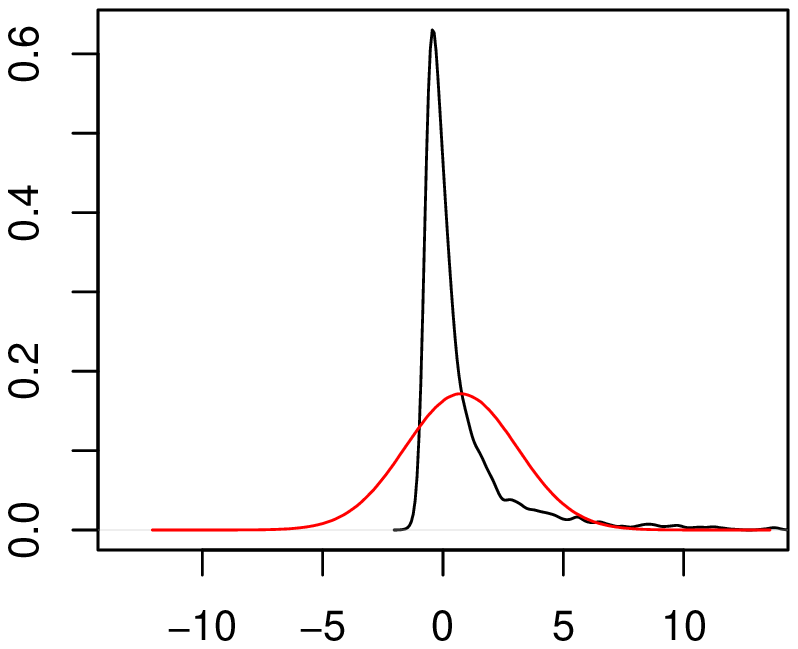}} \\
\subfigure{ \includegraphics[height=0.20\textwidth]{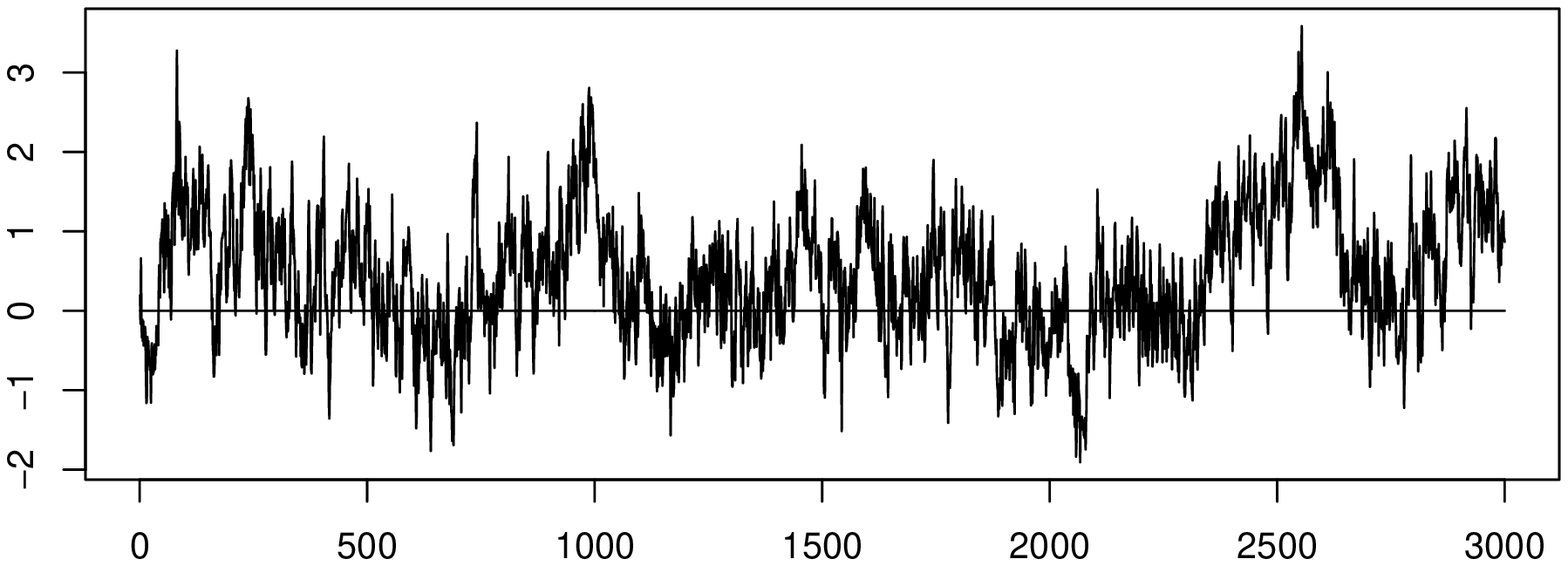}}
		\subfigure{\includegraphics[height=0.20\textwidth]{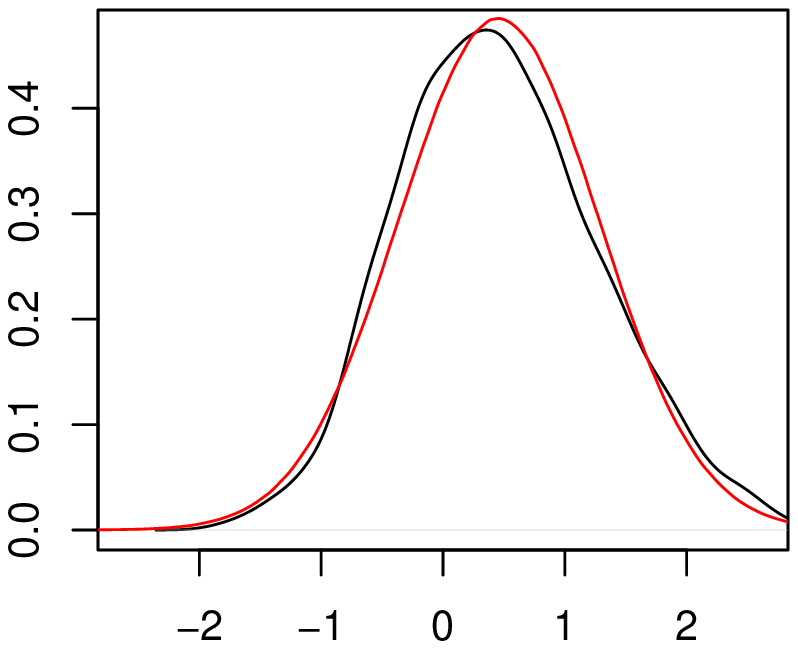}}
	\end{center}
\caption{\small Trajectories   and (smoothed) histograms of solutions of projective  equation (\ref{proSE2}) with $Q(x) = Q_2(x) = \max(x,0).$ Top: $\alpha_j = {\rm ARFIMA}(0,0.4,0), \beta_j = c \, \alpha_j$, bottom:
$\alpha_j = {\rm ARFIMA}(0,0.4,0), \beta_j = 0$.}
\label{lmQint2}
\end{figure}

\begin{figure}[!t]
	\begin{center}
		\subfigure{ \includegraphics[height=0.20\textwidth]{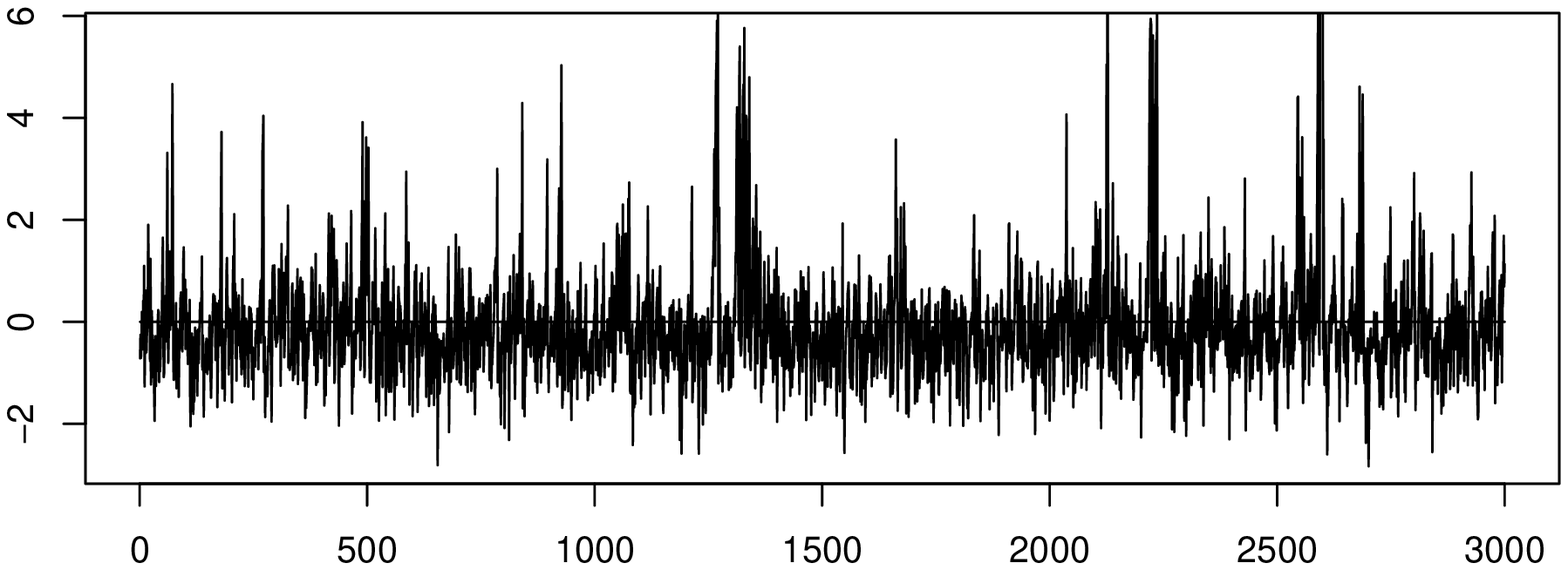}}
		\subfigure{\includegraphics[height=0.20\textwidth]{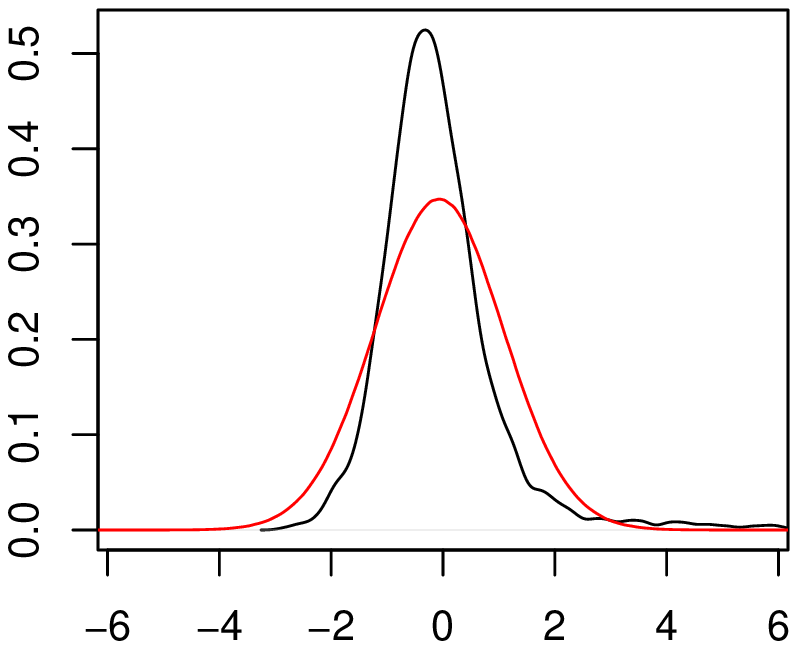}} \\
\subfigure{ \includegraphics[height=0.20\textwidth]{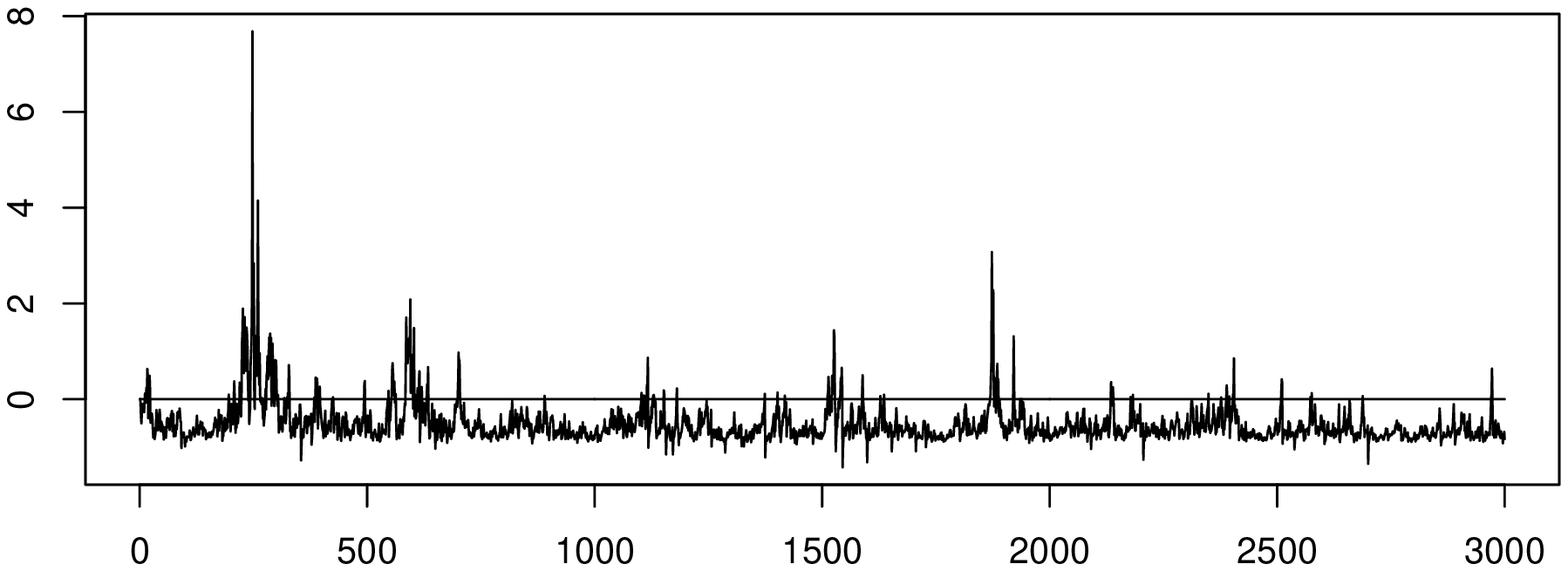}}
		\subfigure{\includegraphics[height=0.20\textwidth]{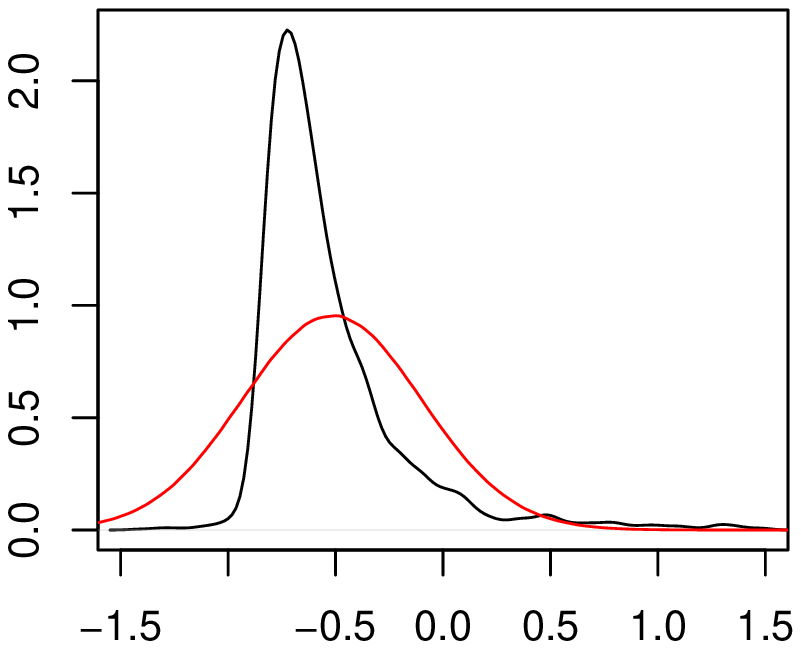}}
	\end{center}
\caption{\small Trajectories and (smoothed) histograms of solutions of projective equation (\ref{proSE2}) with $Q(x) = Q_3(x)$ = the ``triangle function'' in
(\ref{Qsimul}).
Top:  $\alpha_j = (.5)^j, \beta_j = c (.9)^j$, bottom: $\alpha_j = {\rm ARFIMA}(0,0.4,0), \beta_j = c \, \alpha_j$}
\label{lmQint3}
\end{figure}

A general impression from our simulations is that in all cases of $Q$ in (\ref{Qsimul}), the coefficients $\alpha_j$ account for the persistence
and  $\beta_{j}$ for the clustering of the process.  We observe that as $\beta_j$'s increase, the process becomes more asymmetric and its empirical
density diverges from the normal density (plotted in red in Figures~\ref{lmQint1}-\ref{lmQint3}
with parameters equal to the empirical mean and variance of the simulated series).
In the case of unbounded $Q = Q_1, Q_2$  and long memory ARFIMA coefficients, the marginal distribution seems
strongly skewed to the left and having a very light left tail and a much heavier right tail. On the other hand,
in the case of geometric coefficients, the density for $Q = Q_1, Q_2$ seems rather symmetric although heavy tailed. Case of $Q=Q_3$
corresponding to
bounded $Q$ seems to result in asymmetric distribution with light tails.

\section{Modifications}


Equation (\ref{proSEI}) can be modified in several ways. The first modification is
obtained by taking
the $\alpha_{t-s}$'s ``outside of $Q$'':
\begin{eqnarray}\label{proSEII}
X_t&=&\mu + \sum_{s\le t} \zeta_s \alpha_{t-s} Q\Big(\sum_{s<u\le t}  \beta_{t-u, u-s} \left(\E_{[u,t]} X_t - \E_{[u+1,t]} X_t\right) \Big),
\end{eqnarray}
where $\alpha_i, \beta_{i,j}, Q$ satisfy similar conditions as in (\ref{proSEI}). However, note that (\ref{Qdom}) implies
$Q(0) = 0$ in which case (\ref{proSEII}) has a trivial solution $X_t \equiv \mu $.  To avoid
the last eventuality, condition (\ref{Qdom}) must be changed.
Instead,
we shall assume  that $Q$ is a measurable function
satisfying
\begin{eqnarray}
Q(x)^2&\le&c^2_0 + c^2_1x^2,  \quad x \in {\R}  \label{QdomII}
\end{eqnarray}
for some $c_0, c_1 \ge 0$.
Denote
\begin{eqnarray*}
\tilde K_{Q}&:=&c^2_0
\sum_{k=0}^\infty c_1^{2k} \sum_{i=0}^\infty \alpha_i^2 \sum_{j_1=1}^\infty \alpha^2_{i+j_1}\beta^2_{i,j_1} \cdots
\sum_{j_k=1}^\infty \alpha^2_{i+ j_1 + \dots + j_k} \beta^2_{i+ j_1 + \dots + j_{k-1}, j_k}.
\end{eqnarray*}
Proposition \ref{thm2} can be proved similarly to Theorem \ref{thm1} and its proof is omitted.

\smallskip

\begin{proposition} \label{thm2} (i) Assume condition (\ref{QdomII}) and
\begin{equation}\label{Kcond2}
\tilde K_{Q} \ < \ \infty.
\end{equation}
Then there exists a unique solution $\{ X_t \} $ of (\ref{proSEII}), which is  written as
a projective moving average of (\ref{XMA})
with coefficients $g_{t-k,t}$ recursively defined as
\begin{eqnarray*}
g_{t-k,t}&:=&\alpha_k
Q\Big(\sum_{i=0}^{k-1} \beta_{i,k-i} \zeta_{t-i} g_{t-i,t}  \Big), \qquad k =1,2, \dots, \quad g_{t,t}:= \alpha_0 Q(0).
\label{giterII}
\end{eqnarray*}

\noi (ii) In the case of linear function $Q(x) = c_0 + c_1 x$, condition (\ref{Kcond2}) is also necessary
for the existence of a solution of (\ref{proSEII}).

\end{proposition}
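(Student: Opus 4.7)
The plan is to mirror the proof of Theorem \ref{thm1} but using the weaker dominating bound \refeq{QdomII} in place of \refeq{Qdom}. Accordingly, I would define the candidate coefficients $g_{t-k,t}$ by the recursion in the statement (with $g_{t,t}=\alpha_0 Q(0)$) and then show they produce a projective moving average that solves \refeq{proSEII}.

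First I would establish the key recurrent variance bound. Because $g_{t-i,t}$ is measurable with respect to $\zeta_{t-i+1},\ldots,\zeta_t$ while $\zeta_{t-i}$ is independent of that family and has unit variance, the random variables $\{\zeta_{t-i}g_{t-i,t}\}_{i=0}^{k-1}$ are pairwise orthogonal; in particular,
\[
\E\Big(\sum_{i=0}^{k-1}\beta_{i,k-i}\zeta_{t-i}g_{t-i,t}\Big)^2=\sum_{i=0}^{k-1}\beta_{i,k-i}^2\,\E g_{t-i,t}^2.
\]
Combining this with \refeq{QdomII} applied inside the outer $Q$ gives
\[
m_k:=\E g_{t-k,t}^2\ \le\ \alpha_k^2\Big(c_0^2+c_1^2\sum_{i=0}^{k-1}\beta_{i,k-i}^2 m_i\Big),\qquad k\ge1,
\]
with $m_0\le\alpha_0^2 c_0^2$.

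Second, I would iterate this recurrence exactly as in the proof of Theorem \ref{thm1}: each substitution introduces one additional factor of $c_1^2$, one extra pair $\alpha_{i+j_1+\cdots+j_\ell}^2\beta_{i+j_1+\cdots+j_{\ell-1},j_\ell}^2$, and the summation index $k$ is rewritten as $k=i+j_1+\cdots+j_\ell$. Summing over $k\ge0$ telescopes the indices into the nested series defining $\tilde K_Q$, yielding $\sum_{k\ge0}m_k\le\tilde K_Q<\infty$ under \refeq{Kcond2}. Therefore $X_t:=\mu+\sum_{s\le t}g_{s,t}\zeta_s$ is a well-defined projective moving average by Definition \ref{proma}, and substituting the recursion for $g_{t-k,t}$ into the expansion together with \refeq{Eg} (so that $\E_{[u,t]}X_t-\E_{[u+1,t]}X_t=\zeta_u g_{u,t}$) shows that it satisfies \refeq{proSEII}. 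Uniqueness follows from Proposition \ref{propunique}: any other solution must have the same conditional martingale differences, hence the same coefficients $g_{s,t}$, determined by the same recursion.

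For part (ii), with $Q(x)=c_0+c_1x$, the key point is that the cross term vanishes in expectation: $\E\bigl[\sum_{i=0}^{k-1}\beta_{i,k-i}\zeta_{t-i}g_{t-i,t}\bigr]=0$ by independence of $\zeta_{t-i}$ from $g_{t-i,t}$, so
\[
m_k=\alpha_k^2\Big(c_0^2+c_1^2\sum_{i=0}^{k-1}\beta_{i,k-i}^2 m_i\Big)
\]
with \emph{equality}. Iterating gives $\sum_{k\ge0}m_k=\tilde K_Q$, which equals $\mathrm{Var}(X_t)$ by the orthogonality of the backward martingale differences in \refeq{XMA}. Hence a finite-variance solution exists only if $\tilde K_Q<\infty$. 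The main obstacle I anticipate is purely bookkeeping: carefully re-indexing the iterated recurrence so that the telescoping sum precisely matches the nested expression for $\tilde K_Q$; the probabilistic content is completely supplied by the orthogonality argument above and by the arguments already developed for Theorem \ref{thm1}.
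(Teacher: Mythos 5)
Your proposal is correct and is essentially the argument the paper intends: the paper omits the proof of Proposition \ref{thm2}, stating it is proved "similarly to Theorem \ref{thm1}", and your adaptation (orthogonality of the martingale-difference terms, the recurrent variance bound from (\ref{QdomII}), iteration/re-indexing to obtain $\tilde K_Q$, existence and uniqueness via Propositions \ref{proseGen} and \ref{propunique}, and exact equality $\mathrm{Var}(X_t)=\tilde K_Q$ in the linear case for part (ii)) is exactly that adaptation.
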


\begin{remark}\label{remMod}
{\rm Let $A^2_k := \sum_{i=k}^\infty \alpha^2_i$ and $|\beta_{i,j}| \le \bar \beta $. Then
\begin{eqnarray*}
\tilde K_{Q}&\le&c^2_0
\sum_{k=0}^\infty (c_1 \bar \beta)^{2k} \sum_{i=0}^\infty \alpha_i^2   \sum_{j_1=1}^\infty \alpha^2_{i+j_1} \cdots
\sum_{j_k=1}^\infty \alpha^2_{i+ j_1 + \dots + j_k} \
\le\ c^2_0
\sum_{k=0}^\infty (c_1 \bar \beta)^{2k} A^2_0 A^2_1 \cdots A^2_k.
\end{eqnarray*}
Hence, $A^2 = A^2_0 < \infty $ and $\bar \beta < \infty $ imply $\tilde K_{Q}< \infty, $ for any
$c_0, c_1, \bar \beta $; see the proof of Proposition \ref{propBeta}.
}
\end{remark}

\smallskip

Projective stochastic equations (\ref{proSEI}) and (\ref{proSEII}) can be further modified by
including projections of lagged variables.
Consider the following extension of  (\ref{proSEI}):
\begin{eqnarray}\label{proSE0}
\hskip-.6cm &X_t= \mu
+ \sum\limits_{s\le t} \zeta_s Q\Big(\alpha_{t-s}+   \sum\limits_{u=s+1}^{t-1}  \beta_{t-1-u, u-s} \left(\E_{[u,t-1]} X_{t-1} - \E_{[u+1,t-1]}\right) X_{t-1} \Big),
\end{eqnarray}
where  $\alpha_i, \beta_{i,j}, \, Q $ are the same as in (\ref{proSEI}) and the only  new feature is that $t$ is replaced
by $t-1$ in the inner sum on the r.h.s. of the equation. This fact allows to study {\it nonstationary } solutions
of (\ref{proSE0}) with a given projective initial condition  $X_t = X^0_t, t \le 0$ and the convergence
of $X_t$ to the equilibrium as $t \to \infty$; however, we shall
not pursue this topic in the present paper.
The following proposition is a simple extension
of Theorem \ref{thm1} and its proof is omitted.

\begin{proposition} \label{prop11}  Let $\alpha_i, \beta_{i,j}, Q$ satisfy the conditions of Theorem \ref{thm1}, including
(\ref{Qdom}) and (\ref{Kcond}).
Then there exists a unique solution $\{ X_t \} $ of (\ref{proSE0}), which is  written as
a projective moving average of (\ref{XMA})
with coefficients $g_{t-k,t}$ recursively defined as $g_{t-k,t} := Q(\alpha_k), k=0,1 $ and
\begin{eqnarray*}
g_{t-k,t}&:=&
Q\big(\alpha_k+ \sum_{i=0}^{k-2} \beta_{i,k-1-i} \zeta_{t-1-i} g_{t-1-i,t-1}  \big), \qquad k \ge 2.
\label{giter11}
\end{eqnarray*}
\end{proposition}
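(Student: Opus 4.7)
The plan is to mirror the proof of Theorem~\ref{thm1}, accounting for the index shift introduced by replacing the inner projections of $X_t$ by those of $X_{t-1}$. First, I would use formula (\ref{Eg}) from Proposition~\ref{propunique} to identify
$$\E_{[u,t-1]} X_{t-1} - \E_{[u+1,t-1]} X_{t-1} \ = \ P_{u,t-1} X_{t-1} \ = \ \zeta_u\, g_{u,t-1}.$$
Substituting this into (\ref{proSE0}) converts the equation into
$$X_t \ = \ \mu + \sum_{s\le t} \zeta_s\, Q\Big(\alpha_{t-s} + \sum_{u=s+1}^{t-1} \beta_{t-1-u,\,u-s}\,\zeta_u\, g_{u,t-1}\Big),$$
and the uniqueness of the representation (\ref{XMA}) forces $g_{s,t} = Q(\alpha_{t-s} + \sum_{u=s+1}^{t-1} \beta_{t-1-u,u-s}\zeta_u g_{u,t-1})$. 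The change of variables $k=t-s$, $i=t-1-u$ (so $u-s = k-1-i$) immediately yields the recursion stated in the proposition.

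Next, I would establish by induction on $k$ that $g_{t-k,t}$ is a deterministic measurable function of $\zeta_{t-k+1},\dots,\zeta_{t-1}$ whose distribution does not depend on $t$; this simultaneously verifies $\mathcal{F}_{[s+1,t]}$-measurability (condition (i) of Definition~\ref{propro}) and stationarity, so that $\{X_t\}$ will be a projective moving average in the sense of Definition~\ref{proma}. In parallel, I would control $h_k := \E g_{t-k,t}^2$ through (\ref{Qdom}) and the recursion:
$$h_k \ \le \ c_Q^2 \Big(\alpha_k^2 + \sum_{i=0}^{k-2} \beta_{i,k-1-i}^2\, h_i \Big), \qquad k\ge 2,$$
with $h_0 = Q(\alpha_0)^2$, $h_1 = Q(\alpha_1)^2$; here stationarity is what lets me identify $\E g^2_{t-1-i,t-1}$ with $h_i$. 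Iterating this exactly as in the chain (\ref{gkiter})--(\ref{Kineq}) and using the bijection $(i,k)\leftrightarrow (i,j)$ with $j=k-1-i$ between $\{(i,k):\,0\le i\le k-2,\,k\ge 2\}$ and $\{(i,j):\,i\ge 0,\,j\ge 1\}$ shows that $\sum_{k\ge 0} h_k \le K_Q < \infty$. Hence $X_t = \mu + \sum_{s\le t} g_{s,t}\zeta_s$ is a well-defined projective moving average, and uniqueness follows by the argument of Proposition~\ref{proseGen} applied to the new recursion.

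The only non-trivial point is the bookkeeping around the shifted upper limit $t-1$: one must check that summing the bound on $h_k$ over $k$ produces \emph{the same} quantity $K_Q$ as in Theorem~\ref{thm1}, rather than a shifted variant. The bijection on index pairs noted above handles this cleanly, since each factor of $\alpha^2$ in the iterated bound corresponds to some level in the nested sum defining $K_Q$ in (\ref{KQ}), and the telescoping second-index constraint $j_1 + \cdots + j_\ell$ is preserved under the shift. Once this is verified, no condition beyond (\ref{Qdom}) and (\ref{Kcond}) is needed, matching the hypotheses of Theorem~\ref{thm1}.
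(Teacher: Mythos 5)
Your reduction of (\ref{proSE0}) to the stated recursion via (\ref{Eg}) and the uniqueness of the representation (\ref{XMA}), the measurability/stationarity induction, and the orthogonality bound $h_k \le c_Q^2\big(\alpha_k^2+\sum_{i=0}^{k-2}\beta^2_{i,k-1-i}h_i\big)$ are all correct, and this is clearly the route the paper has in mind (it omits the proof as a ``simple extension'' of Theorem \ref{thm1}, so there is no written argument to compare against). The gap sits exactly at the point you flag as the only non-trivial one: the claim that iterating and summing reproduces $K_Q$ of (\ref{KQ}) because the ``telescoping second-index constraint is preserved under the shift''. It is not. Your bijection $(i,k)\leftrightarrow(i,j)$, $j=k-1-i$, repairs only the first level. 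Writing an iterated chain from the bottom lag $i$ upwards, the factor attached to $i$ is $\beta^2_{i,j_1}$ with $j_1=(\hbox{gap})-1$, so the next lag is $i+j_1+1$, not $i+j_1$; inductively the $\ell$-th factor is $\beta^2_{i+j_1+\cdots+j_{\ell-1}+(\ell-1),\,j_\ell}$. Thus the iteration bounds $\sum_k h_k$ by a \emph{shifted} variant of $K_Q$, which is precisely the possibility you said had to be excluded.

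This is not repairable by better bookkeeping, because for arbitrary $\beta_{i,j}$ the shifted sum is not controlled by $K_Q$. Take $Q(x)=x$ (so $c_Q=1$), $\alpha_i=\1(i=0)$, $\beta_{0,1}=1$, $\beta_{1,j}=0$ for all $j$, $\beta_{2,j}=1$ for all $j\ge1$, and $\beta_{i,j}=0$ for $i\ge3$. Then $K_Q=2<\infty$, so (\ref{Qdom}) and (\ref{Kcond}) hold; but the recursion of the proposition gives $g_{t-2,t}=\zeta_{t-1}$ and, for every $k\ge4$, $g_{t-k,t}=\beta_{2,k-3}\,\zeta_{t-3}\,g_{t-3,t-1}=\zeta_{t-3}\zeta_{t-2}$, whence $\sum_{k}\E g^2_{t-k,t}=\infty$ and (since for linear $Q$ the recursion determines the coefficients of any solution exactly) no $L^2$ solution of (\ref{proSE0}) exists. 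So under (\ref{Kcond}) alone your argument cannot be completed; one needs either to replace $K_Q$ by its shifted analogue (first indices $i+j_1+\cdots+j_{\ell-1}+\ell-1$ in (\ref{KQ})), or an additional shift-robust hypothesis such as the one in Remark \ref{rem5} (or monotonicity of $\beta^2_{i,j}$ in the first index), under which the replacement of $t$ by $t-1$ in the inner sum is indeed harmless. As stated, your ``same quantity $K_Q$'' step is false, and this also casts doubt on the proposition exactly as formulated.
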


Finally, consider a projective equation (\ref{proSE}) with $\mu_t \equiv 0$
and kernels $Q_{s,t} $ $= $ $Q_{t-s} (x_{s+1,t-1}, \dots,$ $ x_{s+1,s})$ depending on $t-s$ real variables,
where $Q_0 = 1$ and
\begin{eqnarray} \label{Qj}
Q_j (x_1, \dots, x_j)&=&\frac{d(x_1)}{1} \cdot \frac{d(x_2) + 1}{2} \cdot \frac{d(x_3) + 2}{3}
\cdots
\frac{d(x_j) +j - 1}{j},
\end{eqnarray}
$j\ge 1$, where $d(x), x \in \R$ is a measurable function taking values in the interval $(-1/2, 1/2)$.
More explicitly,
\begin{eqnarray}\label{proseARFIMA}
\hskip-.8cm X_t&=&\sum_{j=0}^\infty   Q_j \big(\E_{[t-j+1,t-1]} X_{t-1}, \E_{[t-j+1,t-2]} X_{t-2}, \dots, \E_{[t -j+1,t-j]} X_{t-j} \big) \zeta_{t-j},
\end{eqnarray}
where $ \E_{[t -j+1,t-j]} X_{t-j} = \E X_t = 0$. Note that when $d(x) = d$ is constant,
$\{X_t \} $ (\ref{proseARFIMA})
is a stationary ARFIMA$(0,d,0)$ process. Time-varying fractionally integrated processes with
deterministic coefficients of the form
(\ref{Qj}) were studied in \cite{ref16a},  \cite{ref16b}.  We expect that  (\ref{proseARFIMA})
feature a ``random'' memory intensity depending on the values of the process. A rigorous study
of long memory properties of this model does not seem easy. On the other hand, solvability of
(\ref{proseARFIMA}) can be established similarly to  the previous cases (see below).

\begin{proposition} \label{prop1d}  Let $d(x)$ be a measurable function taking values in $(-1/2,1/2) $ and
such that $\sup_{x \in \R} d(x) \le \bar d$, where $\bar d \in (0,  1/2)$.
Then there exists a unique stationary solution $\{ X_t \} $ of (\ref{proseARFIMA}), which is  written as
a projective moving average of (\ref{XMA})
with coefficients $g_{s,t}$ recursively defined as $g_{t,t} := 1 $ and
\begin{eqnarray}
g_{s,t}&:=&
Q_{t-s}\big(\sum_{s< u \le t-1} \zeta_u g_{u, t-1}, \sum_{s< u \le t-2} \zeta_u g_{u, t-2}, \dots,  0\big), \qquad s<t,
\label{gARFIMA}
\end{eqnarray}
with $Q_{t-s} $ defined at (\ref{Qj}).
\end{proposition}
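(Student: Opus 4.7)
The plan is to parallel the proof of Theorem~\ref{thm1}, with the role of condition (\ref{Kcond}) now played by the classical square-summability of the fractional ARFIMA$(0,\bar d,0)$ coefficients. First, I would construct $g_{s,t}$ by induction on $j := t-s$: given $g_{u,t'}$ for all pairs with $t'-u<j$, the right-hand side of (\ref{gARFIMA}) is well defined because each inner sum $\sum_{s<u\le t-i}\zeta_u g_{u,t-i}$ only involves coefficients $g_{u,t-i}$ with $(t-i)-u\le j-i-1<j$. This inductive construction simultaneously shows that $g_{s,t}=g_{t-s}(\zeta_{s+1},\dots,\zeta_t)$ for a non-random function $g_{t-s}:\R^{t-s}\to\R$, producing the Bernoulli-shift form required by Definition~\ref{proma}.

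The heart of the argument is a deterministic pointwise bound on $|Q_j|$. Since $d$ takes values in $(-1/2,1/2)$, the $k=1$ factor in (\ref{Qj}) satisfies $|d(x_1)|\le 1/2$. For $k\ge 2$, the lower bound $d(x_k)>-1/2$ gives $d(x_k)+k-1>k-3/2\ge 1/2>0$, and the upper bound $d(x_k)\le\bar d$ then gives $(d(x_k)+k-1)/k\le(\bar d+k-1)/k$. Multiplying across $k$ yields
\[
|Q_j(x_1,\dots,x_j)|\;\le\;\frac{1}{2}\prod_{k=2}^{j}\frac{\bar d+k-1}{k}\;=\;\frac{b_j(\bar d)}{2\bar d},
\]
where $b_j(\bar d):=\Gamma(\bar d+j)/(\Gamma(\bar d)\Gamma(j+1))$ are the ARFIMA$(0,\bar d,0)$ coefficients. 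Because $\bar d<1/2$, the classical asymptotic $b_j(\bar d)\sim c\,j^{\bar d-1}$ yields $\sum_{j\ge 0}b_j(\bar d)^2<\infty$; combined with the a.s.\ bound $|g_{t-j,t}|\le b_j(\bar d)/(2\bar d)$, this gives $\sum_{s\le t}\E g_{s,t}^2<\infty$, so that $X_t:=\sum_{s\le t}g_{s,t}\zeta_s$ is a well-defined $L^2$ projective process with $\E X_t=0$.

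To verify that $X_t$ solves (\ref{proseARFIMA}), I would invoke the uniqueness of the projective representation (Proposition~\ref{propunique}) to write, for each $i\ge 1$ and $s<t-i$, $\E_{[s+1,t-i]}X_{t-i}=\sum_{u=s+1}^{t-i}\zeta_u g_{u,t-i}$. Substituting these identities into the defining recursion (\ref{gARFIMA}) for $g_{t-j,t}$ recovers precisely the coefficient of $\zeta_{t-j}$ prescribed by (\ref{proseARFIMA}). Uniqueness of the stationary solution follows along the lines of Proposition~\ref{proseGen}: any other solution must, by Proposition~\ref{propunique}, generate coefficients obeying the same recursion (\ref{gARFIMA}), and induction on $j=t-s$ forces agreement with $g_{s,t}$. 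The main obstacle is the pointwise comparison $|Q_j|\le b_j(\bar d)/(2\bar d)$ and extracting $L^2$-summability from $\bar d<1/2$; once this deterministic estimate is secured, the construction, verification, and uniqueness steps are all direct adaptations of the arguments already developed for Theorem~\ref{thm1} and Proposition~\ref{proseGen}.
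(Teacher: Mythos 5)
Your proof is correct and follows essentially the same route as the paper: a uniform deterministic bound $|Q_j|\le C\,\Gamma(\bar d+j)/(\Gamma(\bar d)\Gamma(j+1))$ giving square-summable coefficients, with existence, verification and uniqueness then handled exactly as in Theorem \ref{thm1} and Proposition \ref{proseGen}. Your treatment of the first factor (bounding $|d(x_1)|$ by $1/2$ rather than $\bar d$) is in fact slightly more careful than the paper's stated bound, but this only changes the constant and not the argument.
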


\begin{proof} Note that $\sup_{x_1, \dots, x_j \in \R} |Q_j(x_1,\dots, x_j)| \le \Gamma (\bar d +j)/\Gamma (\bar d) \Gamma(j) =:
\psi_j$ and $\sum_{j=0}^\infty \psi_j^2 < \infty$. Therefore
the $g_{s,t}$'s in (\ref{gARFIMA}) satisfy
$\sum_{s\le t} \E g^2_{s,t} < \infty $ for any $t \in \Z$. The rest of the proof is analogous as the case
of Theorem \ref{thm1}. \hfill $\Box$

\end{proof}

\section{Long memory}

In this section we study long memory properties (the decay of covariance and partial sums' limits) of
projective equations (\ref{proSEI})  and (\ref{proSEII}) in the case when the coefficients
$\alpha_j$'s decay slowly as $j^{d-1}, 0< d < 1/2.$

\begin{theorem} \label{LM1} Let $\{ X_t \} $ be the solution
of projective equation (\ref{proSEI}) satisfying the conditions of Theorem \ref{thm1} and $\mu = \E X_t = 0$.
Assume, in addition,
that $Q$ is a Lipschitz function, viz., there exists a constant $c_L >0$ such that
\begin{equation}\label{QLip}
|Q(x)- Q(y)| \ \ < \ c_L|x-y|, \qquad x, y \in \R
\end{equation}
and that there exist
$\kappa >0 $ and $0< d < 1/2$  such that
\begin{eqnarray}\label{mainterm}
b_j&:=&Q(\alpha_j) \ \sim \ \kappa \, j^{d-1}, \qquad j \to \infty
\end{eqnarray}
and
\begin{eqnarray}\label{barbeta1}
\bar \beta_j&:=&\max_{0\le i <j} |\beta_{i,j-i}| \ =\  o(b_j), \qquad j \to  \infty.
\end{eqnarray}
Then, as \ $t \to \infty $
\begin{eqnarray}\label{CovLM}
\E X_0 X_t&\sim&\sum_{k= 0}^\infty b_k b_{t+k}
 \ \sim \ \kappa^2_d  t^{2d-1},
\end{eqnarray}
where $\kappa^2_d :=  \kappa^2 B(d,1-d)$ and
$B(d,1-d)$ is beta function. Moreover, as $n \to \infty$
\begin{eqnarray}\label{FCLT}
n^{-1/2 -d}\sum_{t=1}^{[n\tau]}X_t&\longrightarrow_{D[0,1]}&c_{\kappa,d} B_H(\tau),
\end{eqnarray}
where $B_H$ is a fractional Brownian motion with parameter $H = d+ (1/2)$ and variance $\E B^2_H(t) = t^{2H}$
and $c^2_{\kappa,d} := \frac{\kappa^2 B(d,1-d)}{d(1+2d)}$.
\end{theorem}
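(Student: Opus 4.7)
My plan is to approximate $X_t$ by the linear long-memory process $Y_t := \sum_{s\le t} b_{t-s}\zeta_s$ with $b_j = Q(\alpha_j)$, treat the difference as a small projective perturbation, and then invoke the classical fractional-Brownian FCLT \eqref{dalX} on the linear part. The validity of this reduction rests entirely on the Lipschitz hypothesis \eqref{QLip} combined with the asymptotic dominance $\bar\beta_j = o(b_j)$ built into \eqref{barbeta1}.

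First I would prove a uniform $L^2$ approximation for the coefficients: by \eqref{QLip} and the recursion \eqref{giter},
\begin{displaymath}
|g_{t-k,t} - b_k| \ \le \ c_L \Bigl|\sum_{i=0}^{k-1}\beta_{i,k-i}\zeta_{t-i}g_{t-i,t}\Bigr|.
\end{displaymath}
Because $\zeta_{t-i}$ is independent of $g_{t-i,t}$ and the $\zeta_{t-i}$ are orthogonal, one obtains
\begin{displaymath}
\E(g_{t-k,t}-b_k)^2 \ \le\ c_L^2\sum_{i=0}^{k-1}\beta_{i,k-i}^2\,\E g_{t-i,t}^2 \ \le\ c_L^2 K_Q\,\bar\beta_k^2,
\end{displaymath}
so $r_{s,u} := g_{s,u}-b_{u-s}$ satisfies $\|r_{s,u}\|_{L^2}\le c_L K_Q^{1/2}\bar\beta_{u-s} = o(b_{u-s})$, uniformly in $s,u$, and a further application of Lipschitz + Jensen gives $|\E r_{s,u}| = O(\bar\beta_{u-s})$.

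Second, for the covariance \eqref{CovLM} I would use stationarity and the orthogonality of the innovations to write $\E X_0X_t = \sum_{k\ge 0}\E[g_{-k,0}g_{-k,t}]$, expand $g_{-k,\cdot}=b_{\cdot+k}+r_{-k,\cdot}$, and bound the three cross terms by Cauchy--Schwarz:
\begin{displaymath}
\bigl|\E[g_{-k,0}g_{-k,t}]-b_kb_{k+t}\bigr|\ \le\ b_k\,|\E r_{-k,t}|+b_{k+t}\,|\E r_{-k,0}|+\|r_{-k,0}\|\,\|r_{-k,t}\|.
\end{displaymath}
Given $\eta>0$ one splits the sum in $k$: the tail $k\ge K(\eta)$ contributes at most $\eta\sum_k b_kb_{k+t}$ by $\bar\beta_j \le \eta b_j$, while the finite head contributes $O(b_t)=O(t^{d-1})=o(t^{2d-1})$. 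Hence $\E X_0X_t = \sum_k b_kb_{k+t}(1+o(1))$, and a Karamata/change-of-variable calculation ($x=ty$) gives $\sum_k b_kb_{k+t}\sim\kappa_d^2\,t^{2d-1}$.

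Finally, for the invariance principle \eqref{FCLT}, I would decompose $X_t=Y_t+R_t$ where $R_t:=\sum_{s\le t}r_{s,t}\zeta_s$ is itself a projective moving average. The linear part $Y_t$ satisfies \eqref{dalX} directly since $b_j\sim\kappa j^{d-1}$. For the residual, the same Cauchy--Schwarz mechanism as in Step~2 yields $\var R_0\le c_L^2K_Q\sum_k\bar\beta_k^2<\infty$ (using $d<1/2$) and $|\cov(R_0,R_h)|\le\sum_k\|r_{-k,0}\|\|r_{-k,h}\|=o(h^{2d-1})$, so that
\begin{displaymath}
\E\Bigl(\sum_{t=1}^m R_t\Bigr)^2 = m\,\var R_0 + 2\sum_{h=1}^{m-1}(m-h)\cov(R_0,R_h) = o(m^{1+2d}).
\end{displaymath}
This gives finite-dimensional convergence of $n^{-1/2-d}\sum_1^{[n\tau]}R_t$ to zero; combined with \eqref{dalX} for $Y_t$ and Slutsky, the finite-dimensional limit of $n^{-1/2-d}\sum_1^{[n\tau]}X_t$ is $c_{\kappa,d}B_H(\tau)$. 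The main obstacle is the $D[0,1]$ tightness of the residual under the long-memory scaling $n^{1/2+d}$: the pointwise $L^2$ bound is not by itself sufficient, and I would upgrade it via a stationary increment version $\E(\sum_{t=[nu]+1}^{[nv]}R_t)^2=o((n(v-u))^{1+2d})$ (which follows from the same computation applied to a block of length $[n(v-u)]$, the $o(\cdot)$ being uniform in the starting point by stationarity) and a standard chaining/moment argument as in FCLT proofs for fractionally integrated processes; the hypothesis $\bar\beta_j=o(b_j)$ is what makes this residual genuinely negligible rather than merely summable.
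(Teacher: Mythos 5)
Your proposal is correct and follows essentially the same route as the paper: approximate $X_t$ by the linear process $Y_t=\sum_{s\le t}b_{t-s}\zeta_s$ with $b_j=Q(\alpha_j)$, use the Lipschitz bound and the recursion \eqref{giter} together with orthogonality to get $\E(g_{t-k,t}-b_k)^2\le c_L^2K_Q\bar\beta_k^2=o(k^{2(d-1)})$, deduce \eqref{CovLM} by Cauchy--Schwarz on the cross terms, and show $\E(\sum_{t\le n}(X_t-Y_t))^2=o(n^{1+2d})$ so that the linear FCLT transfers, with tightness from the covariance bound via the Kolmogorov criterion. Your explicit head/tail splitting of the covariance sum and the separate increment bound for the residual are only presentational variants of the paper's $\delta_k$-sequence argument and its direct appeal to \eqref{CovLM} for tightness.
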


\begin{proof}
Let us note that the statements (\ref{CovLM}) and (\ref{FCLT}) are well-known when
$\beta_{i,j} \equiv 0$, in which case $X_t$ coincides
with the linear process  $Y_t := \sum_{s\le t} b_{t-s} \zeta_s$. See, e.g.,
\cite{ref11}, Proposition 3.2.1 and Corollary 4.4.1.

The natural idea of the proof is to approximate $\{X_t\}$ by the linear process
$\{Y_t\}$. For $t \ge 0, k \ge 0$, denote
\begin{eqnarray*}
r^X_t&:=&\E X_0 X_t \ =  \
\sum_{s\le 0} \E [g_{s,0}\, g_{s,t}], \qquad r^Y_t \ := \  \E Y_0 Y_t \ =  \ \sum_{s \le 0} b_{-s} b_{t-s}, \\
\phi_{t-k,t}&:=&g_{t-k,t} - b_k \ = \  Q\Big(\alpha_k+ \sum_{i=0}^{k-1} \beta_{i,k-i} \zeta_{t-i} g_{t-i,t}  \Big)- Q(\alpha_k).
\end{eqnarray*}
Then
\begin{eqnarray*}
r^X_t- r^Y_t&=&
\sum_{s\le 0} \E [(b_{-s} +  \phi_{s,0}) (b_{t-s} + \phi_{s,t})  - b_{-s} b_{t-s}]\\
&=&\sum_{s\le 0} b_{-s} \E [\phi_{s,t} ]  + \sum_{s\le 0} b_{t-s} \E [\phi_{s,0} ] + \sum_{s\le 0} \E [\phi_{s,0} \, \phi_{s,t} ]
\ =: \ \sum_{i=1}^3 \rho_{i,t}.
\end{eqnarray*}
Using (\ref{QLip}) we obtain
\begin{eqnarray*}
|\E \phi_{t-k,t}|^2\ \le \ \E \phi^2_{t-k,t}
&\le&c^2_L  \E \Big(\sum_{i=0}^{k-1} \beta_{i,k-i} \zeta_{t-i} g_{t-i,t}  \Big)^2 \\
&=&c^2_L \Big(\sum_{i=0}^{k-1} \beta^2_{i,k-i} \E g^2_{t-i,t}  \Big) \\
&\le&\bar \beta^2_k c^2_L \Big(\sum_{i=0}^{\infty} \E g^2_{t-i,t}  \Big) \\
&\le&\bar \beta^2_k c^2_L K_{Q}.
\end{eqnarray*}
This and condition (\ref{barbeta1}) imply that
\begin{eqnarray*}
|\E \phi_{t-k,t}|  + \E^{1/2} \phi^2_{t-k,t}&\le&\delta_k  k^{d-1}, \qquad \forall \ t, k \ge 0
\end{eqnarray*}
where $\delta_k \to 0 \, (k \to \infty)$.
Therefore
for any $t \ge 1$
\begin{eqnarray*}
|\rho_{1,t}|&\le&C\sum_{k=1}^\infty k^{d-1} (t+ k)^{d-1} \delta_{t+k} \ \le \ C \delta'_t t^{2d-1}, \\
|\rho_{2,t}|&\le&C\sum_{k=1}^\infty k^{d-1} \delta_k (t+ k)^{d-1} \ \le \ C  \delta'_t t^{2d-1}, \\
|\rho_{3,t}|&\le&\sum_{s\le t} \E^{1/2} [\phi^2_{s,0}]  \, \E^{1/2}[ \phi^2_{s,t} ] \ \le \
C\sum_{k=1}^\infty k^{d-1} (t+ k)^{d-1}  \delta_k \delta_{t+k}\ \le \ C \delta'_t t^{2d-1},
 \end{eqnarray*}
where $\delta'_k \to 0 \ (k \to \infty)$. This proves (\ref{CovLM}).

To show (\ref{FCLT}), consider $Z_t := X_t - Y_t = \sum_{u\le t} \phi_{u,t} \zeta_u, t \in \Z. $
By stationarity of $\{Z_t \}$, for any $s\le t$ we have
$
\cov (Z_t, Z_s)=
\sum_{u\le 0}\E [\phi_{u,0} \, \phi_{u,t-s} ] \le \sum_{u\le 0}\E^{1/2} [\phi^2_{u,0}] \, \E^{1/2} [\phi^2_{u,t-s} ]$
$= o((t-s)^{2d-1}),$
see above, and therefore $ \E \big(\sum_{t=1}^n Z_t\big)^2 = o(n^{2d+1})$, implying
$$
n^{-d -(1/2)} \sum_{t=1}^{[n\tau]} X_t\ = \ n^{-d -(1/2)} \sum_{t=1}^{[n\tau]} Y_t + o_p(1).
$$
Therefore partial sums of $\{X_t\}$ and $\{Y_t\}$ tend to the same limit $c_{\kappa,d} B_H(\tau)$, in the sense of weak convergence
of finite dimensional distributions. The tightness in $D[0,1]$ follows from (\ref{CovLM}) and
the Kolmogorov criterion.  Theorem \ref{LM1} is proved. \hfill $\Box$
\end{proof}

\smallskip

A similar but somewhat different approximation by a linear process applies in the case of  projective equations of
(\ref{proSEII}).
Let us  discuss a special case of $\beta_{i,j}$:
\begin{equation}\label{betaK}
\beta_{i,j} = 1, \qquad \text{for all} \   i=0,1, \dots,  \, j=1,2,\dots.
\end{equation}
Note that for such $\beta_{i,j}$,
$\sum_{s<u\le t}  \beta_{t-u,u-s} (\E_{[u,t]} - \E_{[u+1,t]}) X_t =
\E_{[s+1,t]} X_t, \, s < t$
and the corresponding projective equation (\ref{proSEII}) with $\mu = 0, \alpha_i = b_i$ coincides
with (\ref{proma1}). Recall that  for bounded $\beta_{i,j}$'s as in (\ref{betaK}), condition
(\ref{QdomII}) on $Q$ together with $\sum_{i=0}^\infty \alpha^2_i < \infty $ guarantee
the existence
of the stationary solution $\{X_t\} $ (see Remark \ref{remMod}).  We shall also need the following
additional condition:
\begin{equation} \label{Qextra}
\E \big(Q(\E_{[s,0]} X_0) -  Q(X_0)\big)^2 \ \to \ 0, \qquad \text{as} \quad s \to - \infty.
\end{equation}
Since $\E \big(\E_{[s,0]} X_0 - X_0\big)^2 \to 0,  \, s \to - \infty$, so (\ref{Qextra}) is satisfied
if $Q$ is Lipschitz, but otherwise conditions (\ref{Qextra}) and (\ref{QdomII}) allow $Q$ to be even
discontinuous.
Denote
$$
c^2_{Q,d} := \big(\E [Q (X_0)] \big)^2  B(d,1-d).
$$

\begin{theorem} \label{LM2} Let $\{ X_t \} $ be the solution
of projective equation (\ref{proSEII}) with $\mu = 0, \, \beta_{i,j}$ as in (\ref{betaK}),
$Q$ satisfying (\ref{QdomII})
and
\begin{equation}\label{alfaLM}
\alpha_k\ \sim\  \, k^{d-1}, \qquad k \to \infty, \qquad \exists \  \ 0< d < 1/2.
\end{equation}
In addition, let (\ref{Qextra}) hold.
Then
\begin{equation}\label{CovLMII}
\E X_0 X_t\ \sim\  c^2_{Q,d} t^{2d-1}, \qquad t \to \infty
\end{equation}
and
\begin{eqnarray}\label{FCLTII}
n^{-1/2 -d}\sum_{t=1}^{[n\tau]}X_t&\longrightarrow_{D[0,1]}&c'_{Q,d} B_H(\tau),  \qquad
c'_{Q,d} := c_{Q,d}/(d(1+2d)^{1/2}.
\end{eqnarray}
\end{theorem}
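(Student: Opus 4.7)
The plan is to follow the linear-approximation blueprint of Theorem~\ref{LM1}, but with a new asymptotic decorrelation argument that exploits the Bernoulli-shift structure of $\{X_t\}$ together with the extra condition \eqref{Qextra}. First I would specialise the recursion of Proposition~\ref{thm2} to $\beta_{i,j}\equiv 1$: the inner sum in \eqref{proSEII} telescopes to $\E_{[s+1,t]} X_t$ (with the convention $\E_{[t+1,t]} X_t = \E X_t = 0$), so $g_{s,t} = \alpha_{t-s}\, Q(\E_{[s+1,t]} X_t)$ and
\[
\E X_0 X_t \ =\ \sum_{s \le 0} \alpha_{-s}\,\alpha_{t-s}\,\E\bigl[Q(\E_{[s+1,0]} X_0)\,Q(\E_{[s+1,t]} X_t)\bigr].
\]
As the linear comparison, I would introduce $Y_t := \kappa \sum_{s\le t} \alpha_{t-s}\,\zeta_s$ with $\kappa := \E Q(X_0)$; by the classical theory of linear long memory processes (cf.\ \cite{ref11}, Prop.~3.2.1 and Cor.~4.4.1), $Y_t$ satisfies both \eqref{CovLMII} and \eqref{FCLTII} with the constants $c^2_{Q,d}$ and $c'_{Q,d}$, and the remaining task is to show that $R_t := X_t - Y_t$ is negligible.

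The decisive ingredient --- and the main obstacle --- is the asymptotic decorrelation
\[
\E\bigl[Q(X_0) Q(X_t)\bigr] \ \longrightarrow\ \kappa^2 , \qquad t \to \infty .
\]
I would establish this by the following independence trick. For $t \ge 2$, $Q(\E_{[\lceil t/2\rceil,t]} X_t)$ is measurable with respect to $\sigma(\zeta_u : u \ge \lceil t/2\rceil)$, hence independent of $X_0 \in L^2_{(-\infty,0]}$, and consequently $\E[Q(X_0)\,Q(\E_{[\lceil t/2\rceil,t]} X_t)] = \E Q(X_0)\cdot \E Q(\E_{[\lceil t/2\rceil,t]} X_t)$. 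Strict stationarity rewrites the $L^2$-discrepancy as $\|Q(X_t) - Q(\E_{[\lceil t/2\rceil,t]} X_t)\|_{L^2} = \|Q(X_0) - Q(\E_{[-\lfloor t/2\rfloor,0]} X_0)\|_{L^2}$, which tends to $0$ by \eqref{Qextra}; a Cauchy--Schwarz estimate (using $Q(X_0) \in L^2$ by \eqref{QdomII}) then upgrades the factorisation to the displayed limit.

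The rest is bookkeeping. Decomposing $Q(\E_{[s+1,0]} X_0) = Q(X_0) + e_s^{(0)}$ and $Q(\E_{[s+1,t]} X_t) = Q(X_t) + e_{s,t}^{(t)}$, where $\|e_s^{(0)}\|_{L^2} \to 0$ as $s \to -\infty$ and $\|e_{s,t}^{(t)}\|_{L^2} \to 0$ as $t-s \to \infty$ by \eqref{Qextra} and stationarity, the inner expectation above equals $\kappa^2 + r_{s,t}$ with $r_{s,t} \to 0$ jointly as $|s| \to \infty$ and $t-s \to \infty$. Splitting the covariance sum at $|s|=N_\epsilon$ chosen so that $|r_{s,t}| < \epsilon$ for $|s| \ge N_\epsilon$ and all $t$ sufficiently large, the contribution from $|s| \le N_\epsilon$ is $O(N_\epsilon t^{d-1}) = o(t^{2d-1})$, while the tail contribution is bounded by $\epsilon\cdot O(t^{2d-1})$; since $\epsilon$ was arbitrary, the main term $\kappa^2 \sum_{s\le 0}\alpha_{-s}\alpha_{t-s} \sim \kappa^2 B(d,1-d)\, t^{2d-1} = c^2_{Q,d}\,t^{2d-1}$ prevails, proving \eqref{CovLMII}.

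For the FCLT, the analogous decomposition applied to $R_t = \sum_{s\le t}\alpha_{t-s}(Q(\E_{[s+1,t]} X_t) - \kappa)\zeta_s$ gives $\cov(R_0,R_k) = o(k^{2d-1})$ as $k \to \infty$; hence
\[
\E\Bigl(\sum_{t=1}^n R_t\Bigr)^2 \ \le\ n\,\var(R_0) + 2n\sum_{k=1}^n |\cov(R_0,R_k)|\ =\ o(n^{2d+1}),
\]
so $n^{-d-1/2}\sum_{t=1}^{[n\tau]} R_t \to 0$ in probability and the normalised partial sums of $X_t$ and $Y_t$ share the same finite-dimensional limits, namely $c'_{Q,d} B_H(\tau)$. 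Tightness in $D[0,1]$ follows from \eqref{CovLMII} and the Kolmogorov criterion, exactly as at the end of the proof of Theorem~\ref{LM1}.
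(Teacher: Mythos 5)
Your proof is correct and follows the same linear-approximation skeleton as the paper: identify $g_{s,t}=\alpha_{t-s}Q(\E_{[s+1,t]}X_t)$, compare $X_t$ with the linear process $Y_t=\E[Q(X_0)]\sum_{s\le t}\alpha_{t-s}\zeta_s$, and control the discrepancy by combining independence of projections on disjoint innovation ranges with \eqref{Qextra}. The differences are in the execution. For the covariance, the paper never isolates the limit $\E[Q(X_0)Q(X_t)]\to(\E Q(X_0))^2$; inside each summand it uses the exact independence of $Q(\E_{[s+1,0]}X_0)$ and $Q(\E_{[1,t]}X_t)$ and pays for the replacement $\E_{[s+1,t]}\mapsto\E_{[1,t]}$ a price $\tilde\gamma_{2,s,t}\to 0$ uniformly in $s\le 0$ (by \eqref{Qextra} and stationarity); your split-at-$\lceil t/2\rceil$ decorrelation lemma is an equivalent, cleanly packaged form of the same idea. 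For the FCLT the routes genuinely diverge: the paper expands $\E(S^X_n-S^Y_n)^2$ as a triple sum, proves the decorrelation \eqref{Q?} uniformly in $s$ as a function of the gap $t_2-t_1$, and truncates at $|t_1-t_2|\le K$; you instead prove the per-lag bound $\cov(R_0,R_k)=o(k^{2d-1})$ (the route the paper takes in Theorem \ref{LM1}) and then sum. Your route does work, but one step deserves to be written out rather than invoked: unlike in Theorem \ref{LM1}, here $\E\tilde Q_{s,t}^2$ does \emph{not} tend to zero (no Lipschitz assumption), so Cauchy--Schwarz alone only gives $O(k^{2d-1})$; the $o(k^{2d-1})$ bound needs exactly your small-$|s|$/large-$|s|$ split applied to $\E[\tilde Q_{s,0}\tilde Q_{s,k}]$, with the finitely many near terms contributing $O(N_\epsilon k^{d-1})=o(k^{2d-1})$ and the far terms $\epsilon\, O(k^{2d-1})$. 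With that spelled out, \eqref{CovLMII} and \eqref{FCLTII} follow as you state, and your tightness argument coincides with the paper's.
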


\begin{proof}
Similarly as in the proof of the previous theorem,
let
$Y_t := \sum_{s\le t} b_{t-s} \zeta_s$,  $b_k := \ \alpha_k \E [Q ( X_0)]$, \,  $r^X_t:=\E X_0 X_t, \,
r^Y_t:=\E Y_0 Y_t, \, t\ge 0$.  Relation (\ref{CovLMII}) follows from
\begin{equation}\label{rXY}
r^X_t - r^Y_t \ = \ o(t^{2d-1}), \qquad t \to \infty.
\end{equation}
We have $X_t = \sum_{s\le t} g_{s,t} \zeta_s, \, g_{s,t} = \alpha_{t-s} Q(\E_{[s+1,t]} X_t), \, \E X^2_t = \sum_{s\le t} \E g^2_{s,t} < \infty $ and
$\E [Q(\E_{[s+1,t]} X_t)^2] \le c_0^2 + c_1^2 \E (\E_{[s+1,t]} X_t)^2 \le c_0^2 + c_1^2 \E X^2_t < C.
$
Decompose $r^X_t = r^X_{1,t} + r^X_{2,t}$, where
\begin{eqnarray*}
r^X_{1,t}
&:=&\sum_{s\le 0} \alpha_s \alpha_{t+s} \E [Q(\E_{[s+1,0]} X_0)] \,\E[ Q(\E_{[1,t]} X_t)  ], \qquad
r^X_{2,t} \ := \ \sum_{s\le 0} \alpha_s \alpha_{t+s} \gamma_{s,t},
\end{eqnarray*}
and where
\begin{eqnarray*}
|\gamma_{s,t}|&:=&\big|\E \big[Q(\E_{[s+1,0]} X_0) \big\{ Q(\E_{[s+1,t]} X_t) - Q(\E_{[1,t]} X_t)\big\}  \big]\big|
\ \le \ \tilde \gamma^{1/2}_{1,s} \tilde \gamma^{1/2}_{2,s,t},
\end{eqnarray*}
Here, $\tilde \gamma_{1,s} :=  \E[Q^2(\E_{[s+1,0]} X_0)] \le C, $ see above, while
\begin{eqnarray}
|\tilde \gamma_{2,s,t}|&:=&\E \big[\big(Q(\E_{[s+1,t]} X_t) - Q(\E_{[1,t]} X_t)\big)^2 \big]\nn \\
&=&\E \big[\big(Q(\E_{[s+1-t,0]} X_0) - Q(\E_{[1-t,0]} X_0)\big)^2 \big]\ \to \ 0,  \qquad t \to \infty
\label{Qgamma}
\end{eqnarray}
uniformly in $s \le 0$, according to (\ref{Qextra}). Hence and from
(\ref{alfaLM})
it follows that
\begin{equation}\label{rliek}
|r^X_{2,t}|
\ = \  o(t^{2d-1}), \qquad t \to \infty.
\end{equation}
Accordingly, it suffices to prove (\ref{rXY}) with $r^X_t$ replaced by $r^X_{1,t}$. We have
\begin{eqnarray*}
r^X_{1,t}
&=&r^Y_t + \sum_{s\le 0} \alpha_s \alpha_{t+s}
\phi_{1, s,t}
+ \sum_{s\le 0} \alpha_s \alpha_{t+s}
\phi_{2, s,t} + \sum_{s\le 0} \alpha_s \alpha_{t+s}
\phi_{3, s,t},
\end{eqnarray*}
where the ``remainders'' $\phi_{1, s,t} := \E [Q(X_0)] \big\{\E [Q(\E_{[s+1,0]} X_0)] - \E [Q(X_0)]\big\}$, $\
\phi_{2, s,t} :=$ $\E [Q(X_0)]$
$\times  \big\{\E [Q(\E_{[1-t,0]} X_0)] - \E [Q(X_0)]\big\} $ and  $ \phi_{3, s,t} := \big(\E [Q(\E_{[s+1,0]} X_0)] - \E [Q(X_0)]\big)\big(\E [Q(\E_{[1-t,0]} X_0)] - \E [Q(X_0)]\big) $
can be estimated similarly to (\ref{Qgamma}), leading to the asymptotics of (\ref{rliek}) for
each of the three sums in the above decomposition of $r^X_{1,t}$. This proves (\ref{CovLMII}).

\smallskip

Let us prove (\ref{FCLTII}). Consider the convergence of one-dimensional distributions for $\tau = 1$, viz.,
\begin{equation}\label{clt2}
n^{-d-1/2} S^X_n \ \to \ {\cal N}(0, \sigma^2), \qquad \sigma = c'_{Q,d}
\end{equation}
where $S^X_n := \sum_{t=1}^n X_t$. Then (\ref{clt2}) follows from
\begin{equation}\label{clt2Z}
\E (S^X_n - S^Y_n)^2 \ =  \  o(n^{1+ 2d}),
\end{equation}
where $S^Y_n := \sum_{t=1}^n Y_t$ and $Y_t$ is as above.  We have
\begin{eqnarray}\label{SXY}
 \E (S^X_n - S^Y_n)^2&=&\E \Big( \sum_{s\le n} \zeta_s \sum_{t=1\vee s}^n \alpha_{t-s} \tilde  Q_{s,t}  \Big)^2 \nn \\
 &= &
\sum_{s \le n} \sum_{t_1, t_2=1\vee s}^n \alpha_{t_1-s} \alpha_{t_2-s} \E [ \tilde  Q_{s,t_1} \tilde Q_{s,t_2}],
\end{eqnarray}
where $\tilde Q_{s,t} := Q\big(\E_{[s+1,t]} X_t \big)-
\E [Q (X_0)]$. Let us prove that uniformly in $s \le t_1 $
\begin{eqnarray} \label{Q?}
\E [\tilde  Q_{s,t_1} \tilde  Q_{s,t_2}]&=&o(1), \qquad \text{as} \qquad t_2-t_1 \to \infty.
\end{eqnarray}
We have for $s \le t_1 \le t_2$ that
\begin{eqnarray*}
\E [\tilde  Q_{s,t_1} \tilde  Q_{s,t_2}]&=&\E \Big[\tilde  Q_{s,t_1} \big\{Q\big(\E_{[s+1,t_2]} X_{t_2} \big)-
\E [Q (X_0)]\big\} \Big] \\
&=&\E [\tilde  Q_{s,t_1}] \big\{ \E \big[Q\big(\E_{[t_1+1,t_2]} X_{t_2} \big)\big] -
\E [Q (X_0)] \big\}\\
&+&\E \Big[\tilde  Q_{s,t_1} \big\{ Q\big(\E_{[s+1,t_2]} X_{t_2} \big)- Q\big(\E_{[t_1+1,t_2]} X_{t_2} \big)\big\} \Big] \
=: \ \psi'_{s,t_1,t_2} + \psi''_{s,t_1,t_2},
\end{eqnarray*}
where we used the fact that $\tilde  Q_{s,t_1}$ and $Q\big(\E_{[t_1+1,t_2]} X_{t_2} \big)$ are independent. Here,
thanks to  (\ref{Qextra}), we see that
$|\psi'_{s,t_1,t_2}| \le \E^{1/2} [\tilde  Q^2_{s,t_1}] \,\E^{1/2} \big[\big\{Q\big(\E_{[t_1-t_2+1,0]} X_{0}\big) - Q(X_0)\big\}^2\big]
\le C \E^{1/2} \big[\big\{Q\big(\E_{[t_1-t_2+1,0]} X_{0}\big) - Q(X_0)\big\}^2\big]$ $ \to 0$ uniformly in $s \le t_1 \le t_2$ as
$t_2 - t_1 \to \infty $.  The same is true for $|\psi''_{s,t_1,t_2}|$ since
it is completely analogous to (\ref{Qgamma}). This proves (\ref{Q?}). Next,
with (\ref{SXY}) in mind, split $\E (S^X_n - S^Y_n)^2 =: T_n =  T_{1,n} + T_{2,n} $, where
$$
T_{1,n} := \sum_{s \le n} \sum_{t_1, t_2=1\vee s}^n   \1(|t_1-t_2| >K) \dots,
\qquad  T_{2,n} :=
\sum_{s \le n} \sum_{t_1, t_2=1\vee s}^n \1 (|t_1-t_2|\le K) \dots,
$$
where $K$ is a large number. By (\ref{Q?}), for any $\epsilon>0 $ we can find $K>0$ such that
$\sup_{s \le t_1 < t_2: t_2 -t_1 > K} $ $|\E [\tilde  Q_{s,t_1} \tilde  Q_{s,t_2}]| < \epsilon $ and therefore
$$
|T_{1,n}|\ <\ \epsilon \sum_{s \le n} \sum_{t_1, t_2=1\vee s}^n
|\alpha_{t_1-s} \alpha_{t_2-s}|\ \le \ C\epsilon \sum_{t_1, t_2=1}^n |\bar r_{t_1-t_2}| \
\le \ C \epsilon n^{1+2d}
$$
holds for all $n>1$ large enough, where $\bar r_t := \sum_{i=0}^\infty |\alpha_i \alpha_{t+i}| = O(t^{2d-1})$ in view
of (\ref{alfaLM}).
On the other hand,
$|T_{2,n}| \le C K n = o(n^{1+2d})$ for any $ K < \infty $ fixed. Then (\ref{clt2Z}) follows, implying
the finite-dimensional convergence in (\ref{FCLTII}). The tightness in (\ref{FCLTII}) follows from (\ref{CovLMII}) and
the Kolmogorov
criterion, similarly as in the proof of Theorem \ref{LM1}. Theorem \ref{LM2} is proved. \hfill $\Box$
\end{proof}

\begin{remark}\label{Wu0}
{\rm Shao and Wu \cite{ref17} discussed partial sums limits of fractionally integrated nonlinear processes
$Y_t = (1- L)^{-d} u_t, \, t \in \Z,  $ where $L X_t = X_{t-1}$ is the backward shift, $(1- L)^d = \sum_{j=0}^\infty \psi_j (d) L^j, \,
d \in (-1,1)$ is the fractional differentiation operator,
and
$\{u_t \} $ is a causal Bernoulli shift:
\begin{equation}\label{ut}
u_t = F(\dots, \zeta_{t-1}, \zeta_t), \qquad t \in \Z
\end{equation}
in i.i.d. r.v.'s $\{\zeta_t, t \in \Z \} $. The weak dependence condition
on $\{u_t\} $  (\ref{ut}), analogous to (\ref{DedMer}) and
guaranteeing the weak convergence of normalized partial sums of $\{Y_t \}$ towards a fractional Brownian motion,
is written in terms of projections $P_0 u_t = (\E_{[0,t]} - \E_{[1,t]})u_t $:
\begin{eqnarray}\label{Wu}
\Omega(q) \ := \  \sum_{t=1}^{\infty} \|P_0 u_t\|_q \ < \ \infty,
\end{eqnarray}
where $\| \xi \|_q := \E^{1/q} |\xi|^q $ and $q= 2$ for $0< d < 1/2$;
see (\cite{ref17}, Thm.~2.1), also \cite{ref21}, \cite{ref20}. The above mentioned papers verify
(\ref{Wu}) for several classes of  Bernoulli shifts.  It is of interest
to verify (\ref{Wu}) for projective moving averages.
For $X_t $ of (\ref{proma0}) and
$0< d < 1/2$,
$u_t := (1-L)^d X_t
= \sum_{s \le t} \zeta_s G_{s,t}$ is a well-defined projective
moving average with coefficients
$$
G_{s,t} := \sum_{s\le   v \le t} \psi_{t-v}(d) g_{s, v}, \qquad s \le t,
$$
see Proposition \ref{propfilter}.
For concreteness, let $g_{s,t} = \psi_{t-s}(-d) Q(\E_{[s+1,t]} X_t) $ as in Theorem \ref{LM2} with
$\alpha_j = \psi_j(-d)$. We have
 $\Omega(2) = \sum_{t=1}^{\infty} \|G_{0,t}\|_2 $, where
\begin{eqnarray}
\hskip-1.5cm &&\|G_{0,t}\|^2_2
= \E\Big[ \sum_{v=0}^t
\psi_{t-v}(d) \psi_{v}(-d) Q(\E_{[1,v]} X_v) \Big]^2\
 =  \E \Big[ \sum_{v=0}^{t-1}
\psi_{t-v}(d) \psi_{v}(-d)  
Q_{v,t}\Big]^2, 
\label{Wu1}
\end{eqnarray}
where $Q_{v,t} := Q(\E_{[1,v]} X_v) - Q(\E_{[1,t]} X_t)$
and we used $ \sum_{v=0}^t
\psi_{t-v}(d) \psi_{v}(-d) = 0, \, t \ge 1$ in the last equality.
Note that  $ \psi_{t-v}(d) \psi_{v}(-d)  < 0$ have the same sign and $Q_{v,t} \approx Q(X_{v}) - Q(X_t)$ are not negligible in
(\ref{Wu1}). Therefore
we conjecture that  $\|G_{0,t}\|^2_2 = O\big(\sum_{v=0}^{t-1}
|\psi_{t-v}(d) \psi_{v}(-d)|\big)^2$  $   =  O(t^{-2(1-d)})$ and hence
$\Omega(2) = \infty $ for $0< d < 1/2$. The above argument suggests that projective
moving averages posses a different "memory mechanism" from
fractionally integrated processes in \cite{ref17}.

}
\end{remark}

\section{Acknowledgements}

This work was supported by a grant (No.\ MIP-063/2013) from the Research Council of Lithuania.
The authors also thank an anonymous referee for useful  remarks.


\end{document}